\documentclass[reqno]{amsart}

\usepackage{hyperref}
\usepackage{amsmath,amsthm,amssymb,amscd}
\usepackage{todonotes}

\usepackage{tikz}
\usetikzlibrary{angles,calc,arrows.meta}

\usepackage[
  style=numeric,
  sorting=nyt,
  giveninits=true,
  doi=false,
  url=false,
  isbn=false,
  maxnames=10,
]{biblatex}
\DeclareFieldFormat[article,incollection,inproceedings,misc]{title}{#1}
\DeclareFieldFormat[article,incollection,inproceedings,misc]{volume}{\mkbibbold{#1}}
\DeclareFieldFormat{pages}{#1}
\renewbibmacro{in:}{}
\AtEveryBibitem{\ifentrytype{book}{\clearfield{pages}}{}}
\AtEveryBibitem{\clearfield{eprintclass}}
\AtEveryBibitem{\clearlist{language}}

\newtheorem{thm}{Theorem}
\newtheorem{prop}[thm]{Proposition}
\newtheorem{cor}[thm]{Corollary}
\newtheorem{lem}[thm]{Lemma}

\theoremstyle{definition}
\newtheorem{dfn}[thm]{Definition}
\newtheorem{rem}[thm]{Remark}
\newtheorem{ex}[thm]{Example}
\newtheorem{prob}[thm]{Problem}
\newtheorem{ques}[thm]{Question}

\newtheorem{conv}[thm]{Convention}

\theoremstyle{remark}
\newtheorem*{org}{Organization}
\newtheorem*{ack}{Acknowledgements}

\numberwithin{thm}{section}
\numberwithin{equation}{section}

\title[Finsler structure of G-spaces]{Finsler structure of Busemann G-spaces}
\author[T. Fujioka]{Tadashi Fujioka}
\address[T. Fujioka]{Department of Applied Mathematics, Fukuoka University, Fukuoka 814-0180, Japan}
\email{tfujioka210@gmail.com}

\author[S. Gu]{Shijie Gu}
\address[S. Gu]{Department of Mathematics, Northeastern University, Shenyang, Liaoning, China, 110004}
\email{shijiegutop@gmail.com}

\date{\today}
\subjclass[2020]{53C23, 53C70, 53B40}
\keywords{Busemann G-spaces, Finsler manifolds, DC structure, semiconcavity, semiconvexity, Alexandrov spaces, CAT spaces, strainers}

\begin{document}

\begin{abstract}
We provide two sufficient conditions for a Busemann G-space to admit a differentiable DC atlas with a continuous Finsler metric, from the viewpoint of comparison geometry.
These results generalize previous work on G-spaces with Riemannian curvature bounds, namely the Alexandrov and CAT conditions, to the Finsler setting.
\end{abstract}

\maketitle
\tableofcontents

\section{Introduction}\label{sec:intro}

\subsection{Main results}

A \emph{G-space}, introduced by Busemann \cite{Bu55}, is a qualitative generalization of Finsler manifolds with an injectivity radius bound.
It is defined as a locally compact, complete geodesic space in which any shortest path admits a local unique extension.
For the precise definition, see Section \ref{sec:pre}.
For recent surveys on this subject (focusing on its topological aspects), we refer to \cite{BHR11, An18}.

Any sufficiently regular Finsler manifold is, indeed, a G-space (\cite[Section 15]{Bu55}, \cite{BM41}, \cite{Po90sol}).
The aim of this paper is to prove a kind of the converse statement: we provide sufficient (and as minimal as possible) conditions for a G-space to admit a Finsler structure.
Such conditions were already studied by Berestovskii and Nikolaev \cite{Be75,Ni83} (see also \cite{ABN86,BN93} and references therein), Berestovskii \cite{Be94, Be02}, and especially Pogorelov \cite{Po90rie, Po90reg, Po98}.
However, their assumptions contain Riemannian curvature bounds, two-sided curvature bounds, or the continuous differentiability of the distance function.
Our approach, motivated by recent studies of Busemann convex/concave spaces \cite{FG25, HY25}, focuses on Finsler-type one-sided curvature bounds.
See Section \ref{sec:rel} for related results.

More specifically, we prove the following two theorems.
These are Finslerian generalizations of the previous results for Alexandrov/locally CAT G-spaces in \cite{Be94,OS94,Per94}/\cite{Be02,OT99,LN19} (see Section \ref{sec:rel}).

\begin{thm}\label{thm:conc}
Let $X$ be a locally semiconcave G-space.
Then $X$ is a topological manifold with a canonical $C^{1,1/2}$ and DC atlas and a continuous Finsler metric that is compatible with the original distance.
Furthermore, the Finsler metric is locally $1/2$-H\"older continuous in the base direction.
\end{thm}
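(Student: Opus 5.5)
The plan follows the Alexandrov-space strategy of Otsu--Shioya and Perelman, adapted to the Finsler setting. Local semiconcavity should give DC control of distance functions, and the G-space property should give a strainer-free supply of coordinate functions.

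\emph{Step 1: local manifold structure.} Fix $p\in X$ and a small $r>0$ below the local extendability radius. In a G-space every geodesic through $p$ extends uniquely, and the space of directions at $p$ is the unit sphere of the tangent cone. Local semiconcavity (semiconcavity of $d(q,\cdot)$ away from $q$, in the paper's sense) should make the tangent cone a normed space $(T_pX,\|\cdot\|_p)$. The argument is that semiconcavity of distance functions, together with unique extension of geodesics, forces the first variation formula $d_q(\gamma(t))=d(q,p)-t\,\langle\xi_q,\dot\gamma\rangle+o(t)$. Here the pairing is given by a derivative of the norm, namely a support functional of the unit ball. The first variation is linear in the direction because each geodesic extends backwards; this is the analogue of Perelman's ``no boundary'' argument.

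Choose points $q_1,\dots,q_n$ at distance about $r$ from $p$ whose directions at $p$ have differentials of $d_{q_i}$ forming a basis of $T_p^*X$. Such points exist because geodesics through $p$ extend in all directions. Then $\Phi=(d_{q_1},\dots,d_{q_n})$ is a bi-Lipschitz homeomorphism near $p$. The proof is by a Lipschitz inverse function argument, using uniform lower semicontinuity of directions, which comes from semiconcavity. This also gives that $X$ is a topological manifold of constant dimension $n$, where dimension is locally constant and $X$ is connected.

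\emph{Step 2: DC and $C^{1,1/2}$ transition maps.} Distance functions are semiconcave, hence DC in any chart built from distance functions. Composing, each coordinate of one chart is a DC function of another chart's coordinates, so the transition maps are DC. For the $C^{1,1/2}$ regularity I would show that $d_q$ is differentiable at every point other than $q$. This again uses unique extension: the superdifferential of a semiconcave function is a singleton when the geodesic from $q$ extends through the point. I would then show that its differential depends $1/2$-H\"older continuously on the point. The standard estimate for semiconcave functions with $\lambda$-bounds yields $|d_xf-d_yf|\lesssim |x-y|^{1/2}$ from the comparison of supporting linear functions. So the transition maps are $C^{1,1/2}$.

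\emph{Step 3: Finsler metric.} Define $F(x,v)=\|v\|_x$ through the chart differential. Compatibility with $d$ amounts to showing that curve length equals $\int F(\gamma,\dot\gamma)\,dt$; this follows from the first variation and the bi-Lipschitz charts. The H\"older continuity of $F$ in $x$ comes from Step 2, since the unit ball at $x$ is the dual of the set of differentials of distance functions. Those differentials vary $1/2$-H\"older in $x$, so $F$ does too.

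The main obstacle is Step 2's uniform H\"older estimate on differentials of $d_q$. Unlike in the Riemannian case, there is no inner product to relate gradients to directions, so the superdifferential argument must be carried out purely with support functionals of $\|\cdot\|_x$. The estimate must also be uniform over nearby base points, which needs strict convexity of the unit balls. I would try to extract that strict convexity from local uniqueness of geodesics.
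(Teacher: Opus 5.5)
Your proposal has a genuine gap at its foundation: Step 1 assumes that tangent cones exist, are finite-dimensional normed spaces, and that the first variation of $d_q$ is a linear support functional on them. Nothing you wrote produces these facts, and they are not available a priori here.

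\emph{Tangent cones are not available a priori.} The constant $\lambda$ in Definition~\ref{dfn:conc} is invariant under rescaling the metric, so it does not improve under blow-up. Semiconcavity of $d_p^2$ controls $d_p$ along one shortest path, but it does not show that $|\gamma(t)\eta(t)|/t$ converges for two shortest paths from the same point. That convergence is exactly what Alexandrov's comparison or Busemann concavity supplies in the settings you are imitating. Finite-dimensionality is also not automatic: for general G-spaces it is Busemann's open conjecture, so your $n<\infty$ must be extracted from the curvature assumption.

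\emph{Extension does not give linearity.} Unique backward extension gives only antisymmetry of directional derivatives, $D_{-v}d_q=-D_vd_q$; this is Lemma~\ref{lem:diff}. It does not give additivity in $v$. Perelman's linearity argument needs concavity of the differential on an already constructed tangent cone, so your argument is circular at this point.

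\emph{How the paper avoids this.} It never uses tangent cones. It works with the asymmetric angles $\angle pxy$. It gets $n$ as the finite, constant strainer number by a compactness argument (Lemma~\ref{lem:strnum}). It defines $F(x,v)$ as the metric speed of the strainer curve $f^{-1}(f(x)+tv)$ (Definition~\ref{dfn:fin}). It even remarks that uniqueness of the tangential shortest path of a strainer curve is open.

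Even granting normed tangent cones, you are missing the key quantitative input, Proposition~\ref{prop:unif}. It states that $|\cos\angle pxy-\cos\tilde\angle pxy|<10\lambda|xy|/|px|$ uniformly in $p,x,y$. The proof combines semiconcavity of $d_p$ and of $d_q$, with $q$ on the extension of $px$ beyond $x$, with the inequality $|py|+|qy|\ge|pq|$. In this way extendability turns the one-sided semiconcavity bound into a two-sided second-order bound. Without it, three of your steps do not go through.

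\emph{Local injectivity of $\Phi$.} Your ``uniform lower semicontinuity of directions'' is only Lemma~\ref{lem:semiconti}, which is not enough. Local injectivity needs the fact that $\Phi(y)=\Phi(z)$ for nearby $y,z$ forces $\angle q_izy\approx\pi/2$ uniformly. This yields an extra strainer direction and contradicts maximality (Lemma~\ref{lem:bilip}, Proposition~\ref{prop:bilip}). Openness needs a separate argument as well (Proposition~\ref{prop:open}).

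\emph{The $1/2$-H\"older estimate.} There is no ``standard estimate'' giving $1/2$-H\"older differentials for a differentiable semiconcave function. For $0<\alpha<1/2$, the function $u\mapsto-|u|^{1+\alpha}$ on $\mathbb R$ is concave and differentiable, but its derivative is only $\alpha$-H\"older at $0$. In the paper the exponent $1/2$ comes from the uniform $O(t)$ error of difference quotients along strainer curves (Lemma~\ref{lem:cur4}, a consequence of Proposition~\ref{prop:unif}). This error is balanced against the trivial $|xy|/t$ bound by taking $t=|xy|^{1/2}$ (Proposition~\ref{prop:c1}, Lemma~\ref{lem:fin2}).

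\emph{The DC claim.} The inference ``distance functions are semiconcave, hence DC in any chart'' does not follow. Semiconcavity holds along shortest paths of $X$, whereas DC-ness of $d_q\circ\Phi^{-1}$ concerns straight lines in $\mathbb R^n$. Showing that $\Phi^{-1}$ is a DC map is the hard direction. The paper proves it with Perelman's special-function argument (Lemmas~\ref{lem:dc1} and~\ref{lem:dc2}), using strainer curves and their tangential shortest paths.
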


\begin{thm}\label{thm:conv}
Let $X$ be a locally semiconvex, locally differentiable G-space.
Then $X$ is a topological manifold with a canonical $C^1$ and DC atlas and a continuous Finsler metric that is compatible with the original distance.
\end{thm}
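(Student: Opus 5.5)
The plan is to follow the Perelman/Otsu--Shioya scheme for Alexandrov spaces, in the form already used in the locally CAT G-space setting (Lytchak--Nagano). Here local semiconvexity replaces the CAT comparison, and local differentiability replaces the existence of Euclidean tangent cones.

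\textbf{Step 1 (charts).} First I build charts from distance functions. At $p\in X$, since $X$ is a G-space, small geodesics through $p$ extend uniquely past $p$. By local differentiability, the space of directions $\Sigma_p$ is the unit sphere of a finite-dimensional normed space $T_pX$. The dimension is finite and constant, since $X$ is a G-space of finite topological dimension. I choose points $a_1,\dots,a_n$ near $p$ whose directions at $p$ form a basis of $T_pX$. I then set
\[
F=(d_{a_1},\dots,d_{a_n}) .
\]
Local differentiability gives the first variation formula $d_{a_i}(\gamma(t))=d_{a_i}(p)-t\,\ell_i(v)+o(t)$ along geodesics with direction $v$, where $\ell_i$ is a linear functional on $T_pX$. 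The hard point is that this expansion must be uniform near $p$. Local semiconvexity gives semiconcavity-type control of $d_{a_i}$ from the other side, so the $o(t)$ term should be locally uniform. With that, $dF$ is a continuously varying linear isomorphism, and a topological inverse function argument (degree theory, since small spheres around $p$ are topological spheres in a G-space) shows that $F$ is a bi-Lipschitz homeomorphism near $p$. Hence $X$ is a topological manifold.

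\textbf{Step 2 (regularity of the atlas).} Next I show the transition maps are $C^1$ and DC. In the semiconvex setting, the directional derivative of $d_a$ at $x$ is given by $-\langle$ direction to $a$, $\cdot\rangle$, where the pairing is via the dual norm. The direction from $x$ toward $a$ depends continuously on $x$, because geodesics are unique and vary continuously in a G-space. So each $d_{a_i}$ is $C^1$ in the charts, which makes the transition maps $C^1$.

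For the DC property I use the semiconvexity of $d_a$ along geodesics, together with the fact that $d_a$ is $C^1$ with a one-sided second-order bound. The expected route is to show $d_a+\lambda\,|F-F(p)|^2$ is convex in charts for large $\lambda$. This requires checking that chart-straight lines are quasi-geodesic and comparing convexity along the two families.

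\textbf{Step 3 (Finsler metric).} I define the Finsler metric as the norm on $T_xX\cong\mathbb R^n$ given via $dF_x$. Continuity in $x$ follows from the continuity of directions, established in Step 2. Compatibility with the distance, meaning that the length of curves equals the Finsler length, follows from the first variation formula: metric derivatives of Lipschitz curves equal the norm of the chart velocity.

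\textbf{Main obstacle.} I expect the main difficulty to be upgrading pointwise differentiability to local uniformity, that is, showing the tangent norm and the directions vary continuously in $x$. Differentiability is only assumed pointwise, so this continuity must come from semiconvexity. My first approach would be to pass to limits of rescalings at nearby points and use the upper semicontinuity forced by the semiconvex comparison, together with convergence of geodesics in G-spaces.
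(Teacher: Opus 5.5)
\textbf{There is a genuine gap: your Step 1 rests on structure that is not available, and part of what you assume is equivalent to what is being proved.}

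Nothing in the hypotheses gives a tangent cone at $p$. Still less do they give a space of directions that is the unit sphere of a normed space $T_pX$ on which the first variation $v\mapsto\ell_i(v)$ is linear. Local differentiability only says that opposite directions satisfy $\angle pxy+\angle pxz=\pi$. The paper stresses that even the tangential shortest path of a strainer curve is not known to be unique.

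More seriously, you take as given that $X$ is finite-dimensional and that small spheres are topological spheres. The first is Busemann's open conjecture. The second is essentially the manifold conjecture itself, since a small ball in a G-space is the cone over its boundary sphere via the unique radial geodesics (only the homotopy type of spheres is known, and only in finite dimension). So degree theory cannot be invoked.

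The missing idea is the strainer construction, which derives dimension and charts from semiconvexity:
\begin{itemize}
\item One defines $n$ intrinsically as the maximal number of directions at $x$ with $\angle p_ixp_j=\pi/2$ for $i<j$ (asymmetric angle). This number is finite by a compactness argument using continuity of angles (Lemma \ref{lem:strnum}).
\item Openness of $f=(d(p_1,\cdot),\dots,d(p_n,\cdot))$ follows from Lytchak's criterion with the weighted norm $\sum\epsilon^i|v_i|$ (Proposition \ref{prop:open}).
\item Injectivity comes from maximality. If $f(y_j)=f(z_j)$ with $y_j,z_j\to x$, then $y_j$ supplies an $(n+1)$-st almost orthogonal direction at $z_j$ (Lemma \ref{lem:bilip}), and this passes to the limit.
\end{itemize}
That last step is exactly where the uniform convergence of angle, Proposition \ref{prop:unif'}, is needed. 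You correctly flag this uniformity as the main obstacle but leave it as ``should be locally uniform''. The paper proves it by a compactness argument combining three ingredients: continuity of angles (from the upper semicontinuity of Lemma \ref{lem:semiconti'} plus differentiability), the adjacent-angle inequality of Lemma \ref{lem:sum'}, and the almost comparison inequality \eqref{eq:comp'}. Your proposed rescaling limits would again presuppose tangent cones.

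Steps 2 and 3 inherit the same problem. The pairing $-\langle\xi_a,\cdot\rangle$ with the direction $\xi_a$ toward $a$ via a dual norm, and ``the norm on $T_xX$ transported by $dF_x$'', both presuppose a linear tangent space identified with $\mathbb R^n$. In the paper the linear structure comes only from the chart:
\begin{itemize}
\item The Finsler norm is $\lim_{t\to0}|x,c(t)|/|t|$ along strainer curves $c(t)=f^{-1}(f(x)+tv)$.
\item Existence and continuity of this limit, and the $C^1$ property of distance functions in charts, require comparing $c(t)$ with extended shortest paths through $x$ and $c(s)$ (Lemmas \ref{lem:cur1}--\ref{lem:cur4}).
\item The triangle inequality is proved directly via $f^{-1}(f(c(t))+tw)$.
\end{itemize}

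Your DC argument also targets the wrong statement. Since $|F-F(p)|^2$ is just the Euclidean square in chart coordinates, your claim amounts to every $d_a\circ F^{-1}$ being semiconvex. That is more than is known even for Alexandrov or locally CAT G-spaces, where distance functions are only shown to be DC in strainer coordinates. Moreover, quasi-geodesicity of chart lines is first-order information and cannot bound second derivatives. What is needed is Perelman's special-function trick. The function $g=\frac1n\sum d(q_i,\cdot)$, built from the opposite strainer, has $g\circ f^{-1}$ semiconvex; this uses the openness of $f$ and Lemma \ref{lem:cur3}. Then for any semiconvex $h$ one writes $h\circ f^{-1}=(h+Lg)\circ f^{-1}-Lg\circ f^{-1}$ with $L$ large.
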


Let us briefly explain the meaning of the assumptions and conclusions.
We say that a geodesic space $X$ is \emph{locally semiconcave}/\emph{semiconvex} if every point $x\in X$ has a neighborhood $U$ such that for any $p\in U$ and any unit-speed shortest path $\gamma(t)$ in $U$, the squared distance function
\[t\mapsto d(p,\gamma(t))^2\]
is uniformly semiconcave/semiconvex (independent of $p$ and $\gamma$).
We also say that $X$ is \emph{locally differentiable} if the above squared distance function is differentiable.
See Section \ref{sec:pre} for more details.

An atlas of a topological manifold is said to be \emph{$C^{1,\alpha}$}/\emph{DC}, where $0\le\alpha\le1$ and $C^{1,0}=C^1$, if the coordinate transformations preserve the class of $C^{1,\alpha}$/DC functions (the latter class consists of functions represented as differences of semiconcave/semiconvex functions).
For a geodesic space $X$, we say that a $C^{1,\alpha}$/DC atlas on $X$ is \emph{canonical} if any squared distance function as above is $C^{1,\alpha}$/DC in local coordinates.
For a $C^1$ manifold $X$, a \emph{Finsler metric} is a function
\[F:TX\to\mathbb R\]
defined on the tangent bundle of $X$ that is a norm with respect to the fiber variable.
The conclusions of the main theorems further require that $F$ is continuous on $TX$ and induces the original distance of $X$, and that it is locally $1/2$-H\"older continuous with respect to the base variable in the case of Theorem \ref{thm:conc}.
See Sections \ref{sec:dc} and \ref{sec:fin} for more details.

The local semiconcavity and local semiconvexity are weak forms of lower and upper curvature bounds, respectively, in the Finsler setting.
Indeed, for smooth Finsler manifolds, these conditions are described by the flag and tangent curvatures, as well as the uniform smoothness and convexity constants; see \cite{Oh09uni, Oh21, Sh01}.

From a technical point of view, our proofs of Theorems \ref{thm:conc} and \ref{thm:conv} are not particularly new and are similar to previous ones in various settings: we construct a coordinate chart consisting of distance functions (i.e., strainer coordinates) and define a Finsler metric by pulling back the derivative of the distance function.
However, the purpose of this paper is not to provide new techniques, but rather to clarify, from the standpoint of comparison Finsler geometry, what conditions are really necessary for the existing argument (partly for the sake of future research in \cite{Fgeo}).
The following two remarks illustrate this perspective well.

\begin{rem}\label{rem:main}
The local semiconcavity in Theorem \ref{thm:conc} (together with the G-space assumption) implies local differentiability, see Lemma \ref{lem:diff}.
On the other hand, the local semiconvexity in Theorem \ref{thm:conv} alone (even together with the G-space assumption) does not imply local differentiability; a counterexample is a strictly convex and non-differentiable norm on Euclidean space.
This is a typical difference between lower and upper curvature bounds in the Finsler setting, stemming from the one-sided nature of the triangle inequality (Remark \ref{rem:diff2}).
We remark that the importance of such differentiability was already suggested in the original works of Busemann and Phadke, see \cite[Chapter III]{Bu55} and \cite{BP79} (however, the meaning of differentiability is somewhat different).
\end{rem}

\begin{rem}\label{rem:os}
As mentioned above, Theorems \ref{thm:conc} and \ref{thm:conv} generalize the corresponding results for Alexandrov and locally CAT G-spaces, respectively (see Section \ref{sec:rel}).
In the Alexandrov case, the $1/2$-H\"older continuity of the Riemannian structure is due to Otsu--Shioya \cite[Lemma 3.2(2)]{OS94}.
The proof of Theorem \ref{thm:conc}, which does not rely on the Riemannian aspects of a lower curvature bound, provides a slightly streamlined explanation of this fact (see Proposition \ref{prop:c1} and Lemma \ref{lem:fin2}).
On the other hand, regarding Theorem \ref{thm:conv}, the $1/2$-H\"older continuity is not known even for locally CAT G-spaces (cf.\ \cite[Remark 1.2]{Be02}).
\end{rem}

In the general semiconvex case without differentiability, for now we only prove the following (see also Question \ref{ques:conv}).

\begin{thm}\label{thm:top}
Any locally semiconvex G-space is a topological manifold.
\end{thm}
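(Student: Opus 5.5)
The plan is to build strainer-type coordinates from distance functions and show they are local homeomorphisms onto open subsets of $\mathbb R^n$. Differentiability is not available here, so we cannot differentiate $d(p,\cdot)^2$ along directions. Instead we use only two ingredients: the local semiconvexity, and the G-space property that shortest paths extend locally and uniquely.

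The first step establishes local uniqueness and continuity of shortest paths. Fix $x\in X$ and a small convex-type neighborhood $U$ on which $t\mapsto d(p,\gamma(t))^2$ is $\lambda$-semiconvex for all $p\in U$ and unit-speed shortest paths $\gamma$ in $U$. Semiconvexity of $d(p,\gamma(t))^2$, applied to the midpoint of two shortest paths from $p$ with common endpoint, shows that shortest paths between points of a small ball are unique and depend continuously on their endpoints. This is the standard Busemann-convexity argument run with a $\lambda$-error. In particular, small balls $B(x,r)$ are contractible via geodesic contraction to $x$.

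The second step shows that the boundary of the punctured neighborhood behaves like a sphere, and here we use the G-space extension. Because geodesics from $x$ extend uniquely to length $r$, the map $S(x,r)\times(0,r]\to B(x,r)\setminus\{x\}$, $(\xi,t)\mapsto\gamma_\xi(t)$, is a continuous bijection; it is a homeomorphism by compactness. Here $\gamma_\xi$ is the unique shortest path from $x$ through $\xi$, and the small spheres $S(x,t)$ are all homeomorphic to $S(x,r)$ via radial projection. So $B(x,r)$ is the open cone over $\Sigma=S(x,r)$, with continuity coming from the first step. A G-space is known (Busemann) to be finite-dimensional and locally homogeneous in the sense of homology manifolds, given local contractibility; by the first step the space is locally contractible. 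Hence $X$ is an ANR homology $n$-manifold, and each $\Sigma$ is a homology $(n-1)$-sphere that is also a cone-link at every point.

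The final step upgrades to a topological manifold. By induction on dimension, use that the link $\Sigma$ at each point $y\in\Sigma$ has a neighborhood which is again a cone. This follows from the cone structure at points of $X$ near $\Sigma$ together with extendability, giving the product $\Sigma\times(0,r)$. For $n\le 4$ one can then invoke the Busemann–Krakus–Thurston results that G-spaces of dimension $\le4$ are manifolds. In general, I would instead aim to show that $\Sigma$ is itself locally a G-space-like cone, so that induction gives that $\Sigma$ is a manifold. Then $\Sigma$ is a simply connected homology sphere (the cone structure with semiconvexity gives simple connectivity of spheres for $n\ge3$) and hence a sphere by the Poincaré conjecture, and $B(x,r)$ is an open ball. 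The main obstacle is exactly this inductive step: proving that the link $\Sigma$ inherits enough structure, or alternatively finding $n$ distance functions $d(p_i,\cdot)$ forming an injective open map. For the latter, I would first try choosing $p_i$ as extensions of geodesics through $x$ in "independent" directions, with injectivity derived from semiconvexity along the geodesic joining two points with equal coordinates.
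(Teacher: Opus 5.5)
There is a genuine gap, and it sits exactly at what you call the ``main obstacle''. Everything you extract in your first two steps holds for \emph{every} G-space and uses no semiconvexity. This includes local uniqueness of shortest paths, the cone structure of $B(x,r)$ over $S(x,r)$, local contractibility, and, given finite dimension, the ANR homology manifold property. Moreover, finite-dimensionality is \emph{not} known for G-spaces; it is Busemann's conjecture. In the semiconvex setting it has to be proved, as the paper does in Lemma \ref{lem:dim} by embedding a normal ball almost isometrically via distance functions to a fine net. Granting it, you are left with a finite-dimensional G-space. For such spaces the manifold question is open in dimensions $\ge5$; it would follow from the Bing--Borsuk conjecture. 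So semiconvexity must enter the last step in an essential, quantitative way, and your induction supplies no such mechanism. There is no reason for the link $\Sigma=S(x,r)$ to be a G-space or to inherit semiconvexity, so there is nothing to apply an inductive hypothesis to. The Poincar\'e conjecture can only be used once $\Sigma$ is already known to be a manifold, which is the whole difficulty.

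Your fallback, an injective open map built from distance functions, is the paper's route, but the ideas that make it work are missing. In particular, injectivity near a prescribed point is not available. Without differentiability the angle is only upper semicontinuous and there is no uniform convergence of angle. Hence the analogue of Lemma \ref{lem:bilip} for arbitrary nearby pairs $y,z$ fails; think of a strictly convex, non-differentiable norm. Semiconvexity along $yz$ with $f(y)=f(z)$ only bounds the angles at $y$ and $z$ from above, not the almost orthogonality you would need.

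The paper instead proceeds as follows.
\begin{itemize}
\item Strainers require both $\angle p_ixp_j$ and $\angle p_ixq_j$ to be almost $\pi/2$, where $q_j$ lies on the extension of $p_jx$. By Lemmas \ref{lem:semiconti'} and \ref{lem:sum'} this is an open condition (Lemma \ref{lem:stropen'}), and it makes the strainer map open (Proposition \ref{prop:open'}).
\item Finite Hausdorff dimension then yields an $(n,\delta)$-strained point $x$ such that no point of the ball is $(n+1,\delta)$-strained.
\item Lemma \ref{lem:bilip'}, which needs the base point fixed, gives only the pointwise bound \eqref{eq:liminf} at each $y$ near $x$.
\item Lytchak's Baire category argument upgrades this to an open bi-Lipschitz embedding on some open set, so one obtains a single manifold point.
\item Topological homogeneity of G-spaces propagates this to all of $X$.
\end{itemize}
The last two ingredients are what replace the everywhere-injectivity you were hoping to prove.
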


This generalizes the previous result of Andreev \cite{An14} (cf.\ \cite{An17nor}) for locally Busemann convex G-spaces; see Section \ref{sec:rel}.
Recently, the authors \cite[Theorem 1.6]{FG25} obtained an alternative proof of Andreev's result.
The above theorem can be regarded as a generalization of this alternative proof.

\subsection{Related results and conjectures}\label{sec:rel}

We now recall several related results and conjectures.
This subsection mentions numerous conditions that will not be used in the rest of the paper, so readers unfamiliar with them may skip it.

\subsubsection*{Alexandrov/CAT spaces}
An \emph{Alexandrov space} and a \emph{CAT space} are, respectively, geodesic spaces with lower and upper sectional curvature bounds in the sense of triangle comparison.
More precisely, every geodesic triangle in an Alexandrov/CAT space is not thinner/thicker than the geodesic triangle with the same side-lengths in the model plane of constant curvature, which represents a lower/upper curvature bound of the space.
Standard references are \cite{BBI01, AKP24, BH99, AKP19}.

The Alexandrov condition implies local semiconcavity as in Theorem \ref{thm:conc}, and the local CAT condition implies local semiconvexity as in Theorem \ref{thm:conv} (note that the Alexandrov condition satisfies the local-to-global property, while the CAT condition does not, so we distinguish local and global only for the latter).
The local CAT condition, together with the G-space assumption, also implies local differentiability (see Lemma \ref{lem:cat}).
Therefore, Theorem \ref{thm:conc} covers Alexandrov G-spaces and Theorem \ref{thm:conv} covers locally CAT G-spaces.

However, the Alexandrov/local CAT condition excludes Finsler manifolds, unlike local semiconcavity/semiconvexity.
That is, any Finsler manifold satisfying the Alexandrov/local CAT condition is necessarily Riemannian.
From our perspective that G-spaces are a generalization of Finsler manifolds, this is not desirable.

The structures of Alexandrov G-spaces and locally CAT G-spaces were studied by Berestovskii \cite{Be94, Be02}.
It turns out that any Alexandrov (resp.\ locally CAT) G-space is a canonical $C^{1,1/2}$ (resp\ $C^1$) manifold with a locally $1/2$-H\"older continuous (resp.\ continuous) Riemannian metric that is compatible with the original distance.
Here the improved regularity in the Alexandrov case follows from the result of Otsu--Shioya \cite[Lemma 3.2(2)]{OS94} mentioned in Remark \ref{rem:os}.
Our results generalize Berestovskii's ones to the Finsler setting.

These results of Berestovskii can be viewed as special cases of the  general theory of singular spaces with a lower or upper curvature bound, i.e., Alexandrov spaces and GCBA spaces.
Here a \emph{GCBA space} is a separable, locally compact, locally geodesically complete, locally CAT space.
Note that an Alexandrov G-space is nothing but a (finite-dimensional) Alexandrov space with extendable geodesics and that a locally CAT G-space is nothing but a (complete) GCBA space without branching geodesics. 

The general theory tells us that a regular part of an Alexandrov/GCBA space, in some suitable sense, admits a canonical $C^{1,1/2}$/$C^1$ and DC atlas with a locally $1/2$-H\"older continuous/continuous Riemannian metric of locally bounded variation.
For Alexandrov spaces, see Burago--Gromov--Perelman \cite{BGP92}, Otsu--Shioya \cite{OS94}, and Perelman \cite{Per94}.
For GCBA spaces, see Lytchak--Nagano \cite{LN19} (cf.\ Otsu--Tanoue \cite{OT99} and Otsu \cite{Ot97}).
Restricting these results to the special case of G-spaces, one can recover and even improve Berestovskii's results.

\subsubsection*{Busemann convex/concave spaces}

The \emph{Busemann convexity} and \emph{Busemann concavity} are further Finslerian generalizations of the CAT($0$) condition and nonnegative Alexandrov curvature, respectively, distinct from the semiconvexity and semiconcavity.
More precisely, a geodesic space $X$ satisfies the Busemann convexity/concavity if for every pair of constant-speed shortest paths $\gamma,\eta:[0,1]\to X$ emanating from the same point, the function
\[t\mapsto d(\gamma(t),\eta(t))/t\]
is nondecreasing/nonincreasing in $t$.
For recent references, see \cite{Jo97, Pa14, FG25, Ke19, HY25}.
The Busemann convexity is also known as the \emph{Busemann nonpositive curvature}, abbreviated as \emph{BNPC}.
These conditions do not exclude Finsler manifolds, unlike the CAT and Alexandrov conditions (see \cite{KVK04, KK06, IL19} for the Busemann convexity and \cite[Section 2.1, Remark]{Ke19} for the Busemann concavity).

In fact, the local Busemann convexity implies the local (semi)convexity, thanks to the triangle inequality.
On the other hand, the local Busemann concavity does not necessarily imply the local semiconcavity; consider a strictly convex, non-differentiable normed space as in Remark \ref{rem:main}.
This is another typical difference between upper and lower curvature bounds in the Finsler setting.
Note that this example also shows that the local Busemann convexity does not imply local differentiability, even if it is a G-space.

Andreev \cite{An14, An17nor} studied the structure of locally BNPC G-spaces.
He proved that such a G-space is a topological manifold and that every tangent cone is isometric to a finite-dimensional strictly convex normed space.
Based on these results, Andreev introduced the notion of a \emph{singular Finsler space} in \cite{An17fin}.
Theorem \ref{thm:top} generalizes his first result \cite{An14}.
Theorem \ref{thm:conv} is similar to his second result \cite{An17nor}, but differs in several ways: regarding the assumptions, our theorem additionally assumes differentiability but instead weakens the Busemann convexity to semiconvexity; regarding the conclusions, Andreev's theorem describes a Finsler structure in terms of the tangent cone, whereas ours deals with a Finsler structure with respect to local coordinates and further shows its continuity.

Recently, the first author and Kenshiro Tashiro \cite{FT26} also studied the structure of BNPC spaces equipped with measures satisfying the measure contraction property (MCP).
It turns out that any locally BNPC space satisfying the local MCP for the Hausdorff measure is a topological manifold with boundary, and its interior is a G-space (without completeness); see \cite[Theorem 1.5]{FT26} and its proof.
Theorem \ref{thm:conv} shows that one can construct a Finsler metric on the interior, at least under the additional assumption of differentiability.

In view of the previous part on Alexandrov/CAT spaces, it is natural to expect that Theorems \ref{thm:conc}, \ref{thm:conv}, and \ref{thm:top}, as well as Andreev's results, are special cases of more general theories of singular spaces satisfying Finsler-type curvature bounds.
In what follows, we recall such studies in \cite{FG25,HY25}.

As a generalization of a GCBA space (of nonpositive curvature), the authors \cite{FG25} introduced the notion of a \emph{GNPC space}, that is, a separable, locally compact, locally geodesically complete, locally BNPC space.
In \cite{FG25} the authors studied the topological structure of GNPC spaces, extending the results of Lytchak--Nagano \cite{LN19, LN22} and Lytchak--Nagano--Stadler \cite{LNS24} for GCBA spaces.
It is likely that most of the geometric part of the Lytchak--Nagano theory can also be extended to GNPC spaces, possibly with appropriate differentiability assumptions as in Theorem \ref{thm:conv}.
In fact, one of the motivations for the present paper is to conduct an experimental research in this direction.
The general theory will be discussed in a forthcoming paper by the first author \cite{Fgeo}.

On the other hand, Han--Yin \cite{HY25} studied the structure of Busemann concave space satisfying (global) semiconcavity and local semiconvexity, and obtained parallel results to the Burago--Gromov--Perelman theory \cite{BGP92}.
Note that any Alexandrov space of nonnegative curvature satisfies the Busemann concavity and global semiconcavity (but may fail local semiconvexity).

The relationships discussed above are summarized in Figures \ref{fig:alex} and \ref{fig:cat}.
Note that the horizontal inclusions are only valid for the case of nonnegative/nonpositive curvature.
Our main theorems deal with the ones on the upper right in both diagrams.

\begin{figure}[ht]
\[
\begin{array}{ccc}
 & & \text{locally semiconcave G-spaces}\\
 & \rotatebox[origin=c]{45}{$\subset$} & \rotatebox[origin=c]{90}{$\subset$} \\
\text{Alexandrov G-spaces} & \underset{\text{nonnegative case}}{\subset} & \begin{array}{c}\text{Busemann concave,}\\\text{(globally) semiconcave G-spaces}\end{array} \\
\rotatebox[origin=c]{-90}{$\subset$} & & \rotatebox[origin=c]{-90}{$\subset$} \\
\text{Alexandrov spaces} & \underset{\text{nonnegative case}}{\subset} & \begin{array}{c}\text{Busemann concave,}\\\text{(globally) semiconcave spaces}\end{array}
\end{array}
\]
\caption{}\label{fig:alex}
\end{figure}

\begin{figure}[ht]
\[
\begin{array}{ccc}
 & & \begin{array}{c}\text{locally semiconvex,}\\\text{(locally differentiable) G-spaces}\end{array}\\
 & \rotatebox[origin=c]{45}{$\subset$} & \rotatebox[origin=c]{90}{$\subset$} \\
\text{locally CAT G-spaces} & \underset{\text{nonpositive case}}{\subset} & \begin{array}{c}\text{locally Busemann convex,}\\\text{(locally differentiable) G-spaces}\end{array}\\
\rotatebox[origin=c]{-90}{$\subset$} & & \rotatebox[origin=c]{-90}{$\subset$} \\
\text{GCBA spaces} & \underset{\text{nonpositive case}}{\subset} & \text{GNPC spaces}
\end{array}
\]
\caption{}\label{fig:cat}
\end{figure}

It is worth highlighting that Theorems \ref{thm:conc}, \ref{thm:conv}, and \ref{thm:top} do not require the Busemann concavity/convexity.
This indicates that several aspects of the boundedness of curvature, which are consistent in the Riemannian setting, appear differently in the Finslerian framework, and only some of them are necessary for our arguments.
In a sense, the other aspects can be considered to be already present in the properties of G-spaces in a qualitative form; for example, the non-branching/uniqueness property of shortest paths in a G-space can be regarded as a very weak form of the Busemann concavity/convexity.

\subsubsection*{Pogorelov's work}

Pogorelov \cite{Po90reg, Po98} gave (non-Riemannian) sufficient conditions for a G-space to be a Finsler manifold (see also \cite{Po90rie} for a Riemannian condition).
Our results are closely related to these two works.
In \cite{Po90reg}, he proved that any G-space satisfying both local semiconcavity and local semiconvexity is a Finsler manifold (of Lipschitz regularity).
Theorems \ref{thm:conc} and \ref{thm:conv} partially generalize this result to one-sided curvature bounds (it is obvious that local semiconcavity plus local semiconvexity implies local differentiability, see Lemmas \ref{lem:sum}, \ref{lem:sum'} and Definition \ref{dfn:diff'}). 
In \cite[Chapter 3]{Po98}, he proved a similar result under a weaker assumption, called \emph{Axiom A}, on the continuous differentiability of the distance function.
We verify that the G-spaces of Theorems \ref{thm:conc} and \ref{thm:conv} satisfy Axiom A or a slightly weaker version of it; see Remarks \ref{rem:pog1} and \ref{rem:pog2}.
However, since the conclusions of our theorems contain more than his theorem, and furthermore, his proof is a little bit sketchy (see, for example, Remark \ref{rem:pog3}), we will provide a complete argument including the construction of a Finsler metric.
This clarification is in fact another small motivation for this paper (partly for future reference in \cite{Fgeo}).

\subsubsection*{Busemann's conjecture}

Busemann \cite{Bu55} originally conjectured that any G-space is finite-dimensional (in the sense of the topological dimension) and that any finite-dimensional G-space is a topological manifold.
These conjectures have remained open in their full generality.
As above, it is not very difficult to verify both conjectures under quantitative assumptions such as curvature bounds (i.e., restrictions on the second-order derivative of the distance function) or continuous first-order differentiability (see also \cite[Section 5]{Bu70}): the essential difficulty of these problems lies in their qualitative nature.

The finite-dimensionality conjecture was proved under a qualitative assumption of the convexity of metric balls, see \cite{Be77, BHR11}.
The manifold conjecture was known to be true in dimensions $\le 4$, \cite{Bu55, Krak68, Th96}, and remains open in dimension $\ge 5$.

In general, it is only known that any G-space is \emph{topologically homogeneous}, i.e.,  any point can be moved to any other point by a self-homeomorphism of the space (\cite{Th96, BHR11}).
Under the finite-dimensionality assumption, a little more is known: any finite-dimensional G-space is an ANR homology manifold (\cite{Th96, BHR11}) and every small metric sphere has the homotopy type of a sphere (\cite{Gu19}).
In particular, the manifold conjecture follows from the solution of a more general topological problem called the \emph{Bing--Borsuk conjecture}, see \cite{HR08}.

Although the general problems are purely topological, to attack them, one must face the Finsler character of G-spaces.
The proofs of the main theorems (especially Theorem \ref{thm:top}) might shed some light on the potential difficulties arising from those geometric aspects of G-spaces.
See Problems \ref{prob:ball} and \ref{prob:ball'}.

\begin{org}
In Section \ref{sec:pre}, we define the basic terminology used in this paper.
In Section \ref{sec:ang}, we introduce the notion of angle, which plays a fundamental role in the proofs of the main theorems.
In Section \ref{sec:conc}, we prove Theorem \ref{thm:conc}.
In Section \ref{sec:conv}, we prove Theorem \ref{thm:conv} in a parallel manner.
In Section \ref{sec:top}, modifying the previous argument, we prove Theorem \ref{thm:top}.
Finally, in Section \ref{sec:prob}, we discuss open problems.
\end{org}

\begin{ack}
The authors would like to thank Liming Yin and Bang-Xian Han for fruitful discussions.
The first author is also grateful to Kenshiro Tashiro, Shouhei Honda, and Shin-ichi Ohta for helpful comments.
The first author was supported by JSPS KAKENHI Grant Number 25K23336.
The second author was supported by NSFC grant 12201102.
\end{ack}

\section{Preliminaries}\label{sec:pre}

In this section, we fix the basic terminology used in this paper, i.e., G-spaces, local semiconcavity/semiconvexity, local differentiability, and normal balls.

Throughout the paper, the distance between $x$ and $y$ is denoted by $d(x,y)$ or $|xy|$.
The open and closed balls around $p$ of radius $r$ are denoted by $B(p,r)$ and $\bar B(p,r)$, respectively.

A \emph{shortest path} in a metric space is an isometric embedding from an interval.
In particular, any shortest path has unit-speed.
We denote by $xy$ a shortest path from $x$ to $y$.
A \emph{geodesic space} is a metric space such that every two points are joined by at least one shortest path.

Unless otherwise stated, we will use the standard Euclidean norm for $\mathbb R^n$.

\subsection{G-spaces}\label{sec:g}

We begin by recalling the original axiomatic definition of a G-space by Busemann \cite{Bu55}.

\begin{dfn}
A metric space $X$ is called a \emph{G-space} if it satisfies the following four axioms:
\begin{enumerate}
    \item For any distinct points $x,y\in X$, there exists $z\in X\setminus\{x,y\}$ such that $|xz|+|zy|=|xy|$.
    \item Every bounded infinite set in $X$ has an accumulation point.
    \item Every point of $X$ has an open neighborhood $U$ such that for any $x,y\in U$, there exists $z\in U\setminus\{x,y\}$ such that $|xy|+|yz|=|xz|$.
    \item For any distinct points $x,y\in X$, if $z_1,z_2\in X$ satisfy $|xy|+|yz_i|=|xz_i|$ and $|yz_1| =|yz_2|$, then $z_1=z_2$.
\end{enumerate}
\end{dfn}

Conditions (1) and (2) imply that $X$ is a complete, locally compact geodesic space.
Moreover, Conditions (3) and (4) imply that every shortest path is locally uniquely extendable.
Therefore the above definition is equivalent to the following non-axiomatic definition.

\begin{dfn}\label{dfn:g}
A complete, locally compact geodesic space $X$ is a \emph{G-space} if every point has an open neighborhood $U$ such that any shortest path $\gamma$ in $U$ admits a unique maximal extension reaching $\partial U$.
\end{dfn}

Note that the above unique extension property implies the local uniqueness of shortest paths: any two points in a sufficiently small neighborhood in a G-space is joined by a unique shortest path.

Every G-space is \emph{topologically homogeneous}, i.e.,  any point can be moved to any other point by a self-homeomorphism of the space (\cite{Th96, BHR11}).
Moreover, every finite-dimensional G-space is an ANR homology manifold (\cite{Th96, BHR11}) such that any small metric sphere has the homotopy type of a sphere (\cite{Gu19}).
However, in general, nothing more is known about the topological structure of G-spaces.

\subsection{Semiconcavity/semiconvexity}

Next we introduce the local semiconcavity/semiconvexity of a geodesic space.
Note that we do not consider a G-space here.

\begin{dfn}\label{dfn:conc}
We say that a geodesic space $X$ is \emph{locally semiconcave} if for every $x\in X$, there exist $\lambda\in\mathbb R$ and a neighborhood $U$ of $x$ such that for any $p\in U$ and any (unit-speed) shortest path $\gamma(t)$ in $U$, the function
\[t\mapsto|p\gamma(t)|^2-\lambda t^2\]
is concave.
\end{dfn}

\begin{dfn}\label{dfn:conv}
We say that a geodesic space $X$ is \emph{locally semiconvex} if for every $x\in X$, there exist $\lambda\in\mathbb R$ and a neighborhood $U$ of $x$ such that for any $p\in U$ and any (unit-speed) shortest path $\gamma(t)$ in $U$, the function
\[t\mapsto|p\gamma(t)|^2+\lambda t^2\]
is convex.
\end{dfn}

In other words, the second derivative of $|p\gamma(t)|^2$ is locally bounded above/below in a generalized sense.
We call the number $\lambda$ the \emph{local semiconcavity/semiconvexity constant} of $X$ on $U$, which is usually assumed to be positive.
In fact, the precise value of $\lambda$ does not play any particular role in this paper.
If $U=X$, we call $X$ \emph{(globally) semiconcave/semiconvex}.

\begin{ex}\label{ex:concconv}
\leavevmode
\begin{itemize}
\item Any Alexandrov space is locally semiconcave.
In particular, any Alexandrov space of nonnegative curvature is globally semiconcave with $\lambda=1$.
\item Any locally CAT space is locally semiconvex (in fact, \emph{strongly} convex in the sense that $\lambda<0$).
In particular, any CAT($0$) space is globally semiconvex with $\lambda=-1$.
\item Any locally Busemann space is locally semiconvex with $\lambda=0$ (see also Remark \ref{rem:sq} below).
\item For Finsler manifolds, the local semiconcavity/semiconvexity constants are described by the flag and tangent curvatures, and the uniform smoothness/convexity constants, see \cite{Oh09uni, Oh21, Sh01}.
\end{itemize}
We refer the reader to \cite{BBI01, AKP24} for Alexandrov spaces, \cite{BH99, AKP19} for CAT spaces, and \cite{Jo97, Pa14} for Busemann spaces.
\end{ex}

\begin{rem}\label{rem:sq}
The local semiconcavity/semiconvexity of the squared distance function $d(p,\cdot)^2$ implies the local semiconcavity/semiconvexity of the non-squared distance function $d(p,\cdot)$ except at $p$, where the latter semiconcavity/semiconvexity constant depends additionally on the distance from $p$.
In fact, for any smooth functions $\varphi,f:\mathbb R\to\mathbb R$, we have
\[(\varphi\circ f)''=\varphi''\circ f\cdot(f')^2+\varphi'\circ f\cdot f''.\]
An argument based on this formula shows that the semiconcavity/semiconvexity of any positive Lipschitz function $f$ is equivalent to the semiconcavity/semiconvexity of its square $f^2$, after an appropriate change of the semiconcavity/semiconvexity constants (which may diverge as $f\to 0,\infty$).
This elementary fact will be used implicitly and frequently throughout the paper.
\end{rem}

\subsection{Differentiability}

We also introduce the local differentiability of a geodesic space.

\begin{dfn}\label{dfn:diff}
We say that a geodesic space $X$ is \emph{locally differentiable} if every point has a neighborhood $U$ such that for any $p\in U$ and any shortest path $\gamma(t)$ in $U$, the function
\[t\mapsto|p\gamma(t)|^2\]
is differentiable.
\end{dfn}

Clearly, it is equivalent to the differentiability of the function $t\mapsto|p\gamma(t)|$ as long as $p\neq\gamma(t)$.
Note that we do not assume the continuity of the derivative.

\subsection{Normal balls}

Finally, we define normal balls for the above spaces.

\begin{dfn}\label{dfn:nor}
Let $X$ be a G-space (resp.\ locally semiconcave/semiconvex space, locally differentiable space).
We say that an open metric ball
\[B\subset X\]
of radius $r>0$ is a \emph{normal ball} if the concentric closed ball $10\bar B$ with radius $10r$ is contained in a neighborhood $U$ of Definition \ref{dfn:g} (resp.\ \ref{dfn:conc}, \ref{dfn:conv}, \ref{dfn:diff}).
\end{dfn}

By convention, if $X$ satisfies two or more conditions, we will consider a normal ball for all those conditions simultaneously: that is, we assume $10\bar B\subset\bigcap_{\alpha}U_\alpha$ for open neighborhoods $U_\alpha$ from the definitions of those conditions.
All arguments in the rest of the paper are carried out in such normal balls.

\section{Angles}\label{sec:ang}

In this section, we introduce angles in locally semiconcave/semiconvex spaces (not necessarily G-spaces) and discuss their basic properties in a parallel manner.
These angles are defined by the first variation formula exactly the same way as in the study of Busemann convex/concave spaces in \cite{FG25,HY25}.

For simplicity, we assume local uniqueness of geodesics, but this is not essential.
We say that a geodesic space $X$ is \emph{locally uniquely geodesic} if every point has a neighborhood $U$ such that every $x,y\in U$ is joined by a unique shortest path in $X$.
A \emph{normal ball} for a locally uniquely geodesic space is defined in the same way as in Definition \ref{dfn:nor}.

For three points $p,x,y$ in a metric space such that $p\neq x\neq y$ (possibly $p=y$), consider the comparison triangle in Euclidean plane with side-lengths $|px|$, $|py|$, and $|xy|$.
We denote by
\[\tilde\angle pxy\]
the angle opposite to the side $|py|$ in this comparison triangle.

\begin{dfn}\label{dfn:ang}
Let $X$ be a locally uniquely geodesic space and $B$ a normal ball.
For any $p,x,y\in B$ such that $p\neq x\neq y$, we define the \emph{angle} $\angle pxy$ by
\[\angle pxy=\lim_{t\to0}\tilde\angle px\gamma(t)\]
if the limit exists, where $\gamma(t)$ is the unique shortest path from $x$ to $y$ parameterized by arc-length.
\end{dfn}

By the law of cosines in Euclidean plane, we have
\begin{align*}
\cos\tilde\angle px\gamma(t)=\frac{(|px|+|p\gamma(t)|)(|px|-|p\gamma(t)|)}{2|px|t}+\frac{t}{2|px|}
\end{align*}
Therefore, Definition \ref{dfn:ang} is equivalent to the validity of the first variation formula:
\begin{equation}\label{eq:fv}
\lim_{t\to0}\frac{|p\gamma(t)|-|px|}{t}=-\cos\angle pxy.
\end{equation}

Note that $\angle pxy\neq\angle yxp$ in general, even if both angles exist (consider a normed plane).
In \cite{FG25,HY25}, this type of angle was called an \emph{angle viewed from a fixed point}, in order to distinguish it from the other type of angle.
However, since we will not use the other angle in this paper, we will not use this long name.

Throughout the paper, we use the following convention.

\begin{conv}
Whenever we consider the angle $\angle pxy$, we assume that $p\neq x\neq y$ as in Definition \ref{dfn:ang} without further mention.
This condition is often omitted.
\end{conv}

\subsection{Semiconcave case}

We first discuss angles in locally semiconcave spaces.
As explained above, we only consider the locally uniquely geodesic case, which is enough for our purpose for studying G-spaces.

Here we list three basic properties of angle that follow from the local semiconcavity.
The detailed proofs can be found in \cite{FG25,HY25}.
For the convenience of the reader (and for later use), we include outlines of the proofs.

\begin{lem}\label{lem:ang}
Let $X$ be a locally semiconcave, locally uniquely geodesic space and $B$ a normal ball.
Then for any $p,x,y\in B$, the angle $\angle pxy$ in Definition \ref{dfn:ang} is well-defined and satisfies the following almost comparison inequality:
\begin{equation}\label{eq:comp}
\tilde\angle px\gamma(t)\le\angle pxy+\delta(t/|px|),
\end{equation}
where $\delta(s)$ is a positive function  such that $\delta(s)\to0$ as $s\to0$, which depends only on the local semiconcavity constant of $X$ on $B$.
\end{lem}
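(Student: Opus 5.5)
Set $f(t)=|p\gamma(t)|$ for $t\in[0,|xy|]$, so $f(0)=|px|>0$. By the law of cosines formula displayed after Definition \ref{dfn:ang},
\[\cos\tilde\angle px\gamma(t)=\frac{f(0)^2-f(t)^2}{2f(0)t}+\frac{t}{2f(0)}.\]
Hence the whole statement is a claim about the difference quotient $(f(t)^2-f(0)^2)/t$. The plan is to use semiconcavity of $f^2$ to show that this quotient is monotone up to a controlled error. That gives existence of the limit, and the inequality \eqref{eq:comp} comes out directly with an explicit $\delta$.

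\textbf{Step 1: monotonicity.} Since $B$ is normal, $\gamma$ and $p$ lie in the neighborhood $U$ of Definition \ref{dfn:conc}. So $g(t)=f(t)^2-\lambda t^2$ is concave on $[0,|xy|]$. For a concave function, the chord slope $(g(t)-g(0))/t$ is nonincreasing in $t$. Therefore
\[q(t):=\frac{f(t)^2-f(0)^2}{t}=\frac{g(t)-g(0)}{t}+\lambda t\]
satisfies $q(t)-\lambda t$ nonincreasing. Also $|q(t)|\le 2f(0)+t$, since $f$ is $1$-Lipschitz, so $q$ is bounded. Consequently $\lim_{t\to0}(q(t)-\lambda t)$ exists as a finite supremum, and so does $L=\lim_{t\to0}q(t)$.

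\textbf{Step 2: define the angle.} $\cos\tilde\angle px\gamma(t)=-q(t)/(2f(0))+t/(2f(0))$ converges to $-L/(2f(0))$. This value lies in $[-1,1]$ as a limit of cosines, so the angle $\angle pxy=\arccos(-L/(2f(0)))$ is well-defined. Existence uses only one-sided monotonicity; concavity is exactly what bounds $q$ from above near $0$.

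\textbf{Step 3: quantitative comparison.} Monotonicity gives $q(t)-\lambda t\le L$ for all $t>0$. Hence
\[\cos\tilde\angle px\gamma(t)\ge\cos\angle pxy-\frac{(\lambda+1)t}{2|px|}\]
(taking $\lambda\ge0$). The remaining task is to convert this cosine estimate into an angle estimate with a uniform modulus. Since $\arccos$ is uniformly continuous on $[-1,1]$ (with modulus of order $\sqrt{\varepsilon}$), we have $\arccos(c-\varepsilon)\le\arccos c+\omega(\varepsilon)$, where $\omega(\varepsilon)\to0$ and $\omega$ is independent of $c$. Taking $\delta(s)=\omega((\lambda+1)s/2)$, which depends only on $\lambda$, yields \eqref{eq:comp}.

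\textbf{Main obstacle.} The only real subtlety is this uniformity near the endpoints $\pm1$ of $\arccos$, where it is not Lipschitz. Using the global $\sqrt{\cdot}$-type modulus of continuity of $\arccos$ handles it. Also, $t$ must stay in the range where $\gamma(t)$ remains in $U$, which normality of $B$ guarantees for $t\le|xy|$.
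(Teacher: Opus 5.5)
Your proof is correct and is essentially the paper's argument. The paper's monotone function $(|p\gamma(t)|^2-|px|^2-\lambda t^2)/(2|px|t)$ is exactly your $(q(t)-\lambda t)/(2|px|)$, and both existence of the limit and the comparison inequality come from its monotonicity; your $\arccos$ modulus-of-continuity step only makes explicit a conversion the paper leaves implicit. One harmless slip: your Step 3 arithmetic actually gives the sharper error $(\lambda-1)t/2|px|$ of Remark \ref{rem:comp}, not $(\lambda+1)t/2|px|$, and the former is the constant the paper relies on later in Proposition \ref{prop:unif}.
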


\begin{rem}\label{rem:comp}
The following proof shows that the error function $\delta$ is explicitly given by the sharp inequality
\[\cos\tilde\angle px\gamma(t)\ge\cos\angle pxy-(\lambda-1)t/2|px|,\]
where $\lambda\ge1$ is the local semiconcavity constant.
This explicit form will be used to prove the improved regularity of the Finsler structure in Theorem \ref{thm:conc}.
Note that the error term vanishes in the case of Alexandrov nonnegative curvature, i.e., $\lambda=1$ (see Example \ref{ex:concconv}).
\end{rem}

\begin{proof}[Proof of Lemma \ref{lem:ang}]
The local semiconcavity in Definition \ref{dfn:conc} implies that the function
\[f(t):=\frac{|p\gamma(t)^2|-|px|^2-\lambda t^2}{2|px|t}\]
is nonincreasing in $t>0$.
Hence the limit exists when $t\to 0$.
Furthermore, $f(t)$ is $((\lambda-1)t/2|px|)$-close to the function
\[g(t):=-\cos\tilde\angle px\gamma(t)=\frac{|p\gamma(t)|^2-|px|^2-t^2}{2|px|t}.\]
Thus the limit of $g(t)$ also exists and coincides with that of $f(t)$.
Together with the monotonicity of $f(t)$, this implies the desired inequality \eqref{eq:comp} (cf.\ Remark \ref{rem:comp}).
See \cite[Lemma 4.3(1)]{HY25} for more details (cf.\ \cite[Lemma 4.8]{FG25}).
\end{proof}

\begin{lem}\label{lem:semiconti}
Let $X$ be a locally semiconcave, locally uniquely geodesic space and $B$ a normal ball.
Then the angle is lower semicontinuous in the following sense:
If $p_i,x_i,y_i\in B$ converge to $p,x,y\in B$, respectively, then
\[\liminf_{i\to\infty}\angle p_ix_iy_i\ge\angle pxy\]
\end{lem}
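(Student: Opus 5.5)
The plan is the standard one for Alexandrov spaces: combine the almost comparison inequality \eqref{eq:comp} of Lemma \ref{lem:ang} with the continuity of comparison angles. Write $\gamma_i$ for the unique shortest path from $x_i$ to $y_i$, and $\gamma$ for the one from $x$ to $y$. Since $B$ is a normal ball and shortest paths in $B$ are unique, the standard Arzel\`a--Ascoli argument applies: any subsequence of $\gamma_i$ has a further subsequence converging uniformly to a shortest path from $x$ to $y$. By uniqueness that limit is $\gamma$, so $\gamma_i(t)\to\gamma(t)$ for each fixed small $t>0$. Here we use $x\neq y$, so $|x_iy_i|$ is bounded away from $0$ and $\gamma_i(t)$ is defined for all $t\le t_0$ and all large $i$.

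Next, fix a small $t>0$. For each $i$, Lemma \ref{lem:ang} gives
\[\tilde\angle p_ix_i\gamma_i(t)\le\angle p_ix_iy_i+\delta(t/|p_ix_i|).\]
The function $\delta$ depends only on the semiconcavity constant on $B$, so it is uniform in $i$. Since $p\neq x$, we have $|p_ix_i|\ge c>0$ for large $i$. Because $\delta$ can be taken monotone (e.g.\ the explicit form in Remark \ref{rem:comp}), this gives $\delta(t/|p_ix_i|)\le\delta(t/c)$.

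The comparison angle $\tilde\angle p_ix_i\gamma_i(t)$ is a continuous function of the three side lengths $|p_ix_i|$, $|p_i\gamma_i(t)|$ and $t$. The nondegeneracy conditions $|px|>0$ and $t>0$ hold, so it converges to $\tilde\angle px\gamma(t)$. Taking $\liminf$ gives
\[\tilde\angle px\gamma(t)\le\liminf_{i\to\infty}\angle p_ix_iy_i+\delta(t/c).\]
Finally, let $t\to0$. By Definition \ref{dfn:ang}, whose limit exists by Lemma \ref{lem:ang}, the left side tends to $\angle pxy$, while $\delta(t/c)\to0$. This yields the claim.

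The main obstacle is the convergence $\gamma_i(t)\to\gamma(t)$, which relies on local uniqueness of shortest paths in the normal ball. One must also ensure that the limit paths stay inside the region where the semiconcavity estimate holds. This is why the points are taken in $B$: all paths then lie in $10\bar B$.
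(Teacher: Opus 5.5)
Your proof is correct and is essentially the paper's argument: fix a small $t>0$, apply the almost comparison inequality \eqref{eq:comp} at $(p_i,x_i,\gamma_i(t))$ with an error uniform in $i$, pass to the limit $i\to\infty$ using $\tilde\angle p_ix_i\gamma_i(t)\to\tilde\angle px\gamma(t)$, and then let $t\to0$. One caveat: your Arzel\`a--Ascoli step tacitly uses local compactness, which is not in the lemma's hypotheses but is automatic for G-spaces, where the lemma is applied; the paper's proof relies on the same convergence implicitly when it asserts that $\tilde\angle p_ix_i\gamma_i(t)$ is close to $\tilde\angle px\gamma(t)$.
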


\begin{proof}
Let $\gamma(t)$ (resp.\ $\gamma_i(t)$) denote the shortest path from $x$ to $y$ (resp.\ $x_i$ to $y_i$) parameterized by arc-length.
By definition, $\angle pxy$ is close to $\tilde\angle px\gamma(t)$ for small $t>0$.
For sufficiently large $i$ depending on $t$, $\tilde\angle px\gamma(t)$ is close to $\tilde\angle p_ix_i\gamma_i(t)$.
Applying the almost comparison inequality \eqref{eq:comp} to $\tilde\angle p_ix_i\gamma_i(t)$ and taking the limit as $i\to\infty$ and $t\to0$, we get the desired inequality.
See \cite[Lemma 4.7]{HY25} for more details.
\end{proof}

\begin{lem}\label{lem:sum}
Let $X$ be a locally semiconcave, locally uniquely geodesic space, $B$ a normal ball, and $p\in B$.
Suppose that $y,x,z\in B$ lie on the same shortest path in this order.
Then we have
\[\angle pxy+\angle pxz\le \pi.\]
\end{lem}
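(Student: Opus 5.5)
The idea is to use the first variation formula \eqref{eq:fv} in the two opposite directions along the shortest path through $x$, and to compare the resulting one-sided derivatives using semiconcavity of $f(s)=|p\sigma(s)|$.

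Let $\sigma$ be the unit-speed shortest path containing $y,x,z$ in this order, parametrized so that $\sigma(0)=x$, $\sigma(s)$ moves toward $z$ for $s>0$ and toward $y$ for $s<0$. Since $X$ is locally uniquely geodesic and the points lie in the normal ball $B$, the unique shortest path from $x$ to $z$ is $\sigma|_{[0,|xz|]}$. Likewise, the unique shortest path from $x$ to $y$ is $s\mapsto\sigma(-s)$. By Lemma \ref{lem:ang} both angles exist, and \eqref{eq:fv} gives
\[
f'(0^+)=\lim_{s\to0^+}\frac{f(s)-f(0)}{s}=-\cos\angle pxz,\qquad
f'(0^-)=\lim_{s\to0^-}\frac{f(s)-f(0)}{s}=\cos\angle pxy .
\]
The second identity comes from applying \eqref{eq:fv} to $t\mapsto\sigma(-t)$ and substituting $s=-t$.

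Next I use semiconcavity. Because $\sigma$ is a shortest path in $B\subset U$ and $p\in U$, Definition \ref{dfn:conc} says that $s\mapsto f(s)^2-\lambda s^2$ is concave on a neighborhood of $0$. For a concave function $h$ we have $h'(0^+)\le h'(0^-)$, and the term $\lambda s^2$ has derivative $0$ at $s=0$. Hence
\[
(f^2)'(0^+)\le(f^2)'(0^-).
\]
Since $f(0)=|px|>0$ (we have $p\neq x$), this becomes $2|px|f'(0^+)\le2|px|f'(0^-)$, that is,
\[
-\cos\angle pxz\le\cos\angle pxy .
\]
Equivalently, $\cos\angle pxy\ge\cos(\pi-\angle pxz)$. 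Both $\angle pxy$ and $\pi-\angle pxz$ lie in $[0,\pi]$, where cosine is strictly decreasing, so $\angle pxy\le\pi-\angle pxz$. This is the claimed inequality $\angle pxy+\angle pxz\le\pi$.

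The main point to check is that the two one-sided derivatives of $f^2$ at $0$ exist and are correctly identified. Existence follows from concavity, and the identification follows from Lemma \ref{lem:ang}. One also has to make sure that the shortest path used in defining $\angle pxy$ really is the reversed segment of $\sigma$; this is exactly what local uniqueness of shortest paths inside the normal ball guarantees.
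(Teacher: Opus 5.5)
Your proof is correct and follows the paper's argument: semiconcavity makes the right derivative of $d(p,\cdot)$ along the path at most the left derivative, and the first variation formula \eqref{eq:fv} converts this into $\angle pxy+\angle pxz\le\pi$. The only cosmetic difference is that you apply concavity to $f^2-\lambda s^2$ and use $f(0)=|px|>0$, instead of invoking Remark \ref{rem:sq} for $d(p,\cdot)$ itself. One small precision: the segment from $y$ to $z$ need not stay in $B$, but it lies in $10\bar B\subset U$, which is all you need.
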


\begin{proof}
Let $\gamma(t)$ be the shortest path from $y$ to $z$ such that $\gamma(0)=x$.
By Remark \ref{rem:sq}, the distance function $d(p,\cdot)$ is locally semiconcave around $x$.
Hence its directional derivative satisfies
\[d(p,\gamma(0))^++d(p,\gamma(0))^-\le 0\]
(we use the notation $d(p,\gamma(0))^\pm$ to represent the directional derivative of $d(p,\cdot)$ in the direction $\gamma^\pm(0)$ rather than the right/left derivative).
By the first variation formula \eqref{eq:fv}, this is equivalent to the desired inequality.
See \cite[Lemma 4.8]{HY25} for more details (cf.\ \cite[Lemma 4.12]{FG25}).
\end{proof}

\subsection{Semiconvex case}

Next we discuss angles in locally semiconvex spaces in a parallel manner.
The proofs are the same as the previous ones and thus omitted: it suffices to simply reverse the previous inequalities in an appropriate way.

\begin{lem}\label{lem:ang'}
Let $X$ be a locally semiconvex, locally uniquely geodesic space and $B$ a normal ball.
Then for any $p,x,y\in B$, the angle $\angle pxy$ in Definition \ref{dfn:ang}  is well-defined and satisfies the following almost comparison inequality:
\begin{equation}\label{eq:comp'}
\tilde\angle px\gamma(t)\ge\angle pxy-\delta(t/|px|),
\end{equation}
where $\delta(s)$ is a positive function such that $\delta(s)\to0$ as $s\to0$, which depends only on the local semiconvexity constant of $X$ on $B$.
\end{lem}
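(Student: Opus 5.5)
The plan is to mirror the proof of Lemma \ref{lem:ang}, reversing inequalities. Fix $p,x,y\in B$, let $\gamma(t)$ be the unique unit-speed shortest path from $x$ to $y$, and let $\lambda$ be the local semiconvexity constant on $B$ (enlarging $\lambda$ if necessary, we may assume $\lambda\ge -1$). By Definition \ref{dfn:conv}, the function $h(t):=|p\gamma(t)|^2+\lambda t^2$ is convex on $[0,|xy|]$, since $10\bar B\subset U$. Hence the difference quotient
\[
f(t):=\frac{|p\gamma(t)|^2-|px|^2+\lambda t^2}{2|px|t}=\frac{h(t)-h(0)}{2|px|t}
\]
is nondecreasing in $t>0$. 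Moreover, $f$ is bounded below by $-1-|\lambda| t/2|px|$, using the triangle inequality $|p\gamma(t)|\ge |px|-t$. So the limit $L:=\lim_{t\to0}f(t)$ exists and is finite.

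Next, I compare $f$ with
\[
g(t):=-\cos\tilde\angle px\gamma(t)=\frac{|p\gamma(t)|^2-|px|^2-t^2}{2|px|t}.
\]
We have $f(t)-g(t)=(\lambda+1)t/2|px|$, which tends to $0$. Hence $\lim_{t\to0}g(t)=L$ exists. Since $g(t)\in[-1,1]$, we have $L\in[-1,1]$, and $\angle pxy:=\arccos(-L)$ is well-defined in the sense of Definition \ref{dfn:ang}. Equivalently, the first variation formula \eqref{eq:fv} holds.

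For the almost comparison inequality \eqref{eq:comp'}, monotonicity gives $f(t)\ge L=-\cos\angle pxy$ for all $t>0$. Therefore
\[
\cos\tilde\angle px\gamma(t)=-g(t)=-f(t)+(\lambda+1)t/2|px|\le\cos\angle pxy+(\lambda+1)t/2|px|.
\]
This is the sharp form analogous to Remark \ref{rem:comp}. Converting the cosine bound into an angle bound then gives $\tilde\angle px\gamma(t)\ge\angle pxy-\delta(t/|px|)$. Here $\delta(s)$ is defined as the supremum over $\theta\in[0,\pi]$ of $\theta-\arccos(\min\{1,\cos\theta+(\lambda+1)s/2\})$. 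By uniform continuity of $\arccos$ on $[-1,1]$, $\delta(s)\to0$ as $s\to0$, and $\delta$ depends only on $\lambda$.

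The only mildly delicate step is this last conversion. The map $\arccos$ is not Lipschitz near $\pm1$, so $\delta$ is merely a modulus (of order $\sqrt s$) rather than linear. Uniform continuity of $\arccos$ suffices, so I expect no real obstacle. I will also note that the lower bound on $f$ is needed to guarantee that the limit is finite before identifying it with a cosine.
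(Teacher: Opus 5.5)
Your proof is correct and is the argument the paper intends. The paper omits the proof as the mirror image of Lemma \ref{lem:ang}: monotonicity of the difference quotient of the convex function $t\mapsto|p\gamma(t)|^2+\lambda t^2$, the identity $f(t)-g(t)=(\lambda+1)t/2|px|$, and the resulting sharp bound $\cos\tilde\angle px\gamma(t)\le\cos\angle pxy+(\lambda+1)t/2|px|$, which is exactly what you carry out (your $\delta$ vanishes when $\lambda=-1$, so use e.g.\ $\delta(s)+s$ to get the strictly positive function the statement asks for).
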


Since the explicit form as in Remark \ref{rem:comp} will not be used, we will omit it.

\begin{lem}\label{lem:semiconti'}
Let $X$ be a locally semiconvex, locally uniquely geodesic space and $B$ a normal ball.
Then the angle is upper semicontinuous in the following sense:
If $p_i,x_i,y_i\in B$ converge to $p,x,y\in B$, respectively, then
\[\limsup_{i\to\infty}\angle p_ix_iy_i\le\angle pxy\]
\end{lem}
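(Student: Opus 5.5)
The argument mirrors the proof of Lemma \ref{lem:semiconti}, with every inequality reversed and the almost comparison inequality \eqref{eq:comp'} of Lemma \ref{lem:ang'} used in place of \eqref{eq:comp}.

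Let $\gamma$ denote the shortest path from $x$ to $y$, and $\gamma_i$ the shortest path from $x_i$ to $y_i$, each parameterized by arc-length. Since $X$ is locally uniquely geodesic and everything lies in a normal ball, these paths are unique. First I show that $\gamma_i(t)\to\gamma(t)$ for each fixed small $t>0$. By local compactness (the closure of a normal ball is compact, since $10\bar B$ lies in a compact neighborhood), Arzel\`a--Ascoli gives a subsequence along which $\gamma_i$ converges uniformly on $[0,t_0]$ to some shortest path from $x$ to a point of $xy$. Uniqueness of shortest paths then identifies the limit with $\gamma$ restricted to $[0,t_0]$. Hence the whole sequence converges, and in particular $\gamma_i(t)\to\gamma(t)$ for each $t\in(0,t_0]$.

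Next I compare angles. For fixed small $t>0$, Lemma \ref{lem:ang'} applied to the triple $p_i,x_i,y_i$ gives
\[\angle p_ix_iy_i\le\tilde\angle p_ix_i\gamma_i(t)+\delta(t/|p_ix_i|).\]
The comparison angle $\tilde\angle p_ix_i\gamma_i(t)$ is a continuous function of the three side lengths $|p_ix_i|$, $|p_i\gamma_i(t)|$ and $|x_i\gamma_i(t)|=t$. It stays continuous in the degenerate case $p=y$, because then $|px|>0$ and $t>0$. So as $i\to\infty$ this comparison angle converges to $\tilde\angle px\gamma(t)$. Since $|p_ix_i|\to|px|>0$, the error term is eventually bounded by something like $\delta(2t/|px|)$; this uses that $\delta$ may be taken monotone, or alternatively just its explicit form. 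Taking $\limsup_{i\to\infty}$ gives
\[\limsup_{i\to\infty}\angle p_ix_iy_i\le\tilde\angle px\gamma(t)+\delta(2t/|px|).\]

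Finally I let $t\to0$. By Definition \ref{dfn:ang}, which applies because Lemma \ref{lem:ang'} guarantees the angle is well-defined, $\tilde\angle px\gamma(t)\to\angle pxy$. At the same time $\delta(2t/|px|)\to0$, and this yields the claim.

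The main point needing care is the convergence $\gamma_i(t)\to\gamma(t)$. This is exactly where local uniqueness of shortest paths and local compactness enter, so I would check that the normal ball hypothesis, with $10\bar B$ inside the neighborhood $U$, keeps all the relevant shortest paths inside the region where uniqueness holds. A secondary detail is making $\delta$ monotone, which can be done by replacing $\delta$ with $\sup_{s'\le s}\delta(s')$.
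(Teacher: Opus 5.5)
Your proposal is essentially the paper's proof: it runs the argument of Lemma~\ref{lem:semiconti} with the inequalities reversed, applying \eqref{eq:comp'} to the triples $p_i,x_i,y_i$, passing to the limit in the comparison angles as $i\to\infty$, and then letting $t\to0$. The paper leaves the convergence $\gamma_i(t)\to\gamma(t)$ tacit, and you justify it by Arzel\`a--Ascoli, which needs local compactness; that is not among the lemma's stated hypotheses, but it holds in every application in the paper (G-spaces are proper), so your argument is sound in that setting.
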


\begin{lem}\label{lem:sum'}
Let $X$ be a locally semiconvex, locally uniquely geodesic space, $B$ a normal ball, and $p\in B$.
Suppose that $y,x,z\in B$ lie on the same shortest path in this order.
Then we have
\[\angle pxy+\angle pxz\ge \pi.\]
\end{lem}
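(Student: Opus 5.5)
We mirror the proof of Lemma \ref{lem:sum}, reversing the inequality. Let $\gamma(t)$ be the unit-speed shortest path from $y$ to $z$ with $\gamma(0)=x$, defined for $t\in[-a,b]$ with $a=|xy|$ and $b=|xz|$, so that $\gamma$ stays in $B$. Since $p\neq x$, the function $f(t):=d(p,\gamma(t))$ is positive near $t=0$. By the local semiconvexity (Definition \ref{dfn:conv}), $t\mapsto f(t)^2+\lambda t^2$ is convex on the interval where $\gamma$ lies in the neighborhood $U$. By Remark \ref{rem:sq}, $f$ is then semiconvex near $0$: $f(t)+\mu t^2$ is convex on a small interval $(-\varepsilon,\varepsilon)$, with $\mu$ depending on $\lambda$ and a lower bound for $|px|$.

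For a convex function $h(t)=f(t)+\mu t^2$, both one-sided derivatives exist and $h'_-(0)\le h'_+(0)$. Since $\mu t^2$ contributes nothing at first order, the same holds for $f$: $f'_-(0)\le f'_+(0)$. Now we identify these with angles via the first variation formula \eqref{eq:fv}, which is valid because Lemma \ref{lem:ang'} guarantees that the angles exist. Since $\gamma(t)$ for $t>0$ parametrizes the shortest path from $x$ toward $z$, we get $f'_+(0)=-\cos\angle pxz$. Likewise $t\mapsto\gamma(-t)$ is the shortest path from $x$ toward $y$, so $\lim_{t\to0^+}(f(-t)-f(0))/t=-\cos\angle pxy$, that is, $f'_-(0)=\cos\angle pxy$. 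Here we use local uniqueness of geodesics: the subsegments $xy$ and $xz$ of $\gamma$ are the unique shortest paths used in Definition \ref{dfn:ang}.

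The inequality $f'_-(0)\le f'_+(0)$ therefore reads $\cos\angle pxy\le-\cos\angle pxz=\cos(\pi-\angle pxz)$. Since both angles lie in $[0,\pi]$ and cosine is decreasing there, $\angle pxy\ge\pi-\angle pxz$, which is the desired inequality.

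The only delicate point is the passage from semiconvexity of $f^2$ to the one-sided derivative inequality for $f$ itself. To avoid invoking Remark \ref{rem:sq}, we can argue directly. Convexity of $f^2+\lambda t^2$ gives $(f^2)'_-(0)\le(f^2)'_+(0)$. Both of these one-sided derivatives exist, so $f$ has one-sided derivatives at $0$ with $(f^2)'_\pm(0)=2f(0)f'_\pm(0)$, and dividing by $2f(0)=2|px|>0$ gives $f'_-(0)\le f'_+(0)$. This handles the obstacle without any constant bookkeeping.
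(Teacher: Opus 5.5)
Your proposal is correct and follows essentially the paper's route. The paper omits this proof, saying only that the inequalities of Lemma \ref{lem:sum} should be reversed. That means: semiconvexity of $d(p,\cdot)$ along $\gamma$ gives $d(p,\gamma(0))^++d(p,\gamma(0))^-\ge 0$, and the first variation formula \eqref{eq:fv} turns this into $\angle pxy+\angle pxz\ge\pi$. Your direct argument via $f=\sqrt{f^2}$ and the chain rule for one-sided derivatives is a harmless streamlining that avoids Remark \ref{rem:sq}.
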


\subsection{Differentiability revisited}

By using the notion of angle, we can reformulate the local differentiability introduced in Definition \ref{dfn:diff} as follows.
This is due to the first variation formula \eqref{eq:fv}.

\begin{dfn}\label{dfn:diff'}
A locally uniquely geodesic space $X$ is \emph{locally differentiable} if every point has a neighborhood $U$ satisfying the following: the angle is well-defined in $U$, and for any $p\in U$ and $y,x,z\in U$ on a shortest path in this order, we have
\[\angle pxy+\angle pxz=\pi.\]
\end{dfn}

\subsection{General inequalities}

Finally, we prove two general inequalities for angles, both of which follow from the triangle inequality for the distance function.
Note that local semiconcavity/semiconvexity is not assumed here.

\begin{lem}\label{lem:gen1}
Let $X$ be a locally uniquely geodesic space and $B$ a normal ball.
Let $p,q,x,y\in B$ and suppose that $p,x,q$ lie on a shortest path in this order (see Figure \ref{fig:gen1}).
Then we have
\[\angle pxy+\angle qxy\ge\pi,\]
provided that both angles are well-defined.
\end{lem}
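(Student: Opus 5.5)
Let $\gamma(t)$ be the unit-speed shortest path from $x$ to $y$. By the first variation formula \eqref{eq:fv}, the hypothesis that both angles are well-defined means
\[
\lim_{t\to0}\frac{|p\gamma(t)|-|px|}{t}=-\cos\angle pxy,\qquad
\lim_{t\to0}\frac{|q\gamma(t)|-|qx|}{t}=-\cos\angle qxy .
\]
The plan is to add these two limits and bound the sum from below using only the triangle inequality. That yields a relation between the two cosines, from which the angle inequality follows by monotonicity of the cosine.

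The key step goes as follows. Since $p,x,q$ lie on a shortest path in this order, we have $|px|+|xq|=|pq|$. The triangle inequality through the point $\gamma(t)$ gives $|p\gamma(t)|+|\gamma(t)q|\ge|pq|$. Subtracting these gives
\[
\bigl(|p\gamma(t)|-|px|\bigr)+\bigl(|q\gamma(t)|-|qx|\bigr)\ge0
\]
for all small $t>0$. We divide by $t$ and let $t\to0$; both limits exist by assumption, so
\[
-\cos\angle pxy-\cos\angle qxy\ge0,\qquad\text{i.e.}\qquad \cos\angle pxy\le-\cos\angle qxy=\cos(\pi-\angle qxy).
\]
Both $\angle pxy$ and $\pi-\angle qxy$ lie in $[0,\pi]$, and cosine is strictly decreasing there. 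Hence $\angle pxy\ge\pi-\angle qxy$, which is the claimed inequality.

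There is no real obstacle here; the argument needs only bookkeeping. The angles are defined via comparison angles, so they take values in $[0,\pi]$, and the limit in \eqref{eq:fv} is equivalent to Definition \ref{dfn:ang}. The convention $p\ne x\ne y$ (and likewise $q\ne x$) guarantees that both comparison angles make sense. All points stay in the normal ball $B$ for small $t$, so local uniqueness of $\gamma$ is available. No semiconcavity or semiconvexity is used, consistent with the remark preceding the lemma.
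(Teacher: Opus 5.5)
Your proposal is correct and follows the paper's own proof essentially verbatim. Like the paper, you combine $|p\gamma(t)|+|q\gamma(t)|\ge|pq|=|px|+|xq|$, divide by $t$, pass to the limit via the first variation formula \eqref{eq:fv}, and conclude by monotonicity of cosine on $[0,\pi]$.
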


\begin{figure}[ht]
\centering
\begin{tikzpicture}
\coordinate[label=below:$p$](p)at(-1,-2);
\coordinate[label=below right:$x$](x)at(0,0);
\coordinate[label=right:$y$](y)at(1,0);
\coordinate[label=above:$q$](q)at(1,2);

\draw(p)to(x)to(q);
\draw(x)to(y);
\draw[dashed](p)to(y);
\draw[dashed](q)to(y);

\fill(p)circle(1.5pt);
\fill(x)circle(1.5pt);
\fill(y)circle(1.5pt);
\fill(q)circle(1.5pt);

\pic[draw, angle radius=5mm] {angle = p--x--y};
\pic[draw, angle radius=5mm] {angle = y--x--q};
\end{tikzpicture}
\caption{}\label{fig:gen1}
\end{figure}

\begin{proof}
Let $\gamma(t)$ be the shortest path from $x$ to $y$ parameterized by arc-length.
By the triangle inequality and the fact that $p,x,q$ are on a shortest path, we have
\[|p\gamma(t)|-|px|+|q\gamma(t)|-|qx|\ge0.\]
Dividing both sides by $t>0$ and taking $t\to0$ as in the first variation formula \eqref{eq:fv}, we obtain the desired inequality (note that $\cos\alpha+\cos\beta\le0$ if and only if $\alpha+\beta\ge\pi$ for any $\alpha,\beta\in[0,\pi]$).
\end{proof}

\begin{lem}\label{lem:gen2}
Let $X$ be a locally uniquely geodesic space and $B$ a normal ball.
Let $p,p',x,y\in B$ and suppose that $p,p',x$ lie on a shortest path in this order (see Figure \ref{fig:gen2}).
Then we have
\[\angle pxy\le\angle p'xy,\]
provided that both angles are well-defined.
\end{lem}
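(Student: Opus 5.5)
Since $p,p',x$ lie on a shortest path in this order, we have $|px|=|pp'|+|p'x|$. Let $\gamma(t)$ be the shortest path from $x$ to $y$ parameterized by arc-length. By the triangle inequality,
\[|p\gamma(t)|\le|pp'|+|p'\gamma(t)|.\]
Subtracting $|px|=|pp'|+|p'x|$ from both sides, we obtain, for small $t>0$,
\[|p\gamma(t)|-|px|\le|p'\gamma(t)|-|p'x|.\]

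Next we divide by $t>0$ and let $t\to0$. Both angles are assumed to be well-defined, so the first variation formula \eqref{eq:fv} applies to each side. Note that $p\ne x$ and $p'\ne x$, as the convention for angles requires. This gives
\[-\cos\angle pxy\le-\cos\angle p'xy,\]
that is, $\cos\angle pxy\ge\cos\angle p'xy$.

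Since cosine is strictly decreasing on $[0,\pi]$, this is equivalent to $\angle pxy\le\angle p'xy$, as claimed.

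There is no real obstacle here; the argument is parallel to the proof of Lemma \ref{lem:gen1}. The only point to check is that the limits in \eqref{eq:fv} exist for both $p$ and $p'$, and this is exactly the hypothesis that both angles are well-defined. The degenerate case $p=p'$ is trivial.
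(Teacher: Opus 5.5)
Your proposal is correct and follows the paper's proof: the triangle inequality through $p'$, combined with $|px|=|pp'|+|p'x|$, gives $|p\gamma(t)|-|px|\le|p'\gamma(t)|-|p'x|$. Dividing by $t>0$ and letting $t\to0$ via the first variation formula \eqref{eq:fv} then yields the claim by the monotonicity of cosine on $[0,\pi]$.
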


\begin{figure}[ht]
\centering
\begin{tikzpicture}
\coordinate[label=below:$p$](p)at(-1,-3);
\coordinate[label=left:$p'$](p')at(-1/2,-3/2);
\coordinate[label=left:$x$](x)at(0,0);
\coordinate[label=right:$y$](y)at(1,0);

\draw(p)to(x)to(y);
\draw[dashed](p)to(y);
\draw[dashed](p')to(y);

\fill(p)circle(1.5pt);
\fill(x)circle(1.5pt);
\fill(y)circle(1.5pt);
\fill(p')circle(1.5pt);

\pic[draw, angle radius=3mm] {angle = p--x--y};
\end{tikzpicture}
\caption{}\label{fig:gen2}
\end{figure}

\begin{proof}
Let $\gamma(t)$ be the shortest path from $x$ to $y$ parameterized by arc-length.
By the triangle inequality and the fact that $p,p',x$ are on a shortest path, we have
\[|p\gamma(t)|-|px|\le|p'\gamma(t)|-|p'x|\]
Dividing both sides by $t>0$ and taking $t\to0$ as in the first variation formula \eqref{eq:fv}, we obtain the desired inequality.
\end{proof}

\begin{rem}
The above two properties of angle are analogous to those of the \emph{Busemann function}, which is the main ingredient in the proof of the splitting theorem in various contexts.
In fact, what we develop in the next subsection can be viewed as a variant of the splitting argument.
\end{rem}

\section{Semiconcave case}\label{sec:conc}

In this section, we prove Theorem \ref{thm:conc} for locally semiconcave G-spaces.
The proof consists of the following four steps.
In Section \ref{sec:key}, we prove key lemmas regarding angles.
We then introduce strainer coordinates in Section \ref{sec:str} and establish key properties of strainer curves in Section \ref{sec:cur}.
Using these properties, we define the $C^{1,1/2}$ and DC structures in Section \ref{sec:dc}.
Finally, in Section \ref{sec:fin}, we construct a Finsler metric.

\subsection{Key lemmas}\label{sec:key}

Here we prove several key lemmas concerning angles in locally semiconcave G-spaces.

The first property is the local differentiability of locally semiconcave G-spaces.
Although we will prove a stronger claim later in Proposition \ref{prop:unif}, we start with this simpler version, which demonstrates a typical use of semiconcavity and is sufficient to show other properties of angles.

\begin{lem}\label{lem:diff}
Let $X$ be a locally semiconcave G-space, $B$ a normal ball, and $p\in B$.
Suppose that $y,x,z\in B$ lie on the same shortest path in this order.
Then we have
\[\angle pxy+\angle pxz=\pi.\]
In other words, $X$ is locally differentiable (see Definition \ref{dfn:diff'}).
\end{lem}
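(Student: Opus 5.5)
Since Lemma \ref{lem:sum} already gives $\angle pxy+\angle pxz\le\pi$, it suffices to prove the reverse inequality $\angle pxy+\angle pxz\ge\pi$. The natural tool is Lemma \ref{lem:gen1}, which yields a lower bound $\ge\pi$ for a sum of two angles at $x$ whenever the two viewpoints lie on a shortest path through $x$. The G-space axiom will provide such a configuration: extend the shortest path from $p$ to $x$ beyond $x$.

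First consider $p\neq x$. By Definition \ref{dfn:g} and since $10\bar B$ lies in the G-space neighborhood, the shortest path $px$ extends uniquely past $x$ to a point $q$ with $p,x,q$ on a shortest path. We may take $q$ close to $x$, hence inside the region where angles are defined and Lemma \ref{lem:ang} applies. Applying Lemma \ref{lem:gen1} to $p,x,q$ with target $y$, and then with target $z$, gives
\[\angle pxy+\angle qxy\ge\pi,\qquad \angle pxz+\angle qxz\ge\pi.\]
Lemma \ref{lem:sum} applied with viewpoint $q$ gives $\angle qxy+\angle qxz\le\pi$. Adding the two inequalities from Lemma \ref{lem:gen1} and subtracting this last one yields
\[\angle pxy+\angle pxz\ge 2\pi-(\angle qxy+\angle qxz)\ge\pi.\]
Combined with Lemma \ref{lem:sum}, this gives equality. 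The extension point $q$ must stay in the normal ball region (or in a slightly larger ball where the same lemmas hold), and the factor $10$ in Definition \ref{dfn:nor} is designed to handle exactly this.

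The case $p=x$ is excluded by the convention $p\neq x$. The reformulation ``locally differentiable'' follows from Definition \ref{dfn:diff'}, since angles are well defined by Lemma \ref{lem:ang}.

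The main obstacle is bookkeeping rather than substance. The extension $q$ must lie where Lemmas \ref{lem:ang}, \ref{lem:sum} and \ref{lem:gen1} are valid, and $q\neq x$ is needed so that the angles $\angle qxy$ and $\angle qxz$ are defined. Both requirements are met by choosing the extension length small but positive and using the room provided by $10\bar B$.
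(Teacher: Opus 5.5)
Your proposal is correct and is essentially the paper's proof. You extend $px$ beyond $x$ to a point $q$, obtain the two opposite-angle inequalities from Lemma \ref{lem:gen1}, and the two adjacent-angle inequalities from Lemma \ref{lem:sum} at the viewpoints $p$ and $q$, then combine the four to get equality.
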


\begin{proof}
Let $q\in B$ be a point on the extension of the shortest path $px$ beyond $x$.
See Figure \ref{fig:diff}.

\begin{figure}[ht]
\centering
\begin{tikzpicture}
\coordinate[label=below:$p$](p)at(-1,-3);
\coordinate[label=below right:$x$](x)at(0,0);
\coordinate[label=left:$z$](z)at(-1,0);
\coordinate[label=right:$y$](y)at(1,0);
\coordinate[label=above:$q$](q)at(1,3);

\draw(p)to(x)to(q);
\draw(z)to(x)to(y);
\draw[dashed](p)to(y);
\draw[dashed](p)to(z);
\draw[dashed](q)to(y);
\draw[dashed](q)to(z);

\fill(p)circle(1.5pt);
\fill(x)circle(1.5pt);
\fill(y)circle(1.5pt);
\fill(z)circle(1.5pt);
\fill(q)circle(1.5pt);

\pic[draw, angle radius=5mm] {angle = p--x--y};
\pic[draw, angle radius=5mm] {angle = z--x--p};
\end{tikzpicture}
\caption{}\label{fig:diff}
\end{figure}

By Lemma \ref{lem:gen1} for opposite angles, we have
\[\angle pxy+\angle qxy\ge\pi,\quad \angle pxz+\angle qxz\ge\pi.\]
On the other hand, by Lemma \ref{lem:sum} for adjacent angles, we have
\[\angle pxy+\angle pxz\le\pi,\quad\angle qxy+\angle qxz\le\pi.\]
Combining the above four inequalities, we obtain the desired equality.
\end{proof}

\begin{rem}\label{rem:diff1}
The above proof also shows the other equalities, i.e.,
\[\angle pxy+\angle qxy=\pi,\quad\angle pxz+\angle qxz=\pi,\quad\angle qxy+\angle qxz=\pi.\]
\end{rem}

\begin{rem}\label{rem:diff2}
The above argument does not apply to locally semiconvex G-spaces (see Remark \ref{rem:main} for a counterexample).
The reason it works in Lemma \ref{lem:diff} is that Lemma \ref{lem:sum} based on local semiconcavity (together with the geodesic extension property) provides an opposite constraint to Lemma \ref{lem:gen1} derived from the triangle inequality.
On the other hand, Lemma \ref{lem:sum'} based on local semiconvexity gives a restriction in the same direction as the triangle inequality.
In fact, local semiconvexity can be regarded as a variant of the triangle inequality.
\end{rem}

The local differentiability improves the lower semicontinuity of angles in Lemma \ref{lem:semiconti} to the genuine continuity.

\begin{lem}\label{lem:conti}
Let $X$ be a locally semiconcave G-space and $B$ a normal ball.
Then the angle is continuous in the following sense:
If $p_i,x_i,y_i\in B$ converge to $p,x,y\in B$, respectively, then
\[\lim_{i\to\infty}\angle p_ix_iy_i=\angle pxy.\]
\end{lem}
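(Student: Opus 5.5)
We already have lower semicontinuity from Lemma \ref{lem:semiconti}, so it remains to prove upper semicontinuity:
\[\limsup_{i\to\infty}\angle p_ix_iy_i\le\angle pxy.\]
The idea is to convert the upper bound into a lower bound on a supplementary angle, using the local differentiability of Lemma \ref{lem:diff}, and then apply lower semicontinuity to that supplementary angle.

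First we extend the shortest paths backwards. Since $X$ is a G-space and $B$ is a normal ball, the shortest path $x_iy_i$ can be extended beyond $x_i$. Fix a small $s>0$ such that the extension of $yx$ beyond $x$ by length $s$ stays in $B$, and let $z$ be the endpoint of that extension. Let $z_i$ be the point on the extension of $y_ix_i$ beyond $x_i$ with $|x_iz_i|=s$.

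The key step is to show that $z_i\to z$. By local compactness, a subsequence of the extended shortest paths $z_iy_i$ converges to a shortest path through $y$ and $x$, of length $|xy|+s$. By the uniqueness of extensions in the G-space (Definition \ref{dfn:g}, applied in the neighborhood $U\supset 10\bar B$), this limit must coincide with the unique extension of $yx$. Hence $z_i\to z$ along the full sequence. This is the step I expect to need the most care: the limit of shortest paths is a shortest path because lengths and distances converge, and uniqueness of the extension then forces the limit.

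Finally we combine the two ingredients. By Lemma \ref{lem:diff}, $\angle p_ix_iy_i=\pi-\angle p_ix_iz_i$ and $\angle pxy=\pi-\angle pxz$; here we need $p_i\ne x_i$, which holds for large $i$ because $p\ne x$. Lemma \ref{lem:semiconti} applied to $p_i,x_i,z_i\to p,x,z$ gives $\liminf\angle p_ix_iz_i\ge\angle pxz$. Therefore
\[\limsup_{i\to\infty}\angle p_ix_iy_i=\pi-\liminf_{i\to\infty}\angle p_ix_iz_i\le\pi-\angle pxz=\angle pxy.\]
Together with Lemma \ref{lem:semiconti}, this proves continuity.
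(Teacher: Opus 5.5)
Your proposal is correct and is the paper's argument: extend $y_ix_i$ beyond $x_i$ to points $z_i\to z$, use Lemma \ref{lem:diff} to write $\angle p_ix_iy_i$ and $\angle pxy$ as supplements of $\angle p_ix_iz_i$ and $\angle pxz$, then apply the lower semicontinuity of Lemma \ref{lem:semiconti} to the supplementary angles. Your Arzel\`a--Ascoli plus uniqueness-of-extension justification that $z_i\to z$ simply fills in the step the paper covers with ``we may assume.''
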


\begin{proof}
Let $z\in B$ (resp.\ $z_i\in B$) be a point on the extension of the shortest path $yx$ (resp.\ $y_ix_i$) beyond $x$ (resp.\ $x_i$).
We may assume that $z_i$ converges to $z$.
By Lemma \ref{lem:diff}, we have
\[\angle pxy+\angle pxz=\pi, \quad \angle p_ix_iy_i+\angle p_ix_iz_i=\pi.\]
On the other hand, by Lemma \ref{lem:semiconti}, we have
\[\liminf_{i\to\infty}\angle p_ix_iy_i\ge\angle pxy,\quad\liminf_{i\to\infty}\angle p_ix_iz_i\ge\angle pxz.\]
Combining these equalities and inequalities yields the desired equality.
\end{proof}

The local differentiability also shows that the angle is well-defined independently of the choice of points on shortest paths.

\begin{lem}\label{lem:well}
Let $X$ be a locally semiconcave G-space and $B$ a normal ball.
Then for any $p,x,y\in B$, we have
\[\angle pxy=\angle p'xy',\]
where $p',y'\in B$ are arbitrary points other than $x$ on (the extension of) the shortest paths $xp$ and $xy$ (beyond $p$ and $y$), respectively.
\end{lem}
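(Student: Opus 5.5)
We treat the two replacements separately: first $y$ by $y'$ with $p$ fixed, then $p$ by $p'$ with the direction $xy'$ fixed. Combining the two gives $\angle pxy=\angle pxy'=\angle p'xy'$. Here $p'$ and $y'$ are points of $B$ on the maximal extensions of $xp$ and $xy$, respectively. Since $B$ is normal, these extensions stay inside the unique-extension neighborhood, so all relevant shortest paths are unique.

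\emph{Replacing $y$ by $y'$.} Suppose $y'$ lies on the shortest path $xy$ or on its extension beyond $y$. Then, by local uniqueness of shortest paths in the normal ball, the arc-length shortest paths from $x$ to $y$ and from $x$ to $y'$ coincide for small $t$: both are initial segments of one geodesic starting at $x$. Definition \ref{dfn:ang} depends only on $\gamma(t)$ for small $t$, so $\angle pxy=\angle pxy'$ follows immediately.

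\emph{Replacing $p$ by $p'$.} This is the substantive step. First let $p'$ lie on the shortest path $xp$, so that $p,p',x$ lie on a shortest path in this order. Lemma \ref{lem:gen2} gives $\angle pxy\le\angle p'xy$.

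For the reverse inequality, let $z$ be a point on the extension of $yx$ beyond $x$. Lemma \ref{lem:gen2} applied to the direction $xz$ gives $\angle pxz\le\angle p'xz$. Lemma \ref{lem:diff} gives $\angle pxy+\angle pxz=\pi$ and $\angle p'xy+\angle p'xz=\pi$. Hence
\[\angle p'xy=\pi-\angle p'xz\le\pi-\angle pxz=\angle pxy,\]
and therefore $\angle pxy=\angle p'xy$.

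If instead $p'$ lies on the extension of $xp$ beyond $p$, then $p',p,x$ lie on a shortest path in this order, provided the extended segment is still shortest. The same argument then applies with the roles of $p$ and $p'$ swapped.

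The main obstacle is this last point: the extension of $xp$ beyond $p$ need not remain a shortest path all the way to $p'$. I expect normality to handle it. The ball $10\bar B$ lies in the neighborhood $U$ of Definition \ref{dfn:g}, and I would check that geodesic segments of length at most the diameter of $B$ there are minimizing.

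If that fails, I would chain the argument along the extension instead. The function $s\mapsto\angle\sigma(s)xy$ is constant on every short subsegment of $\sigma$ that is shortest, where $\sigma$ is the extension of $xp$. It is also continuous in $s$ by Lemma \ref{lem:conti}. Any segment of the extension contained in $U$ is locally minimizing, so each point of $\sigma$ has a neighborhood on which $s\mapsto\angle\sigma(s)xy$ is constant. Hence the function is locally constant on the connected parameter interval, and so it is constant.
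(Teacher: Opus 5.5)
Your proposal is correct and is essentially the paper's proof: $\angle pxy=\angle pxy'$ is immediate from Definition \ref{dfn:ang}, and when $p,p',x$ lie on a shortest path the paper likewise combines Lemma \ref{lem:gen2} (applied to $y$ and to a point $z$ on the extension of $yx$ beyond $x$) with Lemma \ref{lem:diff}, handling $p'$ beyond $p$ by symmetry. Your worry about the extension is settled by Definition \ref{dfn:g}: inside $U\supset 10\bar B$ the extension is itself a shortest path, which is why the paper simply writes ``we may assume that $p,p',x$ lie on a shortest path in this order''. So your fallback is unnecessary, and as written it would not work anyway, since Lemma \ref{lem:gen2} needs the whole segment from $x$ to the farther point to be shortest, not merely a short subsegment of $\sigma$.
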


\begin{proof}
By the definition of angle in Definition \ref{dfn:ang}, it is clear that $\angle pxy=\angle pxy'$.
Thus it remains to show that $\angle pxy=\angle p'xy$.
We may assume that $p,p',x$ lie on a shortest path in this order.
See Figure \ref{fig:well}.

\begin{figure}[ht]
\centering
\begin{tikzpicture}
\coordinate[label=below:$p$](p)at(-1,-3);
\coordinate[label=left:$p'$](p')at(-1/2,-3/2);
\coordinate[label=above:$x$](x)at(0,0);
\coordinate[label=right:$y$](y)at(1,0);
\coordinate[label=left:$z$](z)at(-1,0);

\draw(p)to(x);
\draw(z)to(x)to(y);
\draw[dashed](p)to(y);
\draw[dashed](p')to(y);
\draw[dashed](p)to(z);
\draw[dashed](p')to(z);

\fill(p)circle(1.5pt);
\fill(x)circle(1.5pt);
\fill(y)circle(1.5pt);
\fill(p')circle(1.5pt);
\fill(z)circle(1.5pt);

\pic[draw, angle radius=3mm] {angle = p--x--y};
\end{tikzpicture}
\caption{}\label{fig:well}
\end{figure}

Let $z\in B$ be a point on the extension of the shortest path $yx$ beyond $x$.
By Lemma \ref{lem:gen2}, we have
\[\angle pxy\le\angle p'xy,\quad\angle pxz\le\angle p'xz.\]
By Lemma \ref{lem:diff}, we have
\[\angle pxy+\angle pxz=\pi,\quad\angle p'xy+\angle p'xz=\pi.\]
Combining these equalities and inequalities yields the desired inequality.
\end{proof}

\begin{rem}\label{rem:well}
Lemma \ref{lem:well} means that the angle is well-defined on the set of shortest paths.
More precisely, let $\Gamma_x$ be the set of shortest paths emanating from $x\in B$.
Then for any $\gamma,\eta\in\Gamma_x$, we can define
\[\angle(\gamma,\eta):=\angle pxy,\]
where $p,y\in B$ are arbitrary points (other than $x$) on $\gamma$ and $\eta$, respectively.
Note that $\angle(\gamma,\eta)\neq\angle(\eta,\gamma)$ in general.
Lemma \ref{lem:diff} and Remark \ref{rem:diff1} also show
\[\angle(\gamma,\eta)+\angle(\gamma,-\eta)=\pi,\quad\angle(\gamma,\eta)+\angle(-\gamma,\eta)=\pi,\]
where $-\gamma,-\eta\in\Gamma_x$ denote the ``opposite'' shortest paths for $\gamma,\eta$, respectively, defined by the unique extendability of geodesics.
Furthermore, by Lemma \ref{lem:conti}, the angle is continuous with respect to the convergence of shortest paths.
\end{rem}

Now we provide a stronger claim, which we call the \emph{uniform convergence of angle}.
This is the most important property in the present paper.
Compare with \cite[Fact (b)]{OS94} (see also \cite[Claim 4.7]{Ot93}) in the Alexandrov setting (not necessarily a G-space), whose proof relies on the Riemannian property of Alexandrov spaces.
We bypass this issue by using the geodesic extension property of a G-space, as in Lemma \ref{lem:diff}.

\begin{prop}\label{prop:unif}
Let $X$ be a locally semiconcave G-space and $B$ a normal ball.
Then for any $p,x,y\in B$, we have
\[|\angle pxy-\tilde \angle pxy|<\delta(|xy|/|px|),\]
where $\delta(s)$ is a positive function independent of $p,x,y$ such that $\delta(s)\to 0$ as $s\to 0$.
More precisely, it is given by the following explicit formula:
\[|\cos\angle pxy-\cos\tilde \angle pxy|<10\lambda|xy|/|px|,\]
where $\lambda\ge1$ is the local semiconcavity constant of $X$ on $B$.
\end{prop}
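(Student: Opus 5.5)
We prove the two one-sided bounds separately. Both come from the almost comparison inequality of Lemma \ref{lem:ang}, in the explicit form of Remark \ref{rem:comp}. The lower bound is applied once directly, and once at the same vertex $x$ for the extended segment, using the differentiability of Lemma \ref{lem:diff} (through Remark \ref{rem:diff1}) to turn it into an upper bound. Throughout, write $t=|xy|$, $\gamma$ for the shortest path from $x$ to $y$, so that $y=\gamma(t)$, and set $\theta=\angle pxy$.

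\emph{Upper bound for $\cos\theta$.} Remark \ref{rem:comp} applied to $p$, $x$ and $\gamma$ at $t=|xy|$ gives
\[\cos\tilde\angle pxy\ge\cos\theta-(\lambda-1)|xy|/2|px|.\]
This is one half of the claimed estimate, with a much better constant than $10\lambda$.

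\emph{Lower bound for $\cos\theta$.} Extend $yx$ beyond $x$ to a point $z$ with $|xz|=|xy|=t$; this is possible inside $10\bar B$ by the G-space property. Let $\eta$ be the shortest path from $x$ to $z$. By Lemma \ref{lem:diff}, $\angle pxz=\pi-\theta$. Remark \ref{rem:comp} applied to $\eta$ gives
\[\cos\tilde\angle pxz\ge-\cos\theta-(\lambda-1)t/2|px|.\]
Now $y,x,z$ lie on one shortest path with $|yz|=2t$, so the semiconcavity of $s\mapsto|p\sigma(s)|^2-\lambda s^2$ along the segment from $y$ to $z$ at its midpoint $x$ yields
\[|py|^2+|pz|^2-2|px|^2\le 2\lambda t^2.\]
By the law of cosines,
\[\cos\tilde\angle pxy+\cos\tilde\angle pxz=\frac{2|px|^2+2t^2-|py|^2-|pz|^2}{2|px|t}.\]
The semiconcavity inequality shows this sum is at least $-(\lambda-1)t/|px|$, which is the wrong direction for us. We need an upper bound on the sum, that is, a lower bound on $|py|^2+|pz|^2-2|px|^2$. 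That lower bound follows from the triangle inequality via the point $q$ on the extension of $px$ beyond $x$, exactly as in Lemma \ref{lem:diff}.

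\emph{Obtaining the upper bound on the sum.} I expect this to be the main obstacle, since the convexity-type bound must come from the extension trick rather than from semiconcavity. Choose $q$ on the extension of $px$ beyond $x$ with $|xq|=|px|$; this is where the normality factor $10$ is used, and if $|px|$ is too large one takes $|xq|$ comparable to the radius and loses a constant. The triangle inequality gives $|py|+|yq|\ge|pq|=|px|+|xq|$, and similarly for $z$. Applying the already proved upper bound, with $q$ as base point, to $\tilde\angle qxy$ and $\tilde\angle qxz$ bounds $|qy|$ and $|qz|$ from above in terms of $\angle qxy$ and $\angle qxz$. By Remark \ref{rem:diff1}, $\angle qxy=\pi-\theta$ and $\angle qxz=\theta$. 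Substituting these into the triangle inequalities bounds $|py|$ from below by $|px|-t\cos\theta$ minus an error of order $\lambda t^2/|px|$. Squaring converts this into $\cos\tilde\angle pxy\le\cos\theta+C\lambda t/|px|$. Here we may assume $t/|px|$ is small, say at most $1/10$, since otherwise the claimed bound is trivial because both cosines lie in $[-1,1]$.

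Finally, the explicit cosine estimate implies the angle estimate, with $\delta$ obtained from the modulus of continuity of $\arccos$.
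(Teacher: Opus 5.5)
Your proof is correct and is essentially the paper's argument. The nontrivial direction comes, as in the paper, from the point $q$ on the extension of $px$ with $|xq|=|px|$, the triangle inequality $|py|+|qy|\ge|px|+|qx|$ (i.e.\ \eqref{eq:unif1}), and the explicit almost comparison of Remark \ref{rem:comp} at base point $q$; you shortcut the paper's bookkeeping with $z$ in \eqref{eq:unif2}--\eqref{eq:unif4} by invoking the exact identity $\angle qxy=\pi-\angle pxy$ from Remark \ref{rem:diff1}, so your detour through $z$, midpoint semiconcavity and $|qz|$ can simply be dropped. One minor slip: the reduction to $|xy|/|px|\le1/10$ does not guarantee positivity before squaring when $\lambda$ is large, so reduce instead to $\lambda|xy|/|px|\le1/5$, or note that your lower bound for $|py|$ is automatically positive whenever the target inequality is nontrivial.
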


The point of the above claim is that $\delta$ is independent of $p,x,y$; compare with Definition \ref{dfn:ang} and Lemma \ref{lem:ang}.
As mentioned in Remark \ref{rem:comp}, the explicit formula is necessary for the proof of the $1/2$-H\"older continuity in Theorem \ref{thm:conc}.

\begin{proof}
The proof is a refinement of that of Lemma \ref{lem:diff}.
Let $z\in B$ be a point on the extension of the shortest path $yx$ beyond $x$ such that $|xy|=|xz|$.
Let $q\in B$ be a point on the extension of the shortest path $px$ beyond $x$ such that $|px|=|qx|$.
Set $s:=|xy|/|px|$ and assume $s\ll1$.
See again Figure \ref{fig:diff}.

In what follows, we repeat the same argument as in the proof of Lemma \ref{lem:diff}, but with refined quantitative estimates.
We first provide a comparison angle version of Lemma \ref{lem:gen1}.
By the Euclidean law of cosine, we have
\[\left|\cos\tilde\angle pxy-\frac{|px|-|py|}{|xy|}\right|\le s/2\]
(since $(a^2+b^2-c^2)/2ab=(a-c)/b+(b^2-(a-c)^2)/2ab$).
The same inequality also holds for $\tilde\angle qxy$.
Combining these two inequalities with the triangle inequality $|py|+|qy|\ge|px|+|qx|$, we obtain
\begin{equation}\label{eq:unif1}
\cos\tilde\angle pxy+\cos\tilde\angle qxy\le s.
\end{equation}
Similarly, we have
\begin{equation}\label{eq:unif2}
\cos\tilde\angle pxz+\cos\tilde\angle qxz\le s.
\end{equation}

On the other hand, by the explicit version of the almost comparison inequality in Remark \ref{rem:comp} and Lemma \ref{lem:sum}, we obtain
\begin{equation}\label{eq:unif3}
\cos\tilde\angle pxy+\cos\tilde\angle pxz\ge\cos\angle pxy+\cos\angle pxz-\lambda s\ge-\lambda s.
\end{equation}
Similarly, we have
\begin{equation}\label{eq:unif4}
\cos\tilde\angle qxy+\cos\tilde\angle qxz\ge-\lambda s,
\end{equation}
Combining the inequalities \eqref{eq:unif1}, \eqref{eq:unif2}, \eqref{eq:unif3}, \eqref{eq:unif4} yields the desired inequality.
\end{proof}

Finally, we provide some remark on the work of Pogorelov \cite[Chapter 3]{Po98}.

\begin{rem}\label{rem:pog1}
The properties of angle established in this subsection show that any locally semiconcave G-space satisfies \emph{Axiom A} in \cite[Chapter 3]{Po98}.
Roughly speaking, Axiom A claims the existence of a well-defined angle (called \emph{slope} in \cite{Po98}) on the set of shortest paths, similar to Lemma \ref{lem:well} and Remark \ref{rem:well}, that varies continuously as in Lemma \ref{lem:conti} and satisfies the sum formulae for adjacent/opposite angles as in Lemma \ref{lem:diff} and Remark \ref{rem:diff1}.

More precisely, the uniform convergence of angle, Proposition \ref{prop:unif}, ensures that the slope is certainly equal to minus the cosine of our angle.
Therefore, the main theorem in \cite[Chapter 3]{Po98} that claims the existence of a continuous Finsler metric can be applied to our case.
However, as mentioned in the introduction, since the proof is a little bit sketchy, we shall provide a detailed argument.
For example, although it is not explicitly stated, Axiom A implies the uniform convergence of angle similar to Proposition \ref{prop:unif}.
See Remark \ref{rem:pog3}.
\end{rem}

\subsection{Strainer charts}\label{sec:str}

Next we introduce a strainer chart in a locally semiconcave G-space.
Roughly speaking, a strainer is a collection of points that plays a similar role to a linearly independent family of vectors in the tangent space.

The notion of a strainer was first introduced in the study of Alexandrov spaces by Burago--Gromov--Perelman \cite{BGP92}, then adopted to GCBA spaces by Lytchak--Nagano \cite{LN19}, and more recently, to GNPC spaces by the authors \cite{FG25} (cf.\ \cite{HY25}).
Here we modify the one used in \cite{FG25}, in order to deal with the asymmetry of the angle resulting from the Finsler character of G-spaces.
We remark that the same asymmetry issue was already addressed by Pogorelov \cite{Po90reg, Po98}.

Let $k$ be a positive integer and $\delta$ a small positive number.
An upper bound for $\delta$ will be determined later in Proposition \ref{prop:open} in terms of the integer $k$.

\begin{dfn}\label{dfn:str}
Let $X$ be a locally semiconcave G-space, $B$ a normal ball, and $x\in B$.
A collection of points $p_1,\dots,p_k\in B\setminus\{x\}$ is called a \emph{$(k,\delta)$-strainer} at $x$ if it satisfies
\begin{equation}\label{eq:str}
|\angle p_ixp_j-\pi/2|<\delta
\end{equation}
for all $1\le i< j\le k$.
We call $x$ a \emph{$(k,\delta)$-strained point} and the distance map
\[f:=(d(p_1,\cdot),\dots,d(p_k,\cdot)):B\to\mathbb R^k\]
a \emph{$(k,\delta)$-strainer map} at $x$.
We also call the minimum of $|p_ix|$ the \emph{length} of this strainer.
\end{dfn}

Note that we do not assume any condition on the reversed angles $\angle p_jxp_i$, where $1\le i<j\le k$.
The following two remarks are immediate consequences of the previous lemmas.

\begin{rem}\label{rem:stropen}
By the continuity of angle in Lemma \ref{lem:conti}, if $p_1,\dots,p_k$ is a $(k,\delta)$-strainer at $x$, then it is a $(k,\delta)$-strainer at any point in a neighborhood $U$ of $x$.
In this case, we say that $p_1,\dots,p_k$ is a \emph{$(k,\delta)$-strainer} on $U$, $f$ is a \emph{$(k,\delta)$-strainer map} on $U$, and the minimum of $|p_iU|$ is the \emph{length} of this strainer.
\end{rem}

\begin{rem}\label{rem:strwell}
By the well-definedness of angle in Lemma \ref{lem:well}, if $p_1,\dots,p_k$ is a $(k,\delta)$-strainer at $x$, then $p_1',\dots,p_k'$ is also a $(k,\delta)$-strainer at $x$, where $p_i'\in B$ is an arbitrary point on (the extension of) the shortest path $xp_i$ (beyond $p_i$).
In particular, one can always extend the length of a strainer.
In this sense, a strainer here is a collection of shortest paths through $x$ satisfying the almost orthogonality condition \eqref{eq:str}, rather than a collection of points around $x$.
This is a significant difference from strainers in various settings of singular spaces \cite{BGP92,LN19,FG25,HY25}.
\end{rem}

To state the properties of a strainer map, we recall the following notions.
Let $0<\epsilon\le1$.
We say that a map $f:X\to Y$ between metric spaces is \emph{$\epsilon$-open} if for any $x\in X$ and $y'\in Y$, there exists $x'\in X$ such that
\[f(x')=y',\quad\epsilon|xx'|\le|f(x)f(x')|.\]
We also say that $f$ is \emph{$\epsilon$-bi-Lipschitz} if
\[\epsilon\le\frac{|f(x)f(x')|}{|xx'|}\le\epsilon^{-1}\]
for any distinct $x,x'\in X$.

Now we are ready to state the main results of this subsection.
Here, unless otherwise stated, we consider the Euclidean norm of $\mathbb R^k$.

\begin{prop}\label{prop:open}
Let $X$ be a locally semiconcave G-space and $B$ a normal ball.
Let $f$ be a $(k,\delta)$-strainer map at $x\in B$.
If $\delta\le\delta_k$, then $f$ is an $\epsilon_k$-open map in a neighborhood of $x$, where $\delta_k\ll\epsilon_k$ are positive constants depending only on $k$.
\end{prop}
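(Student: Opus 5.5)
We prove $\epsilon_k$-openness by the standard iterative scheme used for strainer maps in Alexandrov and GCBA spaces: at each point of a small neighborhood $U$ of $x$, we find a direction that moves $f$ roughly toward any prescribed target at speed comparable to the distance moved, and then iterate. Since the angle is continuous (Lemma \ref{lem:conti}), Remark \ref{rem:stropen} lets us shrink $U$ so that $p_1,\dots,p_k$ is a $(k,2\delta)$-strainer at every $y\in U$. By Remark \ref{rem:strwell}, we may also extend all $p_i$ so that the strainer length is large compared to $\operatorname{diam}U$; this keeps the uniform convergence of angle (Proposition \ref{prop:unif}) effective at every scale we use.

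\textbf{Step 1 (infinitesimal control of each coordinate).} At $y\in U$, let $q_i$ be a point on the extension of $p_iy$ beyond $y$. By Lemma \ref{lem:diff} and Remark \ref{rem:diff1}, moving from $y$ toward $p_i$ gives $d(p_i,\cdot)'=-1$, and moving toward $q_i$ gives $+1$. For $j\ne i$ we need the effect on $d(p_j,\cdot)$. When $j<i$ the strainer condition gives $\angle p_jyp_i\approx\pi/2$, so $d(p_j,\cdot)$ changes at rate $\approx0$ along $yp_i$; by Remark \ref{rem:diff1} the same holds along $yq_i$.

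When $j>i$, condition \eqref{eq:str} only controls $\angle p_iyp_j$, not the reversed angle. This asymmetry forces a triangular (Gram--Schmidt-like) scheme, ordered so that only the controlled angles are ever used. We correct the coordinates in the order $k,k-1,\dots,1$: moving along $yp_i$ or $yq_i$ changes $d(p_i,\cdot)$ at rate $\mp1$ and leaves $d(p_j,\cdot)$ for $j<i$ almost unchanged (error $\lesssim\delta$). The coordinates $j>i$ may change by an uncontrolled but $1$-Lipschitz amount. Going in the reverse order does not obviously help. So the plan is to fix coordinate $1$ last, in the following sense.

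\textbf{Step 2 (one step of the iteration).} Given a target $v\in\mathbb R^k$ near $f(y)$, we first move along a geodesic toward $p_k$ or $q_k$ by length $|v_k-f_k(y)|$. We then move along $p_{k-1}$ or $q_{k-1}$ to fix coordinate $k-1$, which only disturbs coordinate $k$, and so on down to coordinate $1$. This undoes the errors in coordinates $j<i$ but spoils coordinates $j>i$, so a single sweep is not contractive.

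To get contraction, I would instead build a triangular system and use the uniform estimate of Proposition \ref{prop:unif} along finite segments, not just infinitesimally. On a segment of length $\ell$ from $y$ toward $p_i$, each $d(p_j,\cdot)$ with $j<i$ changes by at most $(\delta+C\ell/L)\ell$, where $L$ is the strainer length. Then we iterate sweeps of the form "fix coordinate $1$ using $p_1$, then $2$ using $p_2$, \dots", each fix disturbing only earlier coordinates. This order is wrong, so choose the reverse order within each sweep and repeat sweeps.

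\textbf{Step 3 (main obstacle: convergence of the sweeps).} The hardest step is showing that the residual $|f(\cdot)-v|$ shrinks by a factor like $1-c_k$ per sweep while the total path length stays $\le C_k|f(y)-v|$. That gives $\epsilon_k\sim1/C_k$ and forces $\delta_k\ll\epsilon_k$. My first attempt would be the lexicographic-weighted norm $\|w\|=\sum_i M^{\,k-i}|w_i|$ with $M\gg1$, weighting so that errors pushed into controlled coordinates are dominated. If this fails because uncontrolled coordinates are too large, I would fall back on the Alexandrov-style argument: move toward a single point maximizing a function of the form $\sum_i\pm d(p_i,\cdot)$ or $\max_i(\ldots)$, using Lemma \ref{lem:sum} and Lemma \ref{lem:diff} for first-variation control. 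Once the contraction is established, completeness and local compactness give a limit point $x'$ with $f(x')=v$ and $|yx'|\le\epsilon_k^{-1}|f(y)-v|$.
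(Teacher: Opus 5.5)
Your Step 1 already has the right infinitesimal estimates. A short move from $y$ along $yp_i$, or along its extension beyond $y$, changes $f_i$ at rate $\mp1$. It changes $f_j$ for $j<i$ at rate at most $\delta$ (from the controlled angle $\angle p_jyp_i$ plus Lemma \ref{lem:diff}), and $f_j$ for $j>i$ at rate at most $1$.

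The gap is that you never turn these estimates into openness. The contraction is left as the ``main obstacle'', and the scheme you settle on in Step 2 reads Step 1 backwards. Fixing coordinate $i$ via $p_i$ spoils the \emph{earlier} coordinates $j<i$ only by $O(\delta)$ times the step length, and the \emph{later} coordinates $j>i$ by up to the full step length. So the sweep in the order $1,2,\dots,k$, which you discard as ``wrong'', is exactly the one that contracts in a single pass. Already-fixed coordinates are only $\delta$-perturbed, so the final residual is of order $2^k\delta$ times the initial one, up to higher-order terms, and the total path length is $O(2^k)$ times it. The order $k,\dots,1$ that you adopt is the one you yourself observed is not contractive per sweep. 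Hence the per-sweep contraction you name as the hardest step fails for the scheme you actually propose.

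The missing step is short, and the weighted norm you list as a first attempt in Step 3 is the right tool. Up to normalization, $\sum_iM^{k-i}|w_i|$ is the paper's $\|w\|=\sum_i\epsilon^i|w_i|$ with $\epsilon=1/M$, and no sweeps are needed. Given $y$ near $x$ and $v\neq f(y)$ near $f(y)$, pick any $i$ with $f_i(y)\neq v_i$. Take $z$ very close to $y$ on $yp_i$ if $f_i(y)>v_i$, or on its extension beyond $y$ if $f_i(y)<v_i$. Your Step 1 estimates give
\[
\|f(z)-v\|-\|f(y)-v\|\le\Bigl(\delta\sum_{j<i}\epsilon^j-\epsilon^i+\sum_{j>i}\epsilon^j\Bigr)|yz|\le-\frac{\epsilon^i}{2}|yz|
\]
as soon as $\epsilon$ is small and $\delta\ll\epsilon^{k-1}$. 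Lemma \ref{lem:open} then turns this pointwise descent directly into $(\epsilon^k/2)$-openness for $\|\cdot\|$, and hence $\epsilon_k$-openness in the Euclidean norm.

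Since $z$ may depend on $y$, this uses only the first variation formula and Lemma \ref{lem:diff}, not Proposition \ref{prop:unif}. That is why the same proof carries over to the non-differentiable setting of Section \ref{sec:top} (Remark \ref{rem:open}), where your finite-segment estimates would not be available.
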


\begin{prop}\label{prop:bilip}
Let $X$ be a locally semiconcave G-space and $B$ a normal ball.
Let $f$ be a $(k,\delta)$-strainer map at $x\in B$ and suppose that $x$ is not $(k+1,\delta)$-strained.
If $\delta\le\delta_k$, then $f$ is an open $\epsilon_k$-bi-Lipschitz embedding in a neighborhood of $x$, where $\delta_k$ and $\epsilon_k$ are the constants of Proposition \ref{prop:open}.
\end{prop}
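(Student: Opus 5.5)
The statement to prove is Proposition \ref{prop:bilip}. We already know from Proposition \ref{prop:open} that $f$ is $\epsilon_k$-open near $x$. So the remaining work is a Lipschitz lower bound $|f(y)f(y')|\ge\epsilon_k|yy'|$ for $y,y'$ near $x$. Once we have it, $f$ is injective and open, hence an open embedding, and the upper bound $|f(y)f(y')|\le\sqrt k|yy'|$ is trivial because each coordinate is $1$-Lipschitz. So the proof reduces to the lower bound.

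Fix distinct $y,y'$ in a small neighborhood $U$ of $x$, and take a point $q$ on the shortest path $yy'$ extended beyond $y'$ (or $y'$ itself). The first variation formula \eqref{eq:fv} together with the uniform convergence of angle, Proposition \ref{prop:unif}, gives
\[
\frac{|p_iy'|-|p_iy|}{|yy'|}=-\cos\angle p_iyy'+O(|yy'|/|p_iy|).
\]
The error term is uniform because the strainer has definite length. It therefore suffices to show that for every $y\in U$ and every shortest path direction $\eta$ at $y$ we have $\max_i|\cos\angle(\gamma_i,\eta)|\ge c_k$, where $\gamma_i$ is the path $yp_i$ and $c_k>0$ depends only on $k$. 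Then $|f(y)f(y')|\ge(c_k-o(1))|yy'|$, and we take $\epsilon_k\le c_k/2$ after shrinking $U$ and the constants suitably.

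The key step is this angle lower bound, and I would argue it by contradiction using the fact that $x$ is not $(k+1,\delta)$-strained. Suppose there are $y_j\to x$ and directions $\eta_j$ at $y_j$ with $|\angle p_iy_jz_j-\pi/2|\to0$ for all $i$, where $z_j$ lies on $\eta_j$. Using the extension property and Remark \ref{rem:strwell}, we can push $z_j$ to a definite distance from $y_j$, pass to a limit $z_j\to z$, and use continuity of angle (Lemma \ref{lem:conti}) to get a direction at $x$ with $\angle p_ixz=\pi/2$ for all $i$. The strainer condition \eqref{eq:str} only constrains $\angle p_ixp_j$ for $i<j$, so appending $p_{k+1}:=z$ requires exactly $|\angle p_ixz-\pi/2|<\delta$ for $i\le k$, which we now have. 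Then $p_1,\dots,p_k,z$ is a $(k+1,\delta)$-strainer at $x$, a contradiction. The compactness step needs the limit $z$ to stay in $B$ and away from $x$; choosing $|y_jz_j|$ equal to a fixed small $r$ handles this.

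The main obstacle is the quantifiers. The contradiction argument yields $c$ depending on $x$, not only on $k$, and making it depend only on $k$ conflicts with the contradiction hypothesis being about the fixed point $x$. To fix this, I would weaken the goal slightly: the angle-$\pi/2$ contradiction only rules out all $|\cos\angle p_iy\eta|<\delta$, giving $\max_i|\cos|\ge\delta/2$ for small $U$. I would then check that in Proposition \ref{prop:open} the constants are arranged so that $\epsilon_k$ may be taken $\lesssim\delta_k$ in this bi-Lipschitz estimate. Alternatively, I would use openness to control the component of $yy'$ along the strainer directions. In either case the estimate goes through the uniform first variation from Proposition \ref{prop:unif}.
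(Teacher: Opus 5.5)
Your contradiction step matches the core of the paper's proof. You take almost-orthogonal directions at points $y_j\to x$, push them to a definite length by Remark~\ref{rem:strwell}, and pass to the limit with Lemma~\ref{lem:conti} to get a $(k+1,0)$-strainer at $x$. Combined with the uniform first variation from Proposition~\ref{prop:unif}, this correctly shows that $f$ is a bi-Lipschitz embedding near $x$ with \emph{some} lower constant $c>0$.

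The gap is the constant $\epsilon_k$. It is fixed by Proposition~\ref{prop:open}, so you cannot choose it. Your direct route only gives $c$ of order $\sin\delta$, or a constant depending on $x$, and both proposed repairs fail:
\begin{itemize}
\item Proposition~\ref{prop:open} is arranged with $\delta_k\ll\epsilon_k$, so you cannot take $\epsilon_k\lesssim\delta_k$.
\item The hypothesis allows any $\delta\le\delta_k$, possibly far smaller, so a bound of order $\delta$ never reaches the fixed $\epsilon_k$.
\item The alternative ``use openness to control the component of $yy'$'' is too vague to count as an argument.
\end{itemize}

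The missing observation is that the angle argument only needs to give \emph{injectivity}; openness then supplies the constant. Suppose $f$ is injective on a neighborhood $V$ of $x$ and $\epsilon_k$-open near $x$. For $y,y'$ in a smaller neighborhood $W$, openness gives $y''$ with $f(y'')=f(y')$ and $\epsilon_k|yy''|\le|f(y)f(y')|\le\sqrt k\,|yy'|$. Hence $y''\in V$ once $W$ is small enough, and injectivity forces $y''=y'$, that is, $|f(y)f(y')|\ge\epsilon_k|yy'|$.

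This is exactly how the paper proceeds. It assumes $f(y_j)=f(z_j)$ with $y_j\neq z_j\to x$, and deduces $\angle p_iz_jy_j\to\pi/2$ from Proposition~\ref{prop:unif} as in Lemma~\ref{lem:bilip}. It then runs essentially your limiting argument. This also explains Remark~\ref{rem:bilip}: only the absence of a $(k+1,0)$-strainer at $x$ is used. Your direct co-Lipschitz route instead yields a constant that measures how far $x$ is from being $(k+1)$-strained.
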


The above theorems are analogs of the corresponding results in the setting of Alexandrov/GCBA/GNPC spaces (that are not necessarily G-spaces).
For example, compare with \cite[Proposition 5.17, Theorem 5.30]{FG25} in the GNPC setting.

\begin{rem}
The assumption of Proposition \ref{prop:bilip} is slightly weaker than the corresponding statements in the settings of singular spaces mentioned above.
In those singular settings, one usually needs to assume that \emph{every point} near $x$ is not $(k+1,\delta)$-strained.
This improvement comes from the regularity of a G-space, and is closely related to the constancy of a strainer number discussed below; see Definition \ref{dfn:strnum} and Lemma \ref{lem:strnum}.
\end{rem}

The proof of Proposition \ref{prop:open} is based on the following elementary lemma.

\begin{lem}\label{lem:open}
Let $f:X\to Y$ be a continuous map between metric spaces.
Suppose that $X$ is locally compact, $Y$ is locally geodesic, and there exists $\varepsilon>0$ satisfying the following: for every $x\in X$ and each $v\in Y\setminus\{f(x)\}$ sufficiently close to $f(x)$, there exists $y\in X\setminus\{x\}$ such that
\[|f(y)v|-|f(x)v|\le-\varepsilon|xy|.\]
Then $f$ is an $\varepsilon$-open map.
\end{lem}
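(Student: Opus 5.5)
We prove Lemma \ref{lem:open} by an iteration argument of Lyusternik--Graves type, using local compactness of $X$ to pass to a limit. Fix $x\in X$ and $y'\in Y$, and set $r:=|f(x)y'|$. Here $y'$ is assumed close enough to $f(x)$ that the hypothesis applies along the way, since $\varepsilon$-openness is a local statement. We want $x'$ with $f(x')=y'$ and $\varepsilon|xx'|\le r$.

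The plan is to consider the set
\[
S:=\{z\in X:\ |f(z)y'|+\varepsilon|xz|\le r\}.
\]
It contains $x$, and it is closed by continuity of $f$. It is also contained in the closed ball $\bar B(x,r/\varepsilon)$, which is compact once $r$ is small by local compactness. So $S$ is compact, and the continuous function $z\mapsto|f(z)y'|$ attains its minimum on $S$ at some point $z_0$. If $f(z_0)=y'$, then $x':=z_0$ works, because $\varepsilon|xz_0|\le r=|f(x)y'|$.

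Suppose instead that $f(z_0)\neq y'$. Since $Y$ is locally geodesic, choose a point $v$ on a shortest path from $f(z_0)$ to $y'$ that is close to $f(z_0)$ but different from it, so that the hypothesis applies at $z_0$ with target $v$. This gives $w\neq z_0$ with
\[
|f(w)v|\le|f(z_0)v|-\varepsilon|z_0w|.
\]
Because $v$ lies on a shortest path from $f(z_0)$ to $y'$, we get
\[
|f(w)y'|\le|f(w)v|+|vy'|\le|f(z_0)v|+|vy'|-\varepsilon|z_0w|=|f(z_0)y'|-\varepsilon|z_0w|.
\]
Combining this with $|xw|\le|xz_0|+|z_0w|$ gives $|f(w)y'|+\varepsilon|xw|\le|f(z_0)y'|+\varepsilon|xz_0|\le r$, so $w\in S$. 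But $|f(w)y'|<|f(z_0)y'|$ since $|z_0w|>0$, which contradicts the minimality of $z_0$. Hence $f(z_0)=y'$.

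The main obstacle is keeping the hypothesis applicable uniformly, since it is stated only for $v$ ``sufficiently close to $f(x)$'' and only where $Y$ is locally geodesic. This is handled by working with the intermediate point $v$ at each base point $z_0$, so that only closeness to $f(z_0)$ is needed. We also restrict to $y'$ in a small neighborhood of $f(x)$ so that the compactness of $\bar B(x,r/\varepsilon)$ and the local geodesicity near $y'$ hold. The extension to arbitrary $y'$ is interpreted locally, in the sense that this is how Proposition \ref{prop:open} uses openness in a neighborhood.
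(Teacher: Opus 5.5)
Your proof is correct. Minimizing $|f(\cdot)y'|$ over the compact sublevel set $S$, and contradicting minimality by applying the hypothesis at an intermediate point $v$ on a shortest path from $f(z_0)$ to $y'$, is the standard argument behind this lemma; the paper gives no proof of its own and defers to \cite[Lemma 5.15]{FG25} and \cite{Ly05}. Your local reading of $\varepsilon$-openness is not a weakness but a necessity, since the literal global version fails: the inclusion $(0,1)\hookrightarrow\mathbb R$ satisfies the hypotheses with $\varepsilon=1$ but is not surjective. The local form is also exactly what Proposition \ref{prop:open} uses.
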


For the proof, see \cite[Lemma 5.15]{FG25} (see also \cite{Ly05} for a more general statement).

\begin{proof}[Proof of Proposition \ref{prop:open}]
The proof is essentially the same as in the GNPC case, \cite[Proposition 5.17]{FG25}.
For $v=(v_1,\dots,v_k)\in\mathbb R^k$, we introduce a modified norm
\[||v||:=\sum_{i=1}^k\epsilon^i|v_i|,\]
where $\epsilon$ is a positive constant depending only on $k$, which will be defined below.

By Remark \ref{rem:stropen}, we may assume that $f$ is a $(k,\delta)$-strainer map on a neighborhood $U$ of $x$.
For any $y\in U$ and $v\in\mathbb R^k\setminus\{f(y)\}$ sufficiently close to $f(y)$, we show that there exists $z\in U$ such that
\begin{equation}\label{eq:open}
||f(z)-v||-||f(y)-v||\le-\epsilon'|yz|,
\end{equation}
where $\epsilon'$ is another positive constant depending only on $k$.
Then by Lemma \ref{lem:open}, $f$ is $\epsilon'$-open with respect to $||\cdot||$.
Replacing the modified norm with the Euclidean one, we obtain the claim.

Let us show \eqref{eq:open}.
Since $v\neq f(y)$, there is some $1\le i\le k$ such that $f_i(y)\neq v_i$.
Fix such $i$.
If $f_i(y)>v_i$ (resp.\ $f_i(y)<v_i$), we choose $z$ to be a point sufficiently close to $y$ on the shortest path $p_iy$ (resp.\ on the extension of $p_iy$ beyond $y$) such that $f_i(y)>f_i(z)>v_i$ (resp.\ $f_i(y)<f_i(z)<v_i$).
Then we have
\[
|f_j(z)-v_j|-|f_j(y)-v_j|\le
\begin{cases}
\delta|yz| & j<i \\
-|yz |& j=i \\
|yz| & j>i
\end{cases}.
\]
Here the first inequality follows from the almost orthogonality \eqref{eq:str}, the first variation formula \eqref{eq:fv} (provided $z$ is close to $y$), and the local differentiability as in Lemma \ref{lem:diff} (when $z$ is beyond $y$).
The second inequality follows from the choice of $z$ and the third inequality follows from the $1$-Lipschitz property of $f$.

The above inequalities imply
\begin{align*}
||f(z)-v||-||f(y)-v||&\le\sum_{j=1}^{i-1}\epsilon^j\delta|yz|-\epsilon^i|yz|+\sum_{j=i+1}^k\epsilon^j|yz|\\
&\le-(\epsilon^i/2)|yz|,
\end{align*}
provided $\delta\ll\epsilon$ are small enough compared to $k$.
This completes the proof.
\end{proof}

\begin{rem}\label{rem:open}
The above proof does not need the uniform convergence of angle, Proposition \ref{prop:unif}.
Indeed, in view of the first variation formula \eqref{eq:fv}, it is enough to choose $z$ close to $y$, depending on $y$.
This observation will be important for the proof of Theorem \ref{thm:top} in Section \ref{sec:top}.
\end{rem}

The proof of Proposition \ref{prop:bilip} is based on the following lemma.
Strictly speaking, the one used later is slightly different, but it is written this way for comparison with similar lemmas in various general settings, i.e., Alexandrov/GCBA/GNPC spaces that are not necessarily G-spaces.

\begin{lem}\label{lem:bilip}
Let $X$ be a locally semiconcave G-space and $B$ a normal ball.
Let $p_1,\dots,p_k\in B$ be a $(k,\delta)$-strainer at $x\in B$.
Suppose that there exist $y,z\in B$ sufficiently close to $x$ such that
\[||p_iy|-|p_iz||<\delta'|yz|\]
for all $1\le i\le k$, where $\delta'\ll\delta$ is a positive constant depending only on $\delta$.
Then $p_1,\dots,p_k,y$ is a $(k+1,\delta)$-strainer at $z$.
\end{lem}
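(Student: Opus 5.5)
The plan is to verify the defining inequality \eqref{eq:str} at $z$ for the enlarged collection $p_1,\dots,p_k,y$. For the pairs $i<j\le k$ this is already known. Since $y,z$ are sufficiently close to $x$, Remark \ref{rem:stropen} (continuity of angle, Lemma \ref{lem:conti}) shows that $p_1,\dots,p_k$ is still a $(k,\delta)$-strainer at $z$. The definition of a strainer only constrains $\angle p_izp_j$ for $i<j$, so with $y$ placed last the only new conditions are
\[|\angle p_izy-\pi/2|<\delta\qquad(1\le i\le k).\]
No condition on the reversed angles $\angle yzp_i$ is needed, which is what makes this lemma accessible. We also note that the hypothesis forces $y\ne z$; otherwise it would read $0<0$.

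To control $\angle p_izy$, I would pass through the comparison angle. By the Euclidean law of cosines, as in the proof of Proposition \ref{prop:unif},
\[\left|\cos\tilde\angle p_izy-\frac{|p_iz|-|p_iy|}{|zy|}\right|\le \frac{s_i}{2},\qquad s_i:=|zy|/|p_iz|.\]
Together with the hypothesis $||p_iy|-|p_iz||<\delta'|yz|$, this gives $|\cos\tilde\angle p_izy|<\delta'+s_i/2$.

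Next, the uniform convergence of angle, Proposition \ref{prop:unif}, gives $|\cos\angle p_izy-\cos\tilde\angle p_izy|<10\lambda s_i$. Hence
\[|\cos\angle p_izy|<\delta'+11\lambda s_i .\]
The point $x$ is fixed with $|p_ix|>0$, so $|p_iz|$ is bounded below by $|p_ix|/2$ for $z$ near $x$. Also $|zy|\le|zx|+|xy|$ is small, so $s_i$ can be made as small as we like by taking $y,z$ close enough to $x$; this is the meaning of ``sufficiently close'' in the statement. Choosing $\delta'\ll\delta$ and the neighborhood small, we obtain $|\cos\angle p_izy|<\sin\delta$, that is, $|\angle p_izy-\pi/2|<\delta$.

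The only real step is the passage from the comparison angle to the actual angle at a point $z$ with the second point $y$ not infinitesimally close to $z$. This is exactly where the uniformity in Proposition \ref{prop:unif} is essential; the pointwise Definition \ref{dfn:ang} would not suffice, since $y$ varies with $z$. Everything else is routine bookkeeping of constants.
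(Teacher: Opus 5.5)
Your proposal is correct and follows essentially the same route as the paper: use Remark \ref{rem:stropen} to keep $p_1,\dots,p_k$ a strainer at $z$, reduce to $|\angle p_izy-\pi/2|<\delta$, and then combine the uniform convergence of angle (Proposition \ref{prop:unif}) with the hypothesis $||p_iy|-|p_iz||<\delta'|yz|$, which puts the comparison angle $\tilde\angle p_izy$ near $\pi/2$. The only difference is that you make the estimates explicit at the level of cosines, via the law-of-cosines bound from the proof of Proposition \ref{prop:unif}, whereas the paper states them qualitatively.
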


In contrast to Remark \ref{rem:open}, the proof of Lemma \ref{lem:bilip} requires the uniform convergence of angle, Proposition \ref{prop:unif}, to ensure the arbitrary choice of $y$ and $z$.
This is a crucial difference from the GNPC case \cite[Proposition 5.7]{FG25}; see also Lemma \ref{lem:bilip'} and Remark \ref{rem:gnpc}.

\begin{proof}[Proof of Lemma \ref{lem:bilip}]
By the openness of a strainer, Remark \ref{rem:stropen}, we may assume that $p_1,\dots,p_k$ is a $(k,\delta)$-strainer at $z$.
Hence it remains to show that
\[|\angle p_izy-\pi/2|<\delta\]
for any $1\le i\le k$.
Proposition \ref{prop:unif} implies that $\angle p_izy$ is $(\delta/2)$-close to $\tilde\angle p_izy$, provided that $y,z$ are sufficiently close to $x$ (depending only on the local semiconcavity constant and the length of the strainer, see Definition \ref{dfn:str}).
Furthermore, the assumption that $||p_iy|-|p_iz||<\delta'|yz|$ implies that $\tilde\angle p_izy$ is $(\delta/2)$-close to $\pi/2$, provided $\delta'\ll\delta$.
This completes the proof.
\end{proof}

\begin{proof}[Proof of Proposition \ref{prop:bilip}]
Since $f$ consists of distance functions, it is $\sqrt k$-Lipschitz (with respect to the Euclidean norm).
By Proposition \ref{prop:open}, $f$ is $\epsilon_k$-open.
Hence it remains to show that $f$ is locally injective around $x$ (see \cite[Proof of Theorem 5.30]{FG25} for more details).

Suppose that $f$ is not locally injective around $x$.
Then there exist $y_j,z_j\in B$ converging to $x$ such that $f(y_j)=f(z_j)$.
By the proof of Lemma \ref{lem:bilip}, we see that
\[\lim_{j\to\infty}\angle p_iz_jy_j=\pi/2\]
for any $1\le i\le k$.
By the extension property of a strainer in Remark \ref{rem:strwell}, we may replace $y_j$ by $y_j'$ which has the same property as above and whose distance to $z_j$ is uniformly bounded below by a positive number.
Passing to a subsequence and using the continuity of angle in Lemma \ref{lem:conti}, we may assume that $y_j'$ converges to some $p_{k+1}\neq x$ such that $\angle p_ixp_{k+1}=\pi/2$ for any $1\le i\le k$.
In particular, $p_1,\dots,p_k,p_{k+1}$ is a $(k+1,\delta)$-strainer at $x$, which contradicts the assumption.
\end{proof}

We will use Proposition \ref{prop:bilip} to introduce an atlas for a locally semiconcave G-space.
To this end, we need the following definition and lemma.
By a \emph{$(k,0)$-strainer}, we mean a $(k,\delta)$-strainer for all $\delta>0$, i.e., $\angle p_ixp_j=\pi/2$ instead of \eqref{eq:str}.

\begin{dfn}\label{dfn:strnum}
Let $X$ be a locally semiconcave G-space.
A \emph{local strainer number} at $x\in X$ is the supremum of $k$ such that $x$ is a $(k,0)$-strained point.
\end{dfn}

\begin{lem}\label{lem:strnum}
The local strainer number of a locally semiconcave G-space $X$ is finite and constant.
In other words, there exists an integer $n$ such that every point of $X$ is $(n,0)$-strained.
\end{lem}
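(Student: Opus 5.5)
The proof has three steps: finiteness, constancy via local monotonicity and connectedness, and an upper bound using the bi-Lipschitz chart of Proposition \ref{prop:bilip}.

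\textbf{Finiteness.} Fix $x$ in a normal ball $B$ and suppose $p_1,\dots,p_k$ is a $(k,0)$-strainer at $x$. By Remark \ref{rem:strwell}, all $p_i$ may be taken at a common distance $r$ from $x$, with $r$ fixed by $B$.

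Consider the points $\gamma_i(t)$ on the shortest paths $xp_i$, together with the points $\gamma_{-i}(t)$ on their extensions beyond $x$, for a small $t$. Proposition \ref{prop:unif} makes comparison angles at $x$ close to genuine angles once $t\ll r$. Combined with Lemma \ref{lem:diff} and Remark \ref{rem:diff1}, this gives that the $2k$ points $\gamma_{\pm i}(t)$ are pairwise at distance at least $c\,t$, for a universal $c>0$.

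To get this separation I use the ordered strainer condition. For $i<j$, the distance $|\gamma_i(t)\gamma_j(t)|$ is controlled through $d(p_i,\gamma_j(t))-d(p_i,x)=o(t)$, which uses only the angle $\angle p_ixp_j$. So no symmetry of angles is needed.

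Local compactness bounds the number of $ct$-separated points in the annulus $\bar B(x,t)\setminus B(x,t/2)$. The bound must be uniform as $t\to0$, so the plan is to avoid doubling arguments and use topology instead. By Proposition \ref{prop:open}, a $(k,\delta)$-strainer map is an $\epsilon_k$-open map to $\mathbb R^k$ near $x$, so its image contains a ball in $\mathbb R^k$. Since $f$ is Lipschitz, $\dim X\ge k$ near $x$. Hence the local strainer number is finite provided $X$ is finite-dimensional, and this is the first obstacle, since finite-dimensionality of G-spaces is Busemann's open conjecture.

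I would try to bypass it as follows. Take $k$ maximal among $(k,\delta)$-strainers at $x$, with $\delta=\delta_k$ as $k$ grows. Here I expect a packing argument in a compact normal ball, using the $ct$-separated points $\gamma_{\pm i}(t)$ for fixed small $t$ and the angle continuity of Lemma \ref{lem:conti}, to yield a uniform bound at each point. Finiteness then follows pointwise: $(k,0)$-strainers would give arbitrarily many separated points in the compact set $\bar B(x,r)$ as $k\to\infty$, contradicting compactness once $t$ is fixed. Strictly, compactness allows arbitrarily many points only if they are not uniformly separated; here they are uniformly $ct$-separated, so there is a contradiction.

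\textbf{Constancy.} Let $n(x)$ be the local strainer number at $x$, so $x$ carries a $(n(x),0)$-strainer but no $(n(x)+1,\delta)$-strainer for small $\delta$. The second part needs care; I would argue it by compactness, using Lemma \ref{lem:conti} to pass strainers to limits.

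Proposition \ref{prop:bilip} then gives a bi-Lipschitz open embedding of a neighborhood of $x$ into $\mathbb R^{n(x)}$, so $X$ is an $n(x)$-manifold near $x$. Invariance of domain makes $n$ locally constant where these charts overlap, and connectedness of $X$ (assumed, or argued componentwise) gives global constancy.

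The remaining gap is to show that a nearby point $y$ still carries a $(n(x),0)$-strainer rather than merely a $(n(x),\delta)$-strainer. For this I would take a $(n,\delta)$-strainer at $y$ (Remark \ref{rem:stropen}) and, within the $n$-dimensional chart, adjust the points by continuity of angles and an intermediate value or Gram--Schmidt-type argument along extensions to get exact orthogonality.

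I expect this exact-orthogonality correction, together with the uniformity in finiteness, to be the main obstacle.
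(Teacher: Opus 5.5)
Your proof is correct and, as written, already complete. The two obstacles you flag are not real.

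\textbf{Constancy.} This is argued exactly as in the paper. A maximal $(n(x),0)$-strainer gives an open bi-Lipschitz embedding of a neighborhood of $x$ into $\mathbb R^{n(x)}$. By Remark \ref{rem:bilip}, Proposition \ref{prop:bilip} holds with $\delta=0$, so your compactness step excluding $(n(x)+1,\delta)$-strainers is not even needed. Invariance of domain plus connectedness then finishes; connectedness is automatic because $X$ is geodesic.

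Your ``remaining gap'' about a nearby point $y$ is already closed by this same argument. Your finiteness argument applies at $y$ too, so $y$ carries a maximal $(n(y),0)$-strainer and has its own chart into $\mathbb R^{n(y)}$. It also lies in the chart domain around $x$, so invariance of domain forces $n(y)=n(x)$, and $y$ is $(n(x),0)$-strained. No exact-orthogonality correction is needed, which is fortunate, since a Gram--Schmidt-type procedure is hard to make sense of for these asymmetric angles.

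Similarly, finiteness needs neither finite-dimensionality of $X$ nor any uniformity as $t\to0$. One fixed $t$ suffices, as your last finiteness paragraph already says.

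\textbf{Finiteness: where you differ from the paper.} The paper's argument is soft:
\begin{itemize}
\item Assume $(k,0)$-strainers at $x$ for all $k$, extended to lengths bounded below.
\item A diagonal argument and continuity of angle (Lemma \ref{lem:conti}) produce an infinite sequence $p_1,p_2,\dots$ with $\angle p_ixp_j=\pi/2$ for all $i<j$.
\item A subsequence converges to some $p_\infty\neq x$, and passing to the limit gives $\angle p_\infty xp_\infty=\pi/2$. This is absurd, since that angle is $0$.
\end{itemize}

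You instead use the explicit estimate of Proposition \ref{prop:unif}. With $|p_ix|=r$ and $\angle p_ixp_j=\pi/2$ for $i<j$, it gives $|p_i\gamma_j(t)|\ge r-C\lambda t^2/r$. Hence $|\gamma_i(t)\gamma_j(t)|\ge t/2$ once $t\le r/(C\lambda)$, uniformly in $k$, and total boundedness of $\bar B(x,t)$ bounds $k$. The extension points $\gamma_{-i}(t)$ are not needed for this.

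The paper's route uses only qualitative continuity of angle. Yours uses the uniform estimate, but in return gives an explicit bound on the strainer number at $x$: a $t/2$-packing number of $\bar B(x,t)$ with $t\sim r/\lambda$.
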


Before the proof, one more remark.

\begin{rem}\label{rem:bilip}
With the above terminology, Proposition \ref{prop:bilip} is valid for $\delta=0$.
This directly follows from the above proof.
The reason we do not primarily use a $(k,0)$-strainer is because it is not an open condition.
\end{rem}

\begin{proof}[Proof of Lemma \ref{lem:strnum}]
We first show the finiteness.
Suppose that there exists $x\in X$ for which the local strainer number is infinite.
By definition, for any positive integer $k$, there exists a $(k,0)$-strainer $p_1^k,\dots,p_k^k$ at $x$.
By the extension property of a strainer in Remark \ref{rem:strwell}, we may assume that $|p_i^kx|$ is uniformly bounded below by a positive number.
Using a diagonal argument and the continuity of angle in Lemma \ref{lem:conti}, one can find an infinite sequence of points $p_1,p_2,\ldots\in B\setminus\{x\}$ such that
\[\angle p_ixp_j=\pi/2\]
for all $i<j$.
Passing to a subsequence, we may assume that $p_i$ converges to $p_\infty\neq x$.
However, taking $i,j\to\infty$ in the above equality gives $\angle p_\infty xp_\infty=\pi/2$, which is a contradiction.

Next we show the constancy.
Suppose that the local strainer number at $x\in X$ is $k$.
Then by Proposition \ref{prop:bilip} and Remark \ref{rem:bilip}, there exists a neighborhood of $x$ that is homeomorphic to $\mathbb R^k$.
Since $X$ is connected, such $k$ must be unique.
\end{proof}

Let $X$ be a locally semiconcave G-space.
Lemma \ref{lem:strnum} allows us to define the \emph{strainer number} of $X$ by the local strainer number at an arbitrary point of $X$.

\begin{dfn}\label{dfn:strch}
Let $X$ be a locally semiconcave G-space with strainer number $n$.
A \emph{strainer chart} around $x\in X$ is an $(n,\delta_n)$-strainer map on an open neighborhood of $x$ that is an open bi-Lipschitz embedding to $\mathbb R^n$ as in  Proposition \ref{prop:bilip}.
\end{dfn}

We conclude this subsection with the following summary.

\begin{cor}\label{cor:str}
Let $X$ be a locally semiconcave G-space with strainer number $n$.
Then every point of $X$ admits a strainer chart of dimension $n$.
In particular, $X$ is a Lipschitz manifold of dimension $n$.
\end{cor}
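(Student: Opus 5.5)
The statement combines Lemma \ref{lem:strnum} with Proposition \ref{prop:bilip}, so the plan is mainly careful bookkeeping.

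Fix $x\in X$ and a normal ball $B$ around $x$. By Lemma \ref{lem:strnum}, $x$ admits an $(n,0)$-strainer $p_1,\dots,p_n\in B\setminus\{x\}$. In particular it is an $(n,\delta_n)$-strainer at $x$, and by Remark \ref{rem:stropen} it remains one on a neighborhood $U$ of $x$.

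Next I show that $x$ is not $(n+1,\delta_n)$-strained. The natural route is to argue by contradiction from the invariance of domain obstruction. Suppose $x$ admits an $(n+1,\delta_n)$-strainer. By Proposition \ref{prop:open}, the corresponding strainer map $\mathbb R^{n+1}$-valued is open on a neighborhood of $x$. On the other hand, Proposition \ref{prop:bilip} with Remark \ref{rem:bilip}, applied to the $(n,0)$-strainer at $x$ (which is not $(n+1,0)$-strained by the definition of $n$), shows that a neighborhood of $x$ is bi-Lipschitz homeomorphic to an open subset of $\mathbb R^n$. A Lipschitz open map from an $n$-dimensional set onto an open subset of $\mathbb R^{n+1}$ is impossible by dimension (Hausdorff dimension is not increased by Lipschitz maps). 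Hence $x$ is not $(n+1,\delta_n)$-strained.

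Now Proposition \ref{prop:bilip} applies to the $(n,\delta_n)$-strainer map $f=(d(p_1,\cdot),\dots,d(p_n,\cdot))$. It shows that $f$ is an open $\epsilon_n$-bi-Lipschitz embedding on a neighborhood $V\subset U$ of $x$, so $f|_V$ is a strainer chart in the sense of Definition \ref{dfn:strch}. The images $f(V)$ are open in $\mathbb R^n$ and these charts cover $X$, so every transition map $f'\circ f^{-1}$ is a composition of bi-Lipschitz maps and is therefore locally bi-Lipschitz. This gives the Lipschitz manifold structure of dimension $n$.

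The main obstacle is the second step. Definition \ref{dfn:strch} demands an $(n,\delta_n)$-strainer chart, whereas Remark \ref{rem:bilip} only gives the $\delta=0$ version directly. We must exclude $(n+1,\delta_n)$-strainers using only the maximality of $(n,0)$-strainers, and the dimension argument above is what achieves this.
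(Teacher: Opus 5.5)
Your conclusion is correct and you use the same ingredients as the paper, which states the corollary as a summary of Lemma~\ref{lem:strnum}, Proposition~\ref{prop:bilip} and Remark~\ref{rem:bilip}. However, the step you call the main obstacle is flawed as written, and it is also unnecessary.

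To apply Proposition~\ref{prop:open} to a hypothetical $(n+1,\delta_n)$-strainer, you need $\delta_n\le\delta_{n+1}$, and nothing provides this. The proof of Proposition~\ref{prop:open} needs roughly $\delta\lesssim\epsilon^{k-1}$ to absorb the errors $\sum_{j<i}\epsilon^j\delta$, so the admissible thresholds $\delta_k$ shrink as $k$ grows. Hence you get no openness of the $\mathbb R^{n+1}$-valued map. Your Hausdorff-dimension argument therefore only excludes $(n+1,\delta)$-strainers with $\delta\le\delta_{n+1}$, not $(n+1,\delta_n)$-strainers.

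The repair is to drop this detour. You already observed that Remark~\ref{rem:bilip} makes $f=(d(p_1,\cdot),\dots,d(p_n,\cdot))$ an open $\epsilon_n$-bi-Lipschitz embedding on a neighborhood $V$ of $x$. That uses only that $x$ is $(n,0)$-strained but not $(n+1,0)$-strained, which is exactly what the constancy in Lemma~\ref{lem:strnum} gives. An $(n,0)$-strainer is in particular an $(n,\delta_n)$-strainer, so Remark~\ref{rem:stropen} makes $f$ an $(n,\delta_n)$-strainer map on a neighborhood $U$ of $x$. Then $f|_{U\cap V}$ is already a strainer chart: Definition~\ref{dfn:strch} asks for an $(n,\delta_n)$-strainer map that is an open bi-Lipschitz embedding, not for the hypothesis of Proposition~\ref{prop:bilip} at accuracy $\delta_n$. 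This is the paper's implicit argument, and your final application of Proposition~\ref{prop:bilip} merely re-derives what you already had.

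If you want the stronger fact that no point is $(n+1,\delta)$-strained, your dimension argument gives it once $\delta_n$ is replaced by $\min\{\delta_n,\delta_{n+1}\}$. The rest of your proof, including the Lipschitz-manifold conclusion, is fine.
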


\subsection{Strainer curves}\label{sec:cur}

Here we establish important technical results, which will be used to introduce a $C^{1,1/2}$ and DC atlas and a Finsler metric in the next two subsections.
Since it is unclear whether our G-space has a nice tangent cone as in the Alexandrov/CAT case (or more generally the Busemann concave/convex case), its ``tangent vector'' requires a slightly careful treatment.

We begin with the definition of a strainer curve in a locally semiconcave G-space.
In fact, it is just a pull-back of an affine geodesic by a strainer chart.

\begin{dfn}\label{dfn:cur}
Let $X$ be a locally semiconcave G-space and $B$ a normal ball.
Let $f:U\to\mathbb R^n$ be a strainer chart as in Definition \ref{dfn:strch}, where $U\subset B$ is an open subset.
For any $x\in U$ and $v\in\mathbb R^n$ ($=T_{f(x)}\mathbb R^n$), we set
\[c(t):=f^{-1}(f(x)+tv)\]
and call it a \emph{strainer curve} passing through $x$ in the direction $v$ (or more precisely, in the direction ``$df^{-1}(v)$'', which is not yet defined).
\end{dfn}

This subsection is devoted to the study of the basic properties of a strainer curve.
We prove that any strainer curve has well-defined speed and tangent direction in some suitable sense.

The following lemma is the key result.
The proof is an application of the uniform convergence of angle, Proposition \ref{prop:unif}, and the local differentiability, Lemma \ref{lem:diff}.

\begin{lem}\label{lem:cur1}
Let $c(t)=f^{-1}(f(x)+tv)$ be a strainer curve as in Definition \ref{dfn:cur}.
For any $0<|s|<t\ll1$, let $c_s(t)$ denote a point on the extension of the shortest path $xc(s)$ that lies on the same side with $c(t)$ (defined by the sign of $s$) such that
\[|x,c_s(t)|=(t/|s|)|x,c(s)|\]
(see Figures \ref{fig:cur1} and \ref{fig:cur2}).
Then we have
\[|c(t),c_s(t)|<\delta(t)t|v|,\]
where $\delta(t)=Ct|v|$ and $C$ is a positive constant depending only on the local semiconcavity constant on $U$, the strainer number of $X$, and the strainer length of $f$ (see Definition \ref{dfn:str}).
\end{lem}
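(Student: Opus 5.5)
Since $f$ is a bi-Lipschitz embedding near $x$ (Proposition \ref{prop:bilip}), I would reduce the claim to a coordinate estimate: it suffices to show
\[|f(c(t))-f(c_s(t))|\le C t^2|v|^2,\]
where $C$ depends only on the semiconcavity constant $\lambda$, the strainer number $n$, and the strainer length $L:=\min_i|p_ix|$. Bi-Lipschitzness then gives $|c(t),c_s(t)|\le \epsilon_n^{-1}C t^2|v|^2$. By definition $f(c(t))=f(x)+tv$, so I only need to control each coordinate $f_i(c_s(t))=|p_i,c_s(t)|$ against $f_i(x)+tv_i$.

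The key step is a uniform second-order expansion of $|p_i,\cdot|$ along the shortest path $\gamma$ from $x$ through $c(s)$ to $c_s(t)$. Parametrize $\gamma$ by arclength and set $\ell:=|x,c(s)|$ and $T:=(t/|s|)\ell$. Both are $\lesssim t|v|$ by the Lipschitz bounds on $f^{-1}$, and the point $c_s(t)$ exists in $B$ for $t\ll1$ by the extension property. I would write $\theta_i:=\angle p_ix\,c(s)$, which equals the angle with any point of $\gamma$ by Lemma \ref{lem:well}. For $0<r\le T$, the explicit form of Proposition \ref{prop:unif} applied to $p_i,x,\gamma(r)$ gives
\[\big||p_i\gamma(r)|-|p_ix|+r\cos\theta_i\big|\le C_1 r^2/L.\]
This follows by combining $|\cos\angle-\cos\tilde\angle|<10\lambda r/|p_ix|$ with the law-of-cosines estimate $|\cos\tilde\angle p_ix\gamma(r)-(|p_ix|-|p_i\gamma(r)|)/r|\le r/2|p_ix|$ used in its proof. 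The same formula applies at both $r=\ell$ and $r=T$, with the \emph{same} $\theta_i$.

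From this the linear term is pinned down by the chart. At $r=\ell$ we have $|p_i\gamma(\ell)|-|p_ix|=f_i(c(s))-f_i(x)=sv_i$, so $-\ell\cos\theta_i=sv_i+O(\ell^2/L)$. At $r=T=(t/|s|)\ell$, multiplying by $t/|s|$ and using the sign convention (so that $t/|s|\cdot s\, v_i$ becomes $t v_i$ on the side of $c(t)$) gives
\[f_i(c_s(t))-f_i(x)=tv_i+O\big(T^2/L+(t/|s|)\ell^2/L\big).\]
Since $\ell\le |s|\,|v|/\epsilon_n$, the second error term is $(t/|s|)\ell^2\lesssim t|s||v|^2\le t^2|v|^2$, and the first is $T^2\lesssim t^2|v|^2$. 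Summing over $i\le n$ gives the coordinate estimate and hence the lemma.

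The main obstacle is the case $s<0$, where $c_s(t)$ lies on the extension of $x c(s)$ \emph{beyond} $c(s)$, on the other side of $x$ from the segment direction. There the angle along the extended ray is $\pi-\theta_i$, by Lemma \ref{lem:diff} and Lemma \ref{lem:well}, which is exactly what makes the sign bookkeeping consistent. I expect this case to be handled by that identity, but the constants $10\lambda$ and the dependence on $L$ (which requires staying well inside $B$ for $t\ll L$) need care. Otherwise the argument is just the uniformity in Proposition \ref{prop:unif}: it is essential that the error does not depend on the point $\gamma(r)$.
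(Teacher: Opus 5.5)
Your proposal is correct and is essentially the paper's proof. The paper also reduces to a coordinate estimate via the bi-Lipschitz property of $f$. It applies the explicit form of Proposition~\ref{prop:unif} at the two scales $|x,c(s)|$ and $|x,c_s(t)|$ to compare the normalized increments of each $f_i$; your uniform expansion around the common angle $\theta_i$ is the same computation. It also handles $s<0$ through Lemma~\ref{lem:diff}, as you anticipated.

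One correction to your wording: for $s<0$ the point $c_s(t)$ lies on the extension of $c(s)x$ beyond $x$, not beyond $c(s)$. So you expand along the opposite ray with $\angle p_ixc_s(t)=\pi-\theta_i$. This gives $f_i(c_s(t))-f_i(x)=(t/|s|)\ell\cos\theta_i+O(T^2/L)=tv_i+O(t^2|v|^2/L)$.
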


\begin{figure}[ht]
\centering
\begin{tikzpicture}
\draw (0,0) .. controls (2,0.5) and (4,0.5) .. (6,0)
    coordinate[pos=4/11] (x)
    coordinate[pos=7/11] (cs)
    coordinate[pos=10/11] (ct);

\fill (x) circle (1.5pt) node[below] {$x$};
\fill (cs) circle (1.5pt) node[below] {$c(s)$};
\fill (ct) circle (1.5pt) node[below] {$c(t)$};

\coordinate[label=above:$c_s(t)$](ct')at($(x)!2!(cs)$);
\fill (ct') circle (1.5pt);

\draw[dashed](x)to(ct');
\draw[dotted](ct)to(ct');
\end{tikzpicture}
\caption{The $s>0$ case}\label{fig:cur1}
\end{figure}

\begin{figure}[ht]
\centering
\begin{tikzpicture}
\draw (0,0) .. controls (2,0.5) and (4,0.5) .. (6,0)
    coordinate[pos=1/11] (cs)
    coordinate[pos=4/11] (x)
    coordinate[pos=10/11] (ct);

\fill (x) circle (1.5pt) node[below] {$x$};
\fill (cs) circle (1.5pt) node[below] {$c(s)$};
\fill (ct) circle (1.5pt) node[below] {$c(t)$};

\coordinate[label=above:$c_s(t)$](ct')at($(cs)!3!(x)$);
\fill (ct') circle (1.5pt);

\draw[dashed](x)to(ct');
\draw[dotted](ct)to(ct');
\end{tikzpicture}
\caption{The $s<0$ case}\label{fig:cur2}
\end{figure}

As mentioned earlier, the explicit form of the error function will be used in the proof of the $1/2$-H\"older continuity of the Finsler structure in Theorem \ref{thm:conc}.
On the other hand, the implicit form will be reused later in the proof of Theorem \ref{thm:conv} in Section \ref{sec:conv}.
This is why we stick to the implicit form throughout this subsection.

\begin{proof}[Proof of Lemma \ref{lem:cur1}]
In what follows, we abuse the symbol $\delta(t)$ to denote various linear functions $Ct|v|$, whose coefficients $C>0$ satisfy the dependence described in the statement of Lemma \ref{lem:cur1}.

We first consider the $s>0$ case (see Figure \ref{fig:cur1}).
By definition
\[f(c(t))=f(x)+tv,\quad f(c(s))=f(x)+sv,\]
which implies
\begin{equation}\label{eq:cur1-1}
(t/s)(f(c(s))-f(x))-(f(c(t))-f(x))=0.
\end{equation}
Suppose $f=(d(p_1,\cdot),\dots,d(p_n,\cdot))$.
Since $c_s(t)$ is on the extension of the shortest path $xc(s)$, by applying the uniform convergence of angle in Proposition \ref{prop:unif} (the explicit version) twice, we have
\begin{equation}\label{eq:cur1-2}
\left|\frac{|p_i,c(s)|-|p_ix|}{|x,c(s)|}-\frac{|p_i,c_s(t)|-|p_ix|}{|x,c_s(t)|}\right|<100\lambda\frac{|x,c_s(t)|}{|p_ix|},
\end{equation}
where $\lambda\ge1$ is the local semiconcavity constant on $U$.
Since $f$ is $\epsilon_n$-bi-Lipschitz by Proposition \ref{prop:bilip} and $s>0$, we have
\begin{equation}\label{eq:cur1-3}
|x,c_s(t)|=(t/s)|xc(s)|\le\epsilon_n^{-1}t|v|.
\end{equation}
Combining \eqref{eq:cur1-2} and \eqref{eq:cur1-3}, we get
\begin{equation}\label{eq:cur1-4}
\left|\frac{f(c(s))-f(x)}{|x,c(s)|}-\frac{f(c_s(t))-f(x)}{|x,c_s(t)|}\right|<\delta(t),
\end{equation}
where $\delta(t)=Ct|v|$ as in the statement of Lemma \ref{lem:cur1}.
Combining \eqref{eq:cur1-3} with \eqref{eq:cur1-4} again, we have
\begin{equation}\label{eq:cur1-5}
\left|(t/s)(f(c(s))-f(x))-(f(c_s(t))-f(x))\right|<\delta(t)t|v|.
\end{equation}
Combining \eqref{eq:cur1-1} and \eqref{eq:cur1-5}, we obtain
\[|f(c(t))-f(c_s(t))|<\delta(t)t|v|.\]
Since $f$ is $\epsilon_n$-bi-Lipschitz, this shows the claim.

Next we consider the $s<0$ case (see Figure \ref{fig:cur2}).
The equality \eqref{eq:cur1-1} remains true.
Instead of \eqref{eq:cur1-4}, additionally using the local differentiability, Lemma \ref{lem:diff}, we get
\[\left|-\frac{f(c(s))-f(x)}{|x,c(s)|}-\frac{f(c_s(t))-f(x)}{|x,c_s(t)|}\right|<\delta(t).\]
This, together with $|x,c_s(t)|=(-t/s)|x,c(s)|$, implies \eqref{eq:cur1-5}.
The desired inequality now follows from \eqref{eq:cur1-1} and \eqref{eq:cur1-5} as before.
This completes the proof.
\end{proof}

Using the above lemma, we first show that every strainer curve has a well-defined speed.

\begin{lem}\label{lem:cur2}
Let $c(t)=f^{-1}(f(x)+tv)$ be a strainer curve as in Definition \ref{dfn:cur}.
Then the limit $|\dot c(0)|:=\lim_{t\to0}|x,c(t)|/|t|$ exists and further satisfies
\[\left||\dot c(0)|-\frac{|x,c(t)|}{|t|}\right|<\delta(t)|v|\]
for any small $t$ (possibly negative), where $\delta(t)=C|t||v|$ as in Lemma \ref{lem:cur1}.
In particular, $|\dot c(0)|$ depends continuously on $x$ and $v$.
\end{lem}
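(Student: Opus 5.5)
The plan is to derive a Cauchy-type estimate for the ratios $|x,c(t)|/|t|$ from Lemma \ref{lem:cur1}, and then pass to the limit. Set $\phi(t):=|x,c(t)|/|t|$ for small $t\neq0$. Fix $0<|s|<t\ll1$. By construction of $c_s(t)$ we have $|x,c_s(t)|/t=|x,c(s)|/|s|=\phi(s)$. The triangle inequality together with Lemma \ref{lem:cur1} gives
\[
\bigl||x,c(t)|-|x,c_s(t)|\bigr|\le|c(t),c_s(t)|<\delta(t)t|v| .
\]
Dividing by $t$ yields $|\phi(t)-\phi(s)|<\delta(t)|v|=Ct|v|^2$, for every $s$ with $0<|s|<t$, of either sign. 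The negative-$t$ side is handled the same way, by applying Lemma \ref{lem:cur1} to the reversed strainer curve $t\mapsto c(-t)$, i.e.\ with $v$ replaced by $-v$; its hypotheses only involve $|v|$. This gives the same estimate for all $0<|s|<|t|$.

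The resulting bound $|\phi(t)-\phi(s)|\le C\max(|s|,|t|)|v|^2$ is a Cauchy condition as $t\to0$, so the limit $|\dot c(0)|=\lim_{t\to0}\phi(t)$ exists. Letting $s\to0$ in the estimate with $t$ fixed gives exactly $\bigl||\dot c(0)|-\phi(t)\bigr|\le C|t||v|\cdot|v|=\delta(t)|v|$. This holds for positive and negative $t$, and we may enlarge $C$ to obtain a strict inequality.

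For continuity in $(x,v)$, the approach is to note that the constant $C$ is uniform on the chart domain: it depends only on the semiconcavity constant on $U$, the strainer number, and the strainer length, all of which are fixed for $f$ on a slightly smaller neighbourhood. Hence $|\dot c(0)|$ is a uniform limit, on compact sets of $(x,v)$ with $|v|$ bounded, of the functions $(x,v)\mapsto|x,f^{-1}(f(x)+tv)|/|t|$. For fixed $t$ these are continuous, since $f^{-1}$ is continuous by Proposition \ref{prop:bilip}. A uniform limit of continuous functions is continuous.

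One caveat concerns the case $v=0$, which is trivial ($c$ is constant). For large $|v|$, one must restrict $t$ so that $f(x)+tv$ stays in $f(U)$; this is a routine matter.

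The main obstacle I expect is the uniformity of $C$ near the boundary of $U$, which is needed for the continuity claim. My plan is to shrink $U$ so that the strainer length stays bounded below, and so that the bi-Lipschitz constant of Proposition \ref{prop:bilip} is valid throughout.
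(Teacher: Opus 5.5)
Your proposal is correct and follows the paper's proof. The paper also combines the identity $|x,c_s(t)|/t=|x,c(s)|/|s|$ with Lemma \ref{lem:cur1} and the triangle inequality to get the Cauchy estimate, then deduces continuity from the uniform convergence of the continuous difference quotients. Treating negative $t$ by replacing $v$ with $-v$, enlarging $C$ to keep the inequality strict, and shrinking $U$ to make $C$ uniform only spell out details the paper leaves implicit.
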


\begin{proof}
Let $0<|s|<t\ll1$.
By using the same notation as in Lemma \ref{lem:cur1},
\[\frac{|x,c(s)|}{|s|}=\frac{|x,c_s(t)|}{t}.\]
Therefore, by Lemma \ref{lem:cur1},
\begin{align*}
\left|\frac{|x,c(s)|}{|s|}-\frac{|x,c(t)|}{t}\right|&=\left|\frac{|x,c_s(t)|}{t}-\frac{|x,c(t)|}{t}\right|\\
&\le\frac{|c(t),c_s(t)|}{t}<\delta(t)|v|,
\end{align*}
which shows the claim.
Finally, since a uniform limit of continuous functions is continuous, the value $|\dot c(0)|$ depends continuously on $x$ and $v$.
\end{proof}

\begin{rem}\label{rem:cur2}
By definition, we have
\[\epsilon_n|v|\le|\dot c(0)|\le\epsilon_n^{-1}|v|,\]
where $\epsilon_n$ is the bi-Lipschitz constant of $f$ from Proposition \ref{prop:bilip}.
\end{rem}

We next show that every strainer curve has a well-defined tangent direction in some suitable sense.

\begin{lem}\label{lem:cur3}
Let $c(t)=f^{-1}(f(x)+tv)$ be a strainer curve as in Definition \ref{dfn:cur}.
For any sequence $t_i\to 0$ of positive numbers, suppose that the unique extension of the shortest path $xc(t_i)$ (beyond both $x$ and $c(t_i)$) converges to a (unit-speed) shortest path $\gamma$ such that $\gamma(0)=x$.
Then we have
\[|\gamma(|\dot c(0)|t),c(t)|<\delta(t)|t||v|\]
for any small $t$ (possibly negative), where $\delta(t)=C|t||v|$ as in Lemma \ref{lem:cur1}.

In particular, for any directionally differentiable, locally Lipschitz function $g$ around $x$, we have
\[(g\circ c)^+(0)=|\dot c(0)|(g\circ\gamma)^+(0),\quad (g\circ c)^-(0)=|\dot c(0)|(g\circ\gamma)^-(0).\]
Here the directional differentiability means that the right-hand sides of the above equalities are well-defined.
\end{lem}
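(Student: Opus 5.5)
The plan is to pass to the limit $s=t_i\to0$ in Lemma \ref{lem:cur1}, after identifying the points $c_{t_i}(t)$ with points on $\gamma$. Let $\gamma_i$ be the unit-speed extension of the shortest path $xc(t_i)$ with $\gamma_i(0)=x$, oriented so that $c(t_i)=\gamma_i(|x,c(t_i)|)$. Fix a small $t\neq0$ and take $i$ large so that $0<t_i<|t|$. In the notation of Lemma \ref{lem:cur1}, the point $c_{t_i}(t)$ lies on $\gamma_i$. When $t>0$ it is on the side of $c(t_i)$ and
\[c_{t_i}(t)=\gamma_i\bigl((t/t_i)|x,c(t_i)|\bigr).\]
When $t<0$ the sign convention is reversed. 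Since $t_i>0$, I will apply Lemma \ref{lem:cur1} with the roles of positive and negative parameters swapped, i.e.\ to the strainer curve in direction $-v$. Its proof (the $s<0$ case, which uses Lemma \ref{lem:diff}) then gives the point $\gamma_i\bigl((t/t_i)|x,c(t_i)|\bigr)$ with negative parameter, lying beyond $x$. In both cases Lemma \ref{lem:cur1} gives
\[\bigl|c(t),\gamma_i\bigl((t/t_i)|x,c(t_i)|\bigr)\bigr|<C|t||v|\cdot|t||v|.\]

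Next I pass to the limit $i\to\infty$. By Lemma \ref{lem:cur2}, $|x,c(t_i)|/t_i\to|\dot c(0)|$. Since $\gamma_i\to\gamma$ uniformly on compact parameter sets and all curves are $1$-Lipschitz, we get $\gamma_i\bigl((t/t_i)|x,c(t_i)|\bigr)\to\gamma(|\dot c(0)|t)$. The parameters stay bounded by $\epsilon_n^{-1}|t||v|$ by Remark \ref{rem:cur2}, which keeps everything inside the normal ball where unique extension and convergence make sense. The strict inequality may only survive as $\le$ in the limit; I will fix this by enlarging $C$ slightly, e.g.\ doubling it. The main obstacle is this first step: the orientation bookkeeping for negative $t$, and checking that the $s<0$ case of Lemma \ref{lem:cur1} genuinely covers points on the backward extension of $\gamma_i$ through $x$. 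The bound there is uniform in $s$, which is exactly what makes the limit possible.

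For the derivative formulas, let $g$ be $L$-Lipschitz near $x$. For $t>0$,
\[\bigl|g(c(t))-g(\gamma(|\dot c(0)|t))\bigr|\le LC t^2|v|^2=o(t).\]
Hence
\[\frac{g(c(t))-g(x)}{t}=|\dot c(0)|\,\frac{g(\gamma(|\dot c(0)|t))-g(x)}{|\dot c(0)|t}+o(1).\]
The right-hand side converges to $|\dot c(0)|(g\circ\gamma)^+(0)$ by directional differentiability, using $|\dot c(0)|>0$ from Remark \ref{rem:cur2}. This shows that $(g\circ c)^+(0)$ exists and has the claimed value. Repeating the argument with $t<0$ gives the formula for $(g\circ c)^-(0)$ in terms of $(g\circ\gamma)^-(0)$.
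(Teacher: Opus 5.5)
Your proposal is correct and is essentially the paper's proof: identify $c_{t_i}(t)$ with a point on the extended path through $x$ and $c(t_i)$, pass to the limit using Lemmas \ref{lem:cur1} and \ref{lem:cur2}, and then divide the resulting $O(t^2)$ bound by $t$ using the Lipschitz constant of $g$. You also fill in two details the paper leaves implicit: the negative-$t$ case, handled by applying Lemma \ref{lem:cur1} to the strainer curve in direction $-v$ with $s=-t_i$, and the loss of strictness in the limit, handled by enlarging $C$.
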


\begin{proof}
The first claim is an immediate consequence of Lemmas \ref{lem:cur1} and \ref{lem:cur2}.
Indeed, by the definition of $\gamma$ and Lemma \ref{lem:cur2}, $\gamma(|\dot c(0)|t)$ is the limit of $c_{t_i}(t)$, which in turn is $\delta(t)|t||v|$-close to $c(t)$ by Lemma \ref{lem:cur1}.

The second claim follows from the first.
Suppose that $t>0$ and $g$ is $L$-Lipschitz around $x$.
By the first claim, we have
\begin{equation}\label{eq:cur3}
\left|\frac{g(c(t))-g(x)}t-\frac{g(\gamma(|\dot c(0)|t))-g(x)}t\right|\le\frac{L|\gamma(|\dot c(0)|t),c(t)|}{t}<L\delta(t)|v|.
\end{equation}
Taking $t\to0$, we obtain $(g\circ c)^+(0)=|\dot c(0)|(g\circ\gamma)^+(0)$.
The other equality follows in the same way.
\end{proof}

In particular, we obtain the continuous differentiability of a distance function with respect to a strainer curve.

\begin{lem}\label{lem:cur4}
Let $c(t)=f^{-1}(f(x)+tv)$ be a strainer curve as in Definition \ref{dfn:cur}.
For any $p\in\tilde B\setminus\{x\}$, set $g:=d(p,\cdot)$, where $\tilde B$ is another normal ball that contains $x$.
Then the composition $g\circ c$ is differentiable at $x$ and satisfies
\[\left|\frac{g(c(t))-g(x)}{t}-(g\circ c)'(0)\right|<\tilde\delta(t)|v|,\]
for any small $t$ (possibly negative), where $\tilde \delta(t)=\tilde C|t||v|$ as in Lemma \ref{lem:cur1}, which additionally depends on the local semiconcavity constant on $\tilde B$ and the distance from $p$ to $x$.
In particular, $(g\circ c)'(0)$ depends continuously on $x$ and $v$.
\end{lem}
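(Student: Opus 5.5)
The plan is to combine Lemma \ref{lem:cur3} with the uniform convergence of angle, Proposition \ref{prop:unif}, applied to the point $p$ instead of the strainer points. First fix a sequence $t_i\to0^+$ such that the extended shortest paths $xc(t_i)$ converge to a shortest path $\gamma$ with $\gamma(0)=x$; such a subsequence exists by local compactness. Lemma \ref{lem:cur3} then gives
\[|\gamma(|\dot c(0)|t),c(t)|<\delta(t)|t||v|\]
for all small $t$ of either sign. Since $g$ is $1$-Lipschitz, it suffices to prove the statement with $c(t)$ replaced by $\gamma(|\dot c(0)|t)$, at the cost of an error $\delta(t)|v|$ in the difference quotient. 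This error has the required form $C|t||v|^2$.

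Next I would treat the distance function along the shortest path $\gamma$. Lemma \ref{lem:diff} and Lemma \ref{lem:well} show that $g\circ\gamma$ is differentiable at $0$ with derivative $-\cos\angle(\eta,\gamma)$, where $\eta$ is the shortest path $xp$. For $t>0$, Proposition \ref{prop:unif} applied to $p,x,\gamma(\tau)$ with $\tau=|\dot c(0)|t$ gives $|\cos\angle px\gamma(\tau)-\cos\tilde\angle px\gamma(\tau)|<10\tilde\lambda\tau/|px|$. Here $\tilde\lambda$ is the semiconcavity constant on $\tilde B$. We must check that the relevant points lie in $\tilde B$, and shrink $t$ if needed. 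The Euclidean law-of-cosines estimate from the proof of Proposition \ref{prop:unif} gives $|\cos\tilde\angle px\gamma(\tau)-(|px|-|p\gamma(\tau)|)/\tau|\le\tau/(2|px|)$. Combining these yields
\[\left|\frac{g(\gamma(\tau))-g(x)}{\tau}+\cos\angle px\gamma(\tau)\right|\le C'\tau/|px|.\]
For $t<0$, I apply the same estimate to the opposite shortest path $-\gamma$ and use $\angle(\eta,\gamma)+\angle(\eta,-\gamma)=\pi$ from Remark \ref{rem:well}. Multiplying by $|\dot c(0)|\le\epsilon_n^{-1}|v|$ (Remark \ref{rem:cur2}) and setting $(g\circ c)'(0):=-|\dot c(0)|\cos\angle(\eta,\gamma)$ gives the quantitative inequality with $\tilde\delta(t)=\tilde C|t||v|$. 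In particular, the limits from both sides agree, so $g\circ c$ is differentiable at $0$. This also shows that the value does not depend on the chosen limit $\gamma$, since the difference quotients determine it.

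For continuity in $(x,v)$, I would argue as in Lemma \ref{lem:cur2}. For a fixed small $t$, the difference quotient $(g(c(t))-g(x))/t$ is continuous in $(x,v)$, because $f^{-1}$ is continuous and $c(t)=f^{-1}(f(x)+tv)$. The estimate above is uniform for $(x,v)$ in a neighborhood, since the constants depend only on the semiconcavity constants, the strainer data, and a lower bound for $|px|$. Hence $(g\circ c)'(0)$ is a uniform limit of continuous functions, and is therefore continuous.

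The main obstacle is the uniformity bookkeeping. We must make sure that the range of $t$ and the constant $\tilde C$ are controlled only by the stated quantities, so that all the points $\gamma(\tau)$, $c(t)$, and $p$ lie in a common normal ball where Proposition \ref{prop:unif} applies. The key point is that Proposition \ref{prop:unif} gives a bound independent of the particular path $\gamma$, which is what makes this possible.
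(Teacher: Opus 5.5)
Your proposal is correct and follows essentially the same route as the paper. You pass from $c$ to a tangential shortest path $\gamma$ via Lemma~\ref{lem:cur3} and the $1$-Lipschitz property of $g$, bound the difference quotients of $g\circ\gamma$ using the explicit form of Proposition~\ref{prop:unif}, rescale with Remark~\ref{rem:cur2}, and get continuity from the local uniformity of the estimate, which depends only on a lower bound for $|px|$; your law-of-cosines step and your treatment of $t<0$ via $\angle(\eta,\gamma)+\angle(\eta,-\gamma)=\pi$ simply spell out what the paper compresses into \eqref{eq:cur4}.
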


\begin{proof}
Let $\gamma(t)$ be a (unit-speed) tangential shortest path for $c$ as in Lemma \ref{lem:cur3}.
Since $g$ is a distance function, by the uniform convergence of angle in Proposition \ref{prop:unif} (the explicit version), we have
\begin{equation}\label{eq:cur4}
\left|\frac{g(\gamma(t))-g(x)}{t}-(g\circ\gamma)'(0)\right|<100\tilde\lambda\frac{t}{|px|},
\end{equation}
where $\tilde\lambda\ge 1$ is the local semiconcavity constant on $\tilde B$.
Combining \eqref{eq:cur4} with \eqref{eq:cur3} and Remark \ref{rem:cur2} shows the claim.
The right-hand side of \eqref{eq:cur4} explains the additional dependency of $\tilde\delta$ on $\tilde\lambda$ and $|px|$.
More precisely, $\tilde\delta$ depends on a lower bound for $|px|$, and thus it can be chosen uniformly for every $x'$ close to $x$.
In particular, the convergence $\lim_{t\to0}(g(c(t))-g(x))/t=(g\circ c)'(0)$ is locally uniform, and hence the limit depends continuously on $x$ and $v$.
\end{proof}

\begin{rem}
Unfortunately, it is still unclear whether the tangential shortest path $\gamma$ in Lemma \ref{lem:cur3} is unique (in view of the first claim of Lemma \ref{lem:cur3}, the uniqueness holds, for example, if $X$ further satisfies the Busemann concavity as in \cite{HY25}).
However, Lemma \ref{lem:cur4} suggests that the differential ``$\dot c(0)=df^{-1}(v)$'' is well-defined as a derivation at $x$ that acts on the class of distance functions.
\end{rem}

\subsection{Differentiable and DC structures}\label{sec:dc}

We now prove that the strainer charts of a locally semiconcave G-space define canonical $C^{1,1/2}$ and DC structures.

We first discuss the $C^{1,1/2}$ structure.
Recall that an atlas on a topological manifold is called \emph{$C^{1,1/2}$} if the coordinate transformations are $C^{1,1/2}$ diffeomorphisms.
The following proposition is the first main result of this subsection.

\begin{prop}\label{prop:c1}
Let $X$ be a locally semiconcave G-space.
Then any coordinate transformation of strainer charts is a $C^{1,1/2}$ diffeomorphism.
More precisely, for any $p\in X$, the distance function $d(p,\cdot)$ is $C^{1,1/2}$ smooth (except at $p$) with respect to any strainer chart that is contained in a normal ball around $p$.
\end{prop}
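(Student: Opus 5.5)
The plan is to show that for a strainer chart $f:U\to\mathbb R^n$ contained in a normal ball around $p$, the function $h:=g\circ f^{-1}$ with $g=d(p,\cdot)$ is $C^{1,1/2}$ on $f(U)\setminus\{f(p)\}$. The coordinate transformation statement then follows formally. Indeed, if $f'=(d(p'_1,\cdot),\dots,d(p'_n,\cdot))$ is another strainer chart, each component of $f'\circ f^{-1}$ is of the form $d(p'_i,\cdot)\circ f^{-1}$. After shrinking so that everything lies in a common normal ball with $p'_i$ at distance bounded below, each component is $C^{1,1/2}$. The same holds for the inverse by symmetry, and bi-Lipschitzness together with the inverse function theorem gives $C^{1,1/2}$ diffeomorphisms.

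\textbf{Existence of the differential.} Lemma \ref{lem:cur4} gives, for every $y=f^{-1}(u)$ and $v\in\mathbb R^n$, that the directional derivative $D_vh(u)=(g\circ c)'(0)$ exists, with remainder
\[
|h(u+tv)-h(u)-tD_vh(u)|\le \tilde C t^2|v|^2,
\]
uniformly for $u$ in a compact set. Two things must be checked next.
\begin{enumerate}
\item \emph{Linearity of $v\mapsto D_vh(u)$.} By Lemma \ref{lem:cur3}, $D_vh(u)=|\dot c(0)|\,(g\circ\gamma)'(0)=-|\dot c(0)|\cos\angle p y \gamma$, but linearity is not directly visible from this formula. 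Instead, I would use the quadratic remainder. Write $h(u+v+w)-h(u)$ as the sum of the increment along $v$ from $u$ and the increment along $w$ from $u+v$. Apply the estimate at both base points and use continuity of $D_wh$ in the base point (Lemma \ref{lem:cur4}). Replacing $v,w$ by $tv,tw$ and letting $t\to0$ then gives additivity, and homogeneity is immediate.
\item \emph{Consequence.} With linearity, $h$ is differentiable, and its derivative $Dh(u)$ is continuous, so $h$ is $C^1$.
\end{enumerate}

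\textbf{H\"older estimate.} This is the main step. For $u,u'$ with $|u-u'|=r$, set $w=u'-u$ and fix a unit vector $e$. Using the remainder bound three times,
\[
D_eh(u')-D_eh(u)=\frac{h(u'+te)-h(u')-h(u+te)+h(u)}{t}+O(t).
\]
The first term is a second difference. I would bound it by expanding along $w$ from $u$ and from $u+te$, giving $D_wh(u+te)-D_wh(u)$ plus $O(r^2)$. This still involves a derivative difference, so the bound is circular.

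The cleaner route is a direct second-difference bound: $|h(u+a)+h(u-a)-2h(u)|\le C|a|^2$, which comes from Lemma \ref{lem:cur4} with $t=\pm1$ and the linearity above. A $C^1$ function with second differences bounded by $C|a|^2$ has $Dh$ Lipschitz, which is even better than $1/2$-H\"older. I expect the honest outcome is weaker, because the constant $\tilde C$ in Lemma \ref{lem:cur4} depends on $|v|$ and on the chart. The chart is built from distance functions whose own derivative continuity is what is in question.

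I therefore expect the main obstacle to be controlling how the strainer curves $c$ based at $u$ and at $u'$ compare. Lemma \ref{lem:cur1} only gives closeness $Ct^2$ to a shortest path from the \emph{same} base point. I would balance the two error sources, $O(t)$ from the remainder and $O(r/t)$ from comparing increments over a step $t$ at base points $r$ apart, using the $1$-Lipschitz property of $g$ together with semiconcavity. Choosing $t=\sqrt r$ yields $|D_eh(u)-D_eh(u')|\le C\sqrt r$, which is exactly the $1/2$-H\"older exponent.
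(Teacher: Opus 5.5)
Your final argument is the paper's proof. You compare difference quotients with a common step $t$ at base points $r$ apart, bound their discrepancy by $O(r/t)$ using only the Lipschitz property of $g\circ f^{-1}$ (the bi-Lipschitz property of $f$ suffices, with no finer control of strainer curves and no semiconcavity needed), bound the remainder by $O(t)$ via the locally uniform estimate of Lemma \ref{lem:cur4}, and take $t=\sqrt r$. Incidentally, the second-difference route you abandoned is sound: the remainder in Lemma \ref{lem:cur4} is $\tilde C t^2|v|^2$ with $\tilde C$ independent of $v$ and locally uniform in the base point (the paper relies on this same uniformity), so, granted that lemma, it would even give a locally Lipschitz differential.
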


The proof is an easy application of Lemma \ref{lem:cur4} and the explicit error function there.
In fact, the explicit form will be used only in the final line of the proof.

\begin{proof}
Fix $p\in X$ and set $g:=d(p,\cdot)$.
Let $f:U\to\mathbb R^n$ be a strainer chart on $X$ that is contained in a normal ball around $p$.
We show that $g\circ f^{-1}$ is a $C^{1,1/2}$ function on $f(U)\setminus\{f(p)\}$.
Note that Lemma \ref{lem:cur4} already showed that it is a $C^1$ function.
More precisely, we observed that the differential of $g\circ f^{-1}$ is a uniform limit of differential quotients, which are continuous functions.
In what follows, we improve it to the local $1/2$-H\"older continuity, by examining the previous proof in detail and making use of the explicit error function.

Let $c_x(t)=f^{-1}(f(x)+tv)$ and $c_y(t)=f^{-1}(f(y)+tv)$ be strainer curves passing through $x,y\in U\setminus\{p\}$ in the same direction $v\in\mathbb R^n$, respectively.
By Lemma \ref{lem:cur4}, we have
\begin{equation}\label{eq:c1-1}
\left|\frac{g(c_x(t))-g(x)}{t}-(g\circ c_x)'(0)\right|<\tilde\delta(t)|v|,\quad\left|\frac{g(c_y(t))-g(y)}{t}-(g\circ c_y)'(0))\right|<\tilde\delta(t)|v|.
\end{equation}
Note that the error function $\tilde\delta(t)$ can be chosen uniformly for any close $x$ and $y$, for the reason described in the proof of Lemma \ref{lem:cur4}.
By the $1$-Lipschitz property of $g$ and the $\epsilon_n$-bi-Lipschitz property of $f$ (Proposition \ref{prop:bilip}), we have
\begin{equation}\label{eq:c1-2}
\begin{aligned}
\left|\frac{g(c_x(t))-g(x)}{t}-\frac{g(c_y(t))-g(y)}{t}\right|&\le\frac1{|t|}\left(|g(c_x(t))-g(c_y(t))|+|g(x)-g(y)|\right)\\
&\le\frac1{|t|}(|(c_x(t),c_y(t)|+|xy|)\\
&\le\frac{2\epsilon_n^{-2}}{|t|}|xy|.
\end{aligned}
\end{equation}
Combining \eqref{eq:c1-1} and \eqref{eq:c1-2}, we obtain
\[|(g\circ c_x)'(0)-(g\circ c_y)'(0)|<\frac{2\epsilon_n^{-2}}{|t|}|xy|+2\tilde\delta(t)|v|.\]
Since $\tilde\delta(t)=\tilde C|t||v|$ by Lemma \ref{lem:cur4}, letting $t:=|xy|^{1/2}$ shows the claim.
\end{proof}

Next we discuss the DC structure.
The DC structure was first introduced by Perelman \cite{Per94} in the setting of Alexandrov spaces and later used by Lytchak--Nagano \cite{LN19} in the study of GCBA spaces.
See also a general theory by Ambrosio--Bertrand \cite{AB18}.
Although it is possible to define the DC structure in the same way as the $C^{1,1/2}$ structure (i.e., the coordinate transformations are DC isomorphisms between Euclidean domains), here we follow a slightly more general treatment in \cite[Section 14]{LN19}.

Let $X$ be a geodesic space and $U$ an open subset.
A locally Lipschitz function $f:U\to\mathbb R$ is called \emph{semiconcave} if for any $x\in U$ there exists $\lambda>0$ such that for any (unit-speed) shortest path $\gamma(t)$ in $B(x,\lambda^{-1})$, the function
\[f\circ\gamma(t)-\lambda t^2\]
is concave. 
We say that $f$ is \emph{DC} if it is locally represented as a difference of two semiconcave functions.

\begin{dfn}
Let $X$ and $Y$ be geodesic spaces.
A locally Lipschitz map
\[f:X\to Y\]
is a \emph{DC map} if for any DC function $g:V\to \mathbb R$, where $V\subset Y$ is open, the composition $g\circ f$ is a DC function on $f^{-1}(V)$.
We also say that $f$ is a \emph{DC isomorphism} if it is a locally bi-Lipschitz homeomorphism and $f$ and $f^{-1}$ are DC maps.
\end{dfn}

A composition of DC maps is DC.
In case $Y=\mathbb R^n$, the above definition recovers the component-wise definition, i.e., $f$ is DC if and only if each component of $f$ is a DC function in the previous sense.
See \cite[Section 14]{LN19} for more details.

Now we are ready to state the other main result of this subsection.

\begin{prop}\label{prop:dc}
Let $X$ be a locally semiconcave G-space.
Then any strainer coordinate map on $X$ is a DC isomorphism.
In particular, any coordinate transformation of strainer charts is a DC isomorphism.
\end{prop}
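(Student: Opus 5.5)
We must show that a strainer chart $f:U\to\mathbb R^n$ (with $f=(d(p_1,\cdot),\dots,d(p_n,\cdot))$) is a DC isomorphism. By Proposition \ref{prop:bilip}, $f$ is already a locally bi-Lipschitz homeomorphism onto an open subset of $\mathbb R^n$. It therefore remains to show two things: that $f$ is a DC map, and that $f^{-1}$ is a DC map.

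\emph{The map $f$ is DC.} Each component $d(p_i,\cdot)$ is semiconcave on $U$ by Definition \ref{dfn:conc} and Remark \ref{rem:sq}, since $U$ stays away from $p_i$. Hence each component is DC, and so $f$ is DC by the component-wise characterization of DC maps into $\mathbb R^n$ (see \cite[Section 14]{LN19}).

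\emph{The map $f^{-1}$ is DC.} By the definition of DC maps and the fact that compositions of DC maps are DC, it suffices to prove the following: for every DC function $g$ on an open subset $V\subset U$, the function $g\circ f^{-1}$ is DC on $f(V)\subset\mathbb R^n$. Since DC functions are locally differences of semiconcave functions, we reduce to the case where $g$ is semiconcave on $V$. We then want to show that $g\circ f^{-1}$ is DC in the Euclidean sense.

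The plan is to show that $g\circ f^{-1}$ becomes semiconcave after adding a convex quadratic $A|y|^2$. A cleaner variant, following Perelman and Lytchak--Nagano, uses the functions $h_i:=d(p_i,\cdot)^2$ and their coordinate expressions $\phi_i(y)=y_i^2$, which are smooth and convex on $\mathbb R^n$. Concretely, I would show that for a suitable $A$ (depending on local constants), the function $G(y):=g(f^{-1}(y))-A|y|^2$ is semiconcave along Euclidean segments. Since semiconcavity on a convex Euclidean domain is local and one-dimensional, it suffices to check second differences along strainer curves $c(t)=f^{-1}(f(x)+tv)$:
\[g(c(t))+g(c(-t))-2g(x)\le Ct^2|v|^2.\]

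The key geometric input comes from Lemma \ref{lem:cur1} and Lemma \ref{lem:cur3}. The points $c(t)$ and $c(-t)$ lie within $Ct^2|v|^2$ of the points $\gamma(\pm|\dot c(0)|t)$ on a single shortest path $\gamma$ through $x$. For the shortest path $\gamma$ itself, the semiconcavity of $g$ gives the second-difference bound $\le \lambda|\dot c(0)|^2t^2$. The Lipschitz property of $g$ converts the displacement error into an error of order $Lt^2|v|^2$. Hence the required inequality holds with constants that are uniform in $x$ near a given point and in $|v|\le 1$. For this uniformity, Lemma \ref{lem:cur1} is essential, since its constant is uniform. The continuity of $g\circ f^{-1}$ then upgrades this midpoint-type bound to genuine semiconcavity of $y\mapsto g\circ f^{-1}(y)-C'|y|^2$. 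Consequently $g\circ f^{-1}$ is itself semiconcave, hence DC.

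\emph{Main obstacle.} The main obstacle is the uniform second-order estimate along strainer curves. One needs to check that the error $|c(\pm t),\gamma(\pm|\dot c(0)|t)|=O(t^2)$ holds with the same tangential shortest path $\gamma$ on both sides and with constants uniform in the base point. Lemma \ref{lem:cur3} supplies this via the convergence of extended shortest paths, together with local differentiability (Lemma \ref{lem:diff}) for the negative side. If a subtlety arises because $\gamma$ may be non-unique, I would fix any limit $\gamma$ produced as in Lemma \ref{lem:cur3}; the estimate there holds for every such $\gamma$.
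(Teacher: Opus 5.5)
Your proof is correct, but it takes a different route from the paper's.

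\textbf{The paper's route.} The paper follows Perelman and Lytchak--Nagano. Lemma~\ref{lem:dc1} builds a special function $\frac1n\sum_i d(q_i,\cdot)$ from an opposite strainer. Lemma~\ref{lem:dc2} then shows, by an openness-and-minimization argument, that special semiconcave functions are semiconcave in coordinates. Finally, the splitting $h\circ f^{-1}=(h+Lg)\circ f^{-1}-Lg\circ f^{-1}$ exhibits $h\circ f^{-1}$ as DC. From Lemma~\ref{lem:cur3} that argument uses only the first-order identity $(g\circ c)^\pm(0)=|\dot c(0)|(g\circ\gamma)^\pm(0)$. Beyond that it needs only the semiconcavity of the $f_i$, the openness of $f$, and the special property.

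\textbf{Your route.} You use the quantitative first claim of Lemma~\ref{lem:cur3}: $c(\pm t)$ lies within $Ct^2|v|^2$ of $\gamma(\pm|\dot c(0)|t)$, with $C$ locally uniform and the same $\gamma$ on both sides. This transfers second differences from the shortest path $\gamma$ directly to the strainer curve. The argument is shorter and avoids special functions. It also proves more: every semiconcave function on $X$ is itself semiconcave in strainer coordinates. In fact it shows the special-function step inside the proof of Lemma~\ref{lem:dc2} is dispensable in the semiconcave case, since one can simply take $y=c(t)$ in \eqref{eq:dc2-5}.

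\textbf{What the paper's route buys.} It is robust. Your estimate depends on the explicit error $\delta(t)=Ct|v|$, which comes from the explicit bound in Proposition~\ref{prop:unif}. In the setting of Theorem~\ref{thm:conv}, Lemma~\ref{lem:cur3} holds only with an implicit $\delta(t)\to0$. There your second-difference bound degrades to $o(|t|)$ instead of $O(t^2)$, and the argument fails. The Perelman-type proof, by contrast, transfers verbatim with the signs reversed.

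\textbf{Minor cleanup.} Drop the unused remark about $d(p_i,\cdot)^2$ and the phrase about ``adding a convex quadratic.'' What you actually show is that $y\mapsto g\circ f^{-1}(y)-\frac{C}{2}|y|^2$ has nonpositive symmetric second differences for $|t|$ below a locally uniform threshold. By continuity, that function is therefore concave on convex subsets of $f(V)$.
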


The proof of Proposition \ref{prop:dc} is essentially the same as in \cite[Section 3]{Per94}/\cite[Section 14.6]{LN19} for Alexandrov/GCBA spaces.
However, since several minor modifications are required, we include a complete argument.

Let $X$ be a locally semiconcave G-space and $f:U\to\mathbb R^n$ a strainer chart on $X$.
Modifying the definition in \cite[Section 14.6]{LN19}, we introduce the following notion: for given $\epsilon,\delta>0$, we say that a semiconcave function $g$ on $U$ is \emph{$(\epsilon,\delta)$-special} for $f$ if for any shortest path $\gamma$ with $\gamma(0)\in U$, we have
\begin{equation}\label{eq:sp}
(f_i\circ\gamma)^+(0)\le\delta\text{ for all }1\le i\le n\implies (g\circ\gamma)^+(0)\ge\epsilon.
\end{equation}

The proof of Proposition \ref{prop:dc} is based on the following two lemmas, essentially the same as \cite[Lemmas 14.5, 14.6]{LN19}.

\begin{lem}\label{lem:dc1}
Let $X$ be a locally semiconcave G-space and $f:U\to\mathbb R^n$ a strainer chart on $X$.
Then, for any $x\in U$, there exists a $1$-Lipschitz, $(\epsilon_n,\delta_n)$-special semiconcave function $g$ for $f$ defined on a neighborhood of $x$, where $\epsilon_n\gg\delta_n$ are positive constants depending only on $n$.
\end{lem}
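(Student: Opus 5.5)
We follow the construction of \cite[Lemma 14.5]{LN19} (going back to Perelman \cite{Per94}). Let $f=(d(p_1,\cdot),\dots,d(p_n,\cdot))$ be the strainer chart. By Remark \ref{rem:strwell} we may assume the strainer points $p_i$ lie at a fixed distance $\rho$ from $x$, with $\rho$ small compared to the normal radius but large compared to the size of the neighborhood we finally work on. For each $i$, extend the shortest path $p_ix$ beyond $x$ to a point $q_i$ with $|xq_i|=\rho$. Then set
\[g:=\frac1n\sum_{i=1}^n d(q_i,\cdot).\]
Each $d(q_i,\cdot)$ is $1$-Lipschitz, and it is semiconcave away from $q_i$ by Remark \ref{rem:sq}. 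Hence $g$ is a $1$-Lipschitz semiconcave function on a small neighborhood of $x$.

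To verify the special property \eqref{eq:sp}, take a shortest path $\gamma$ with $\gamma(0)=y$ near $x$. By the first variation formula \eqref{eq:fv}, the hypothesis $(f_i\circ\gamma)^+(0)\le\delta$ says $\cos\angle p_iy\gamma\ge-\delta$. The extension of $p_iy$ beyond $y$ passes near $q_i$, since $p_i,x,q_i$ lie on one geodesic and angles are continuous (Lemma \ref{lem:conti}). Using Lemma \ref{lem:well}, Remark \ref{rem:diff1} and continuity, we get $\angle(q_i\text{ at }y,\gamma)+\angle(p_i\text{ at }y,\gamma)\approx\pi$. Therefore $(d(q_i,\cdot)\circ\gamma)^+(0)=-\cos\angle q_iy\gamma\ge-\delta-o(1)$ for each $i$. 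The resulting lower bound on $(g\circ\gamma)^+(0)$ is about $-\delta$, which is useless on its own. We need one index $i$ with a large positive contribution.

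The key step is to rule out the case where all $(f_i\circ\gamma)^+(0)$ and all $(d(q_i,\cdot)\circ\gamma)^+(0)$ are nearly $0$. In that case $\angle p_iy\gamma\approx\pi/2$ for all $i$. Adding a point of $\gamma$ to the strainer would then give an $(n+1,\delta')$-strainer at $y$, which contradicts the strainer number $n$ (Lemma \ref{lem:strnum}). To make this quantitative, argue by contradiction and compactness exactly as in the proof of Proposition \ref{prop:bilip}. Suppose there are sequences $y_j\to x$, shortest paths $\gamma_j$ extended to a fixed length, and $\delta_j\to0$ such that the implication fails with $\epsilon_j\to 0$. Pass to a limit shortest path $\gamma$ at $x$. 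Continuity of angles (Lemma \ref{lem:conti}) and the independence from the choice of points (Lemma \ref{lem:well}) give $\angle p_ix\gamma=\pi/2$ for all $i$. Since $\angle p_ixp_j\approx\pi/2$ as well, we obtain an $(n+1,\delta_n)$-strainer at $x$.

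The main obstacle is that this contradiction must be uniform, with constants depending only on $n$. A $(k+1,\delta_n)$-strainer with $\delta_n>0$ does not immediately contradict strainer number $n$, since Lemma \ref{lem:strnum} concerns $(n,0)$-strainers. We would handle this via Proposition \ref{prop:bilip}. An $(n+1,\delta_n)$-strainer map is $\epsilon_{n+1}$-open into $\mathbb R^{n+1}$ by Proposition \ref{prop:open}. This is impossible on an $n$-dimensional Lipschitz manifold (Corollary \ref{cor:str}), because a Lipschitz open map cannot raise dimension. This gives a quantitative gap $\epsilon_n$: some $i$ has $|\cos\angle p_iy\gamma|\ge\epsilon$. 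Under the hypothesis $(f_i\circ\gamma)^+(0)\le\delta$, this index must have $(f_i\circ\gamma)^+(0)\le-\epsilon$, which yields $(d(q_i,\cdot)\circ\gamma)^+(0)\ge\epsilon-o(1)$. Hence $(g\circ\gamma)^+(0)\ge\epsilon/n-\delta$. Choosing $\delta_n\ll\epsilon_n/n$ and shrinking the neighborhood gives the claim, with $\epsilon_n$ replaced by $\epsilon_n/(2n)$.
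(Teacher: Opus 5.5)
The construction matches the paper's: $q_i$ on the extension of $p_ix$ beyond $x$, $g=\frac1n\sum_i d(q_i,\cdot)$, and the estimate $(d(q_i,\cdot)\circ\gamma)^+(0)\ge-\delta-o(1)$ on a small neighborhood. One small point: the normalization $|p_ix|=\rho$ actually changes $f$, so it is better to leave the $p_i$ where they are and only choose the $q_i$.

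\textbf{The gap is in the nondegeneracy step.} You produce an $(n+1)$-strainer whose first $n$ points satisfy \eqref{eq:str} only with the chart's error $\delta_n$. You then invoke Proposition \ref{prop:open} with $k=n+1$. That proposition requires the error to be at most $\delta_{n+1}$, and nothing gives $\delta_n\le\delta_{n+1}$; the natural choice of these constants goes the other way. So, with the results at hand, an $(n+1,\delta_n)$-strained point is not ruled out by your dimension count. Separately, the compactness-and-contradiction formulation only produces some $\epsilon>0$ depending on $X$, $f$ and $x$, not on $n$ alone. So it does not give the constants claimed in the statement.

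\textbf{The missing observation is that this nondegeneracy is already built into the chart.} By Definition \ref{dfn:strch} (via Proposition \ref{prop:bilip}), $f$ is $\epsilon_n$-bi-Lipschitz on $U$. Hence for any shortest path $\gamma$ starting at $y\in U$ we have $|f(\gamma(t))-f(y)|\ge\epsilon_n t$. This gives $\sum_i|(f_i\circ\gamma)^+(0)|\ge\epsilon_n$, so some $i_0$ has $|(f_{i_0}\circ\gamma)^+(0)|\ge\epsilon_n/n$.

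This is essentially how the paper argues. It applies Proposition \ref{prop:bilip} to the opposite strainer map $(d(q_1,\cdot),\dots,d(q_n,\cdot))$, which amounts to the same thing because $(d(q_i,\cdot)\circ\gamma)^+(0)\approx-(f_i\circ\gamma)^+(0)$.

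With this bound in place, your last two sentences finish the proof with $\delta_n\ll\epsilon_n/n$. No $(n+1)$-strainer or limiting argument is needed; in effect you were re-deriving the injectivity part of Proposition \ref{prop:bilip}, which the definition of a strainer chart already supplies.
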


\begin{lem}\label{lem:dc2}
Let $X$ be a locally semiconcave G-space and $f:U\to\mathbb R^n$ a strainer chart on $X$.
Let $g$ be an $(\epsilon,\delta)$-special semiconcave function for $f$ defined on $U$, where $\epsilon$ and $\delta$ are positive numbers (possibly depending on $g$).
Then the composition $g\circ f^{-1}$ is a semiconcave function on $f(U)$.
\end{lem}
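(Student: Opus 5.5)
The plan is to show that $h:=g\circ f^{-1}$ satisfies a uniform midpoint inequality on $f(U)$: for every $y\in f(U)$, every $v\in\mathbb R^n$ and every small $t$,
\[
h(y)\ge\tfrac12\bigl(h(y+tv)+h(y-tv)\bigr)-C t^2|v|^2,
\]
with $C$ locally uniform. For a locally Lipschitz function on a Euclidean domain, such a bound on second differences is equivalent to semiconcavity. The first step is to shrink $U$ and work in a small ball $V$ around a fixed point of $f(U)$, so that the following hold uniformly: the semiconcavity constants of $g$ and of the strainer coordinates $f_i=d(p_i,\cdot)$ (by Remark \ref{rem:sq}), the bi-Lipschitz constant $\epsilon_n$ of $f$ from Proposition \ref{prop:bilip}, and the Lipschitz constant $L$ of $g$. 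Note that $h$ is then Lipschitz.

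\emph{Step 1 (midpoint comparison).} Put $x_\pm:=f^{-1}(y\pm tv)$. Then $|x_+x_-|\le 2\epsilon_n^{-1}t|v|$. Let $m$ be the midpoint of the shortest path $x_-x_+$. Semiconcavity of $g$ along this path gives
\[
g(m)\ge\tfrac12\bigl(h(y+tv)+h(y-tv)\bigr)-\lambda t^2|v|^2 .
\]
Semiconcavity of each $f_i$ along the same path gives $f_i(m)\ge y_i-\lambda t^2|v|^2$, i.e.\ $f(m)\ge w:=y-\lambda t^2|v|^2\mathbf 1$ componentwise. (Here $\lambda$ absorbs the factor $\epsilon_n^{-2}$.) So it suffices to show $g(m)\le h(w)$. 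Indeed, $h$ is Lipschitz, so $h(w)\le h(y)+C t^2|v|^2$, which yields the midpoint inequality.

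\emph{Step 2 (the key monotonicity).} For $w$ in $V$, set $H(w):=\max\{g(z): z\in \bar W,\ f(z)\ge w\}$, where $\bar W$ is a fixed compact neighborhood. I want to show that the maximum is attained exactly on the fiber, so that $H(w)=h(w)$; this immediately gives $g(m)\le H(w)=h(w)$. Suppose a maximizer $z$ has $f_j(z)>w_j$ for some $j$. Consider the strainer curve $c(s)=f^{-1}(f(z)+su)$ with
\[
u=-e_j+\tfrac{\delta}{2}\sum_{i\ne j}e_i .
\]
By Lemma \ref{lem:cur3}, some tangential shortest path $\gamma$ at $z$ satisfies $(f_i\circ\gamma)^+(0)=u_i/|\dot c(0)|$, and Lemma \ref{lem:cur4} makes this a genuine derivative. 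Hence $(f_i\circ\gamma)^+(0)\le\delta$ for all $i$, so specialness \eqref{eq:sp} gives $(g\circ\gamma)^+(0)\ge\epsilon>0$. On the other hand, for all $i\ne j$ the coordinates $f_i\circ\gamma$ are nondecreasing to first order (and in fact one should simply follow $c$ itself, along which $f$ moves exactly along $u$). Therefore $c(s)$ stays in $\{f\ge w\}$ for small $s>0$, and using Lemma \ref{lem:cur3} again, $g(c(s))>g(z)$. This contradicts maximality, so $f(z)=w$. Since $f$ is injective on $U$, the maximizer is $f^{-1}(w)$, and $H=h$.

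\emph{Main obstacle.} The delicate point is Step 2. I need to ensure that the maximizer lies in the interior of $\bar W$, which requires choosing $\bar W$ large compared to $V$ and using the $\epsilon_n$-openness and bi-Lipschitz property to control where $\{f\ge w\}$ sits. I also need to transfer the specialness condition, which is stated for shortest paths, to the strainer curve $c$. For the latter I will rely on Lemma \ref{lem:cur3}: the one-sided derivatives of $g$ and $f_i$ along $c$ are $|\dot c(0)|$ times those along the tangential shortest path $\gamma$. This transfers both the hypothesis $(f_i\circ\gamma)^+(0)\le\delta$ and the conclusion $(g\circ\gamma)^+(0)\ge\epsilon$. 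Every other step is a routine use of semiconcavity and Lipschitz bounds.
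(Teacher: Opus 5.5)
Your strategy is sound and differs from the paper's. The paper checks semiconcavity of $g\circ f^{-1}$ along each line via the pair \eqref{eq:dc2-1}--\eqref{eq:dc2-2}. It transfers semiconcavity of $g$ and $f_i$ from a tangential shortest path $\gamma$ to the strainer curve $c$ using Lemma~\ref{lem:cur3}. It then uses the $\epsilon_n$-openness of $f$ to find $y$ near $\gamma(|\dot c(0)|t)$ with $f(y)\le f(c(t))$. Finally it proves the monotonicity $(g\circ c)(t)\le g(y)$ by taking the point of $f(A)$ closest to $f(y)$ and moving straight toward $f(y)$.

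You instead apply semiconcavity of $g$ and of the $f_i$ along the genuine chord $x_-x_+$ and reduce to a uniform second-difference inequality. This removes the tangential path from the second-order estimate, the openness of $f$, and the separate inequality \eqref{eq:dc2-1}. What remains is the same monotonicity principle (``$f(z)\ge w$ componentwise implies $g(z)\le h(w)$''), which you prove by maximizing $g$ rather than by a closest-point argument. Step~1 and the passage from the uniform midpoint inequality to semiconcavity are fine.

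The real problem is the obstacle you flag in Step~2: the remedy you suggest, enlarging $\bar W$, does not resolve it. For any compact $\bar W\subset U$, the constraint set corresponds in coordinates to $f(\bar W)\cap(w+\mathbb R^n_{\ge0})$. Because the orthant is unbounded, this set always reaches $\partial f(\bar W)$: push any of its points in the direction $(1,\dots,1)$. So enlarging $\bar W$ never keeps the maximizer off $\partial\bar W$. There $c(s)$ may leave $\bar W$ at once, and the perturbation argument breaks down.

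The fix is to cap the set from above, which is exactly what the paper's box $A$ does. Take $S:=f^{-1}(Q)$ with $Q:=\prod_i[w_i,f_i(m)]$; for small $t$ this lies in $U$ and contains $m$. Perturb only along $u=-e_j$. If a maximizer $z$ has $f_j(z)>w_j$, then $f(z)-se_j\in Q$ for small $s>0$. Also $(f_i\circ\gamma)^+(0)=0$ for $i\ne j$ and $(f_j\circ\gamma)^+(0)=-1/|\dot c(0)|<0$, so \eqref{eq:sp} applies. Then $(g\circ c)^+(0)=|\dot c(0)|(g\circ\gamma)^+(0)\ge\epsilon|\dot c(0)|>0$ gives the contradiction. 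Hence every maximizer is $f^{-1}(w)$, and $g(m)\le h(w)$.

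The components $\tfrac\delta2$ in your $u$ must be dropped here, since they can push $f_i$ through the upper face $f_i(m)$ of $Q$. They were not free in your version either: concluding $u_i/|\dot c(0)|\le\delta$ needs $|\dot c(0)|\ge\tfrac12$. That bound is true, since $f_j$ is $1$-Lipschitz with $u_j=-1$, giving $|\dot c(0)|\ge1$, but you did not state it.
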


\begin{proof}[Proof of Lemma \ref{lem:dc1}]
The proof is the same as the original one, \cite[Lemma 14.5]{LN19}, using the local differentiability proved in Lemma \ref{lem:diff}.

Let $p_1,\dots,p_n\in B$ be the $(n,\delta_n')$-strainer that defines $f$, where $B$ is a normal ball and $\delta_n'$ is the constant of Proposition \ref{prop:open}.
Fixing $x\in U$, we consider an opposite strainer $q_1,\dots,q_n\in B$, that is, $q_i$ is a point on the extension of a shortest path $p_ix$ beyond $x$.
By the local differentiability, Lemma \ref{lem:diff} and Remark \ref{rem:diff1}, $q_1,\dots,q_n$ is indeed an $(n,\delta_n')$-strainer at $x$.
We show that for a sufficiently small neighborhood $V\subset U$ of $x$,
\[g:=\frac1n\sum_{i=1}^n g_i,\quad g_i:=d(q_i,\cdot)\]
is the desired special function on $V$ (clearly $g$ is $1$-Lipschitz).

Let $\gamma$ be a shortest path such that $\gamma(0)\in V$.
Suppose $(f_i\circ\gamma)^+(0)\le \delta$ for all $1\le i\le n$, where $\delta=\delta_n$ will be determined later.
Then we have
\begin{equation}\label{eq:dc1-1}
(g_i\circ\gamma)^+(0)\ge-2\delta,
\end{equation}
provided $V$ is sufficiently small (indeed, by the local differentiability, Lemma \ref{lem:diff} and Remark \ref{rem:diff1}, and the continuity of angle, Lemma \ref{lem:conti}, the sum of $(f_i\circ\gamma)^+(0)$ and $(g_i\circ\gamma)^+(0)$ must be almost zero). 
On the other hand, by Proposition \ref{prop:bilip}, the strainer map $(g_1,\dots,g_n)$ is $\epsilon$-bi-Lipschitz on $V$ for some $\epsilon$ depending only on $n$.
This implies that $\sum_{i=1}^n|(g_i\circ\gamma)^+(0)|\ge\epsilon$, and hence there exists $1\le i_0\le n$ such that
\begin{equation}\label{eq:dc1-2}
|(g_{i_0}\circ\gamma)^+(0)|\ge\epsilon/n.
\end{equation}
Choosing $\delta\ll\epsilon$ and combining \eqref{eq:dc1-1} and \eqref{eq:dc1-2}, we obtain the desired property \eqref{eq:sp} for some suitable $\epsilon_n$ and $\delta_n$.
\end{proof}

\begin{proof}[Proof of Lemma \ref{lem:dc2}]
The proof is almost identical to the original one, \cite[Section 3, Proposition (A)]{Per94}, with minor modifications mainly based on Lemma \ref{lem:cur3}.

Let $f$ and $g$ be as in the assumption of Lemma \ref{lem:dc2}.
Let $c(t):=f^{-1}(f(x)+tv)$ be a strainer curve with $|v|=1$, as in Definition \ref{dfn:cur}.
To prove that $g\circ f^{-1}$ is semiconcave, it suffices to show that
\begin{gather}
(g\circ c)^+(0)+(g\circ c)^-(0)\le 0,\label{eq:dc2-1}\\
(g\circ c)(t)\le (g\circ c)(0)+(g\circ c)^+(0)t+Ct^2,\label{eq:dc2-2}
\end{gather}
for any small $t>0$, where $C$ is a uniform positive constant independent of $x,t,v$.
The other inequality for $t<0$ and $(g\circ c)^{-}(0)$ similar to \eqref{eq:dc2-2} follows by replacing $v$ with $-v$.
Note that our notation $(g\circ c)^\pm(0)$ represents the directional derivative of $g$ in the direction $c^\pm(0)$ rather than the right/left derivative.

Let $\gamma (t)$ be a (unit-speed) tangential shortest path for $c$ as in Lemma \ref{lem:cur3}.
Then the first inequality \eqref{eq:dc2-1} follows from the second statement of Lemma \ref{lem:cur3}, since $g$ is semiconcave and hence \eqref{eq:dc2-1} with $c$ replaced by $\gamma$ holds (notice that this part is different from the original proof in \cite{Per94}).

Let us prove the second inequality \eqref{eq:dc2-2} in the same way as the original proof.
In what follows we denote by $C$ various (large) positive constants depending only on $n$, the local semiconcavity constant on $U$, the strainer length of $f$, and the semiconcavity of $g$.
Let $f_i$ denote the $i$-th component of $f$, where $1\le i\le n$.
By the semiconcavity of $f_i$ and $g$ and Remark \ref{rem:cur2}, we have
\begin{gather}
(f_i\circ\gamma)(|\dot c(0)|t)\le f_i(x)+|\dot c(0)|(f_i\circ\gamma)^+(0)t+Ct^2,\label{eq:dc2-3}\\
(g\circ\gamma)(|\dot c(0)|t)\le g(x)+|\dot c(0)|(g\circ\gamma)^+(0)t+C t^2.\label{eq:dc2-4}
\end{gather}
By definition, $f_i\circ c(t)=f_i(x)+tv_i$.
This, together with Lemma \ref{lem:cur3} and \eqref{eq:dc2-3}, implies that
\[(f_i\circ\gamma)(|\dot c(0)|t)\le(f_i\circ c)(t)+Ct^2.\]
By the $\epsilon_n$-openness of $f$, Proposition \ref{prop:open}, we can find $y\in U$ such that
\begin{equation}\label{eq:dc2-5}
f_i(y)\le(f_i\circ c)(t),\quad|y,\gamma(|\dot c(0)|t)|\le Ct^2
\end{equation}
for all $1\le i\le n$ (more specifically, we choose $y$ near $\gamma(|\dot c(0)|t)$ such that $f_i(y)=\min\{(f_i\circ c)(t),(f_i\circ\gamma)(|\dot c(0)|t)\}$).
Combining the second inequality of \eqref{eq:dc2-5} with \eqref{eq:dc2-4} by the $1$-Lipschitz property of $g$, we have
\begin{equation}\label{eq:dc2-6}
g(y)\le g(x)+(g\circ c)^+(0)t+Ct^2,
\end{equation}
where we also used Lemma \ref{lem:cur3}.
Moreover, as we will see below, the first inequality of \eqref{eq:dc2-5} implies
\begin{equation}\label{eq:dc2-7}
(g\circ c)(t)\le g(y).
\end{equation}
Combining \eqref{eq:dc2-6} and \eqref{eq:dc2-7} proves \eqref{eq:dc2-2}.

It remains to prove \eqref{eq:dc2-7}.
For fixed $t$, consider a nonempty compact subset
\begin{equation*}
A:=\left\{z\in U\;\middle|\;
\begin{gathered}
f_i(y)\le f_i(z)\le (f_i\circ c)(t)\\
(g\circ c)(t)\le g(z)
\end{gathered}
\right\},
\end{equation*}
where $i$ runs over all integers from $1$ to $n$.
The nonemptiness of $A$ follows from the first inequality of \eqref{eq:dc2-5}, taking $z=c(t)$; the compactness follows from the first inequalities for $f_i$ and the fact that $f$ is a homeomorphism.
Since $f$ is injective, to prove \eqref{eq:dc2-7}, it suffices to show that $f(y)\in f(A)$.
Suppose otherwise.
Since $A$ is compact, one finds $z\in A$ such that $f(z)\neq f(y)$ is closest to $f(y)$.

Set $\tilde v:=f(y)-f(z)$ and consider a new strainer curve $\tilde c(s):=f^{-1}(f(z)+s\tilde v)$ passing through $z$.
Clearly, for sufficiently small $s>0$, we have
\begin{equation}\label{eq:dc2-8}
f_i(y)\le (f_i\circ\tilde c)(s)\le f_i(z)
\end{equation}
for all $1\le i\le n$.
In particular, $(f_i\circ\tilde c)^+(0)\le 0$.
Let $\tilde\gamma$ be a tangential shortest path for $\tilde c$ as in Lemma \ref{lem:cur3}.
By the second statement of Lemma \ref{lem:cur3}, we have
\[(f_i\circ\tilde\gamma)^+(0)\le 0.\]
By the assumption that $g$ is an $(\epsilon,\delta)$-special function, \eqref{eq:sp}, we get $(g\circ\tilde\gamma)^+(0)\ge\epsilon$.
By Lemma \ref{lem:cur3} again, this implies $(g\circ\tilde c)^+(0)>0$.
Thus, for sufficiently small $s>0$, we have
\begin{equation}\label{eq:dc2-9}
(g\circ\tilde c)(s)>g(z).
\end{equation}
The inequalities \eqref{eq:dc2-8} and \eqref{eq:dc2-9}, together with the fact that $z\in A$, imply $\tilde c(s)\in A$.
However, $(f\circ\tilde c)(s)$ is closer to $f(y)$ than $f(z)$, which is a contradiction.
\end{proof}

\begin{proof}[Proof of Proposition \ref{prop:dc}]
The proof is the same as in \cite[Section 3]{Per94}, \cite[Section 14.6]{LN19}.
Let $f:U\to\mathbb R^n$ be a strainer chart on $X$.
By the local semiconcavity, $f$ is a DC map.
It remains to show that $f^{-1}$ is a DC map.
We show that if $h$ is a semiconcave function on $U$, then $h\circ f^{-1}$ is a DC function.
For any fixed $x\in U$, let $g$ be an $(\epsilon_n,\delta_n)$-special function around $x$ as in Lemma \ref{lem:dc1}.
Consider the decomposition
\[h\circ f^{-1}=(h\circ f^{-1}+Lg\circ f^{-1})-Lg\circ f^{-1},\]
where $L\gg1$.
By Lemma \ref{lem:dc2}, the second term $Lg\circ f^{-1}$ is a semiconcave function.
Furthermore, if $L$ is large enough (compared to the local Lipschitz constant of $h$), then $(h+Lg)$ is also a $(1,\delta_n)$-special function for $f$.
Therefore, by Lemma \ref{lem:dc2}, the first term $(h+Lg)\circ f^{-1}$ is also semiconcave, as desired.
\end{proof}

We conclude this subsection with the following summary.

\begin{cor}\label{cor:dc}
Any locally semiconcave G-space admits canonical $C^{1,1/2}$ and DC structures defined by strainer charts.
Here, canonicity means that any squared distance function on each chart is a $C^{1,1/2}$ and DC function.
\end{cor}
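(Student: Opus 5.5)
The corollary is a summary of what has already been proved, so I will simply assemble the pieces. By Corollary \ref{cor:str}, every point of $X$ lies in the domain of a strainer chart of dimension $n$, the strainer number. The collection of all strainer charts therefore forms an atlas of the topological (indeed Lipschitz) manifold $X$. By Proposition \ref{prop:c1}, every coordinate transformation between strainer charts is a $C^{1,1/2}$ diffeomorphism, so this atlas is $C^{1,1/2}$. By Proposition \ref{prop:dc}, every coordinate transformation is a DC isomorphism, so the same atlas is also DC.

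For canonicity, fix $p\in X$ and a strainer chart $f:U\to\mathbb R^n$. It suffices to show that $d(p,\cdot)^2\circ f^{-1}$ is $C^{1,1/2}$ and DC near each point $f(x)$. Since both properties are local, I first shrink $U$ around $x$ so that it lies in a normal ball around $p$; this is allowed because restrictions of strainer charts are again strainer charts.

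\begin{itemize}
\item Away from $p$, Proposition \ref{prop:c1} shows that $d(p,\cdot)\circ f^{-1}$ is $C^{1,1/2}$. Squaring a positive $C^{1,1/2}$ function keeps it $C^{1,1/2}$.
\item For the DC property, local semiconcavity says that $d(p,\cdot)^2$ is a semiconcave function in the sense of Section \ref{sec:dc}; this is Definition \ref{dfn:conc} together with Remark \ref{rem:sq}. Proposition \ref{prop:dc} states that $f^{-1}$ is a DC map, so $d(p,\cdot)^2\circ f^{-1}$ is DC. This argument includes the point $p$ itself, where the unsquared distance may fail to be DC.
\end{itemize}

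The main obstacle, which is modest, is the behaviour of the squared distance at $p$ itself for the $C^{1,1/2}$ claim, since Proposition \ref{prop:c1} excludes $p$. I would handle it as follows. The function $h=d(p,\cdot)^2\circ f^{-1}$ satisfies $h\le C|y-f(p)|^2$ because $f$ is bi-Lipschitz. Away from $f(p)$, its differential equals $2\,d(p,\cdot)\,d(d(p,\cdot)\circ f^{-1})$, where the second factor is bounded by Lemma \ref{lem:cur4}. Hence $h$ is differentiable at $f(p)$ with zero differential, and $|dh(y)|\le C|y-f(p)|$.

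To get Hölder continuity of $dh$ near $f(p)$, I would rerun the estimate in the proof of Proposition \ref{prop:c1}. The constant there degrades like $1/|px|$, but this is compensated by the extra factor $d(p,\cdot)$ in $dh$.

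Alternatively, if the intended meaning of canonicity only concerns points away from the base point, as in Proposition \ref{prop:c1}, this step can be omitted and the corollary follows immediately from the two propositions.
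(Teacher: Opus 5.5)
Your proposal is correct and follows the paper. The paper gives no separate proof: it reads the corollary off Corollary \ref{cor:str} and Propositions \ref{prop:c1} and \ref{prop:dc}, with canonicity understood locally (charts inside a normal ball around $p$), exactly as you use it. Your extra treatment of the base point $p$ in the $C^{1,1/2}$ claim is a detail the paper leaves implicit, and it does work: the $1/r$ loss in the H\"older estimate for $d\bigl(d(p,\cdot)\circ f^{-1}\bigr)$, where $r=\min\{|px|,|py|\}$, is absorbed by the factor $d(p,\cdot)$ in $dh$ once you take $t\approx(r\,|xy|)^{1/2}$.
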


\subsection{Finsler metric}\label{sec:fin}

Finally, we construct a Finsler metric for a locally semiconcave G-space in the same way as in \cite[Chapter 3]{Po98}, except for the $1/2$-H\"older continuity in Lemma \ref{lem:fin2}.

Let $X$ be a $C^1$ manifold and denote by $TX$ its tangent bundle.
By definition, a \emph{Finsler metric} is a function $F:TX\to\mathbb R$ such that its restriction to each fiber $T_xX$ is a norm for any $x\in X$.

We define a Finsler metric via the local strainer coordinates, pulling back the infinitesimal metric on $X$.

\begin{dfn}\label{dfn:fin}
Let $X$ be a locally semiconcave G-space and $f:U\to\mathbb R^n$ a strainer chart on $X$.
For any $x\in U$ and $v\in\mathbb R^n$ ($=T_{f(x)}\mathbb R^n$), we define
\[F(x,v):=|\dot c(0)|=\lim_{t\to0}\frac{|x,c(t)|}{|t|},\]
where $c(t)=f^{-1}(f(x)+tv)$ is a strainer curve as in Definition \ref{dfn:cur}.
\end{dfn}

See Lemma \ref{lem:cur2} for the existence of $|\dot c(0)|$.
To be more precise, $F(x,v)$ should be written as $F(f(x),v)$, but we use the above notation for simplicity.

The following is the main result of this subsection.

\begin{prop}\label{prop:fin}
The above function $F$ defines a continuous Finsler metric on $X$ that is compatible with the original distance.
Furthermore, $F$ is locally $1/2$-H\"older continuous in the base direction.
\end{prop}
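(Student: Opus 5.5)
The plan is to verify, in order, that $F(x,\cdot)$ is a norm, that $F$ is jointly continuous and $1/2$-H\"older in the base variable, that $F$ does not depend on the chart, and that $F$ induces $d$. The key tool is to express $F$ as a supremum of derivatives of distance functions along strainer curves.

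\emph{Norm property.} Positivity and the two-sided bound $\epsilon_n|v|\le F(x,v)\le\epsilon_n^{-1}|v|$ come from Remark \ref{rem:cur2}. Absolute homogeneity $F(x,av)=|a|F(x,v)$ is immediate: the strainer curve in direction $av$ is the reparametrization $t\mapsto c(at)$, and Lemma \ref{lem:cur2} allows negative $t$.

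For subadditivity I will show
\[F(x,v)=\max_{p}\,(d(p,\cdot)\circ c)'(0),\qquad c(t)=f^{-1}(f(x)+tv),\]
where $p$ ranges over a small sphere around $x$ inside a normal ball.
\begin{itemize}
\item Upper bound: $d(p,\cdot)$ is $1$-Lipschitz, so each such derivative is at most $F(x,v)$.
\item Attainment: take a tangential shortest path $\gamma$ for $c$ (Lemma \ref{lem:cur3}) and put $p=\gamma(-r)$. Then $(d(p,\cdot)\circ\gamma)^+(0)=1$, so Lemma \ref{lem:cur3} gives $(d(p,\cdot)\circ c)'(0)=|\dot c(0)|=F(x,v)$.
\end{itemize}
By Proposition \ref{prop:c1}, $d(p,\cdot)\circ f^{-1}$ is $C^1$ near $f(x)$, so $v\mapsto(d(p,\cdot)\circ c)'(0)$ is its differential, a linear functional. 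Hence $F(x,\cdot)$ is a supremum of linear functionals, which makes it convex and therefore a norm. I expect this to be the main obstacle. The delicate points are that the attaining $p$ must lie in a normal ball where Proposition \ref{prop:c1} applies, and that the tangential path $\gamma$ extends backwards far enough to reach $p$; the G-space extension property provides the latter.

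\emph{Continuity and H\"older regularity.} Joint continuity in $(x,v)$ is the last assertion of Lemma \ref{lem:cur2}. For the H\"older estimate I will copy the proof of Proposition \ref{prop:c1}, with $|x,c_x(t)|/|t|$ in place of difference quotients of $g$. By Lemma \ref{lem:cur2},
\[|F(x,v)-F(y,v)|\le\frac{\bigl||x,c_x(t)|-|y,c_y(t)|\bigr|}{|t|}+2C|t||v|^2\le\frac{2\epsilon_n^{-2}|xy|}{|t|}+2C|t||v|^2.\]
Here the $\epsilon_n^{-2}$ factor comes from the bi-Lipschitz property of $f$: $f(c_x(t))-f(c_y(t))=f(x)-f(y)$. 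Choosing $t=|xy|^{1/2}$ gives the local $1/2$-H\"older bound.

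\emph{Chart independence.} If $h$ is the $C^1$ transition between two strainer charts, then the two strainer curves with corresponding initial vectors $v$ and $dh(v)$ differ by $o(t)$, by bi-Lipschitz control. So their limits $|x,c(t)|/|t|$ coincide, and $F$ is a well-defined function on $TX$.

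\emph{Compatibility with $d$.} For a curve $\sigma$ that is Lipschitz in a chart, at every point where $f\circ\sigma$ is differentiable the bi-Lipschitz property gives
\[\bigl||\sigma(t)\sigma(t+h)|-|\sigma(t),c(h)|\bigr|\le\epsilon_n^{-1}\bigl|f\sigma(t+h)-f\sigma(t)-h(f\circ\sigma)'(t)\bigr|=o(h),\]
where $c$ is the strainer curve through $\sigma(t)$ in direction $(f\circ\sigma)'(t)$. Hence the metric derivative of $\sigma$ equals $F(\sigma,(f\circ\sigma)')$ almost everywhere. The metric length of $\sigma$ is therefore $\int F(\sigma,\dot\sigma)\,dt$. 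Finally, $X$ is geodesic and shortest paths are Lipschitz curves in charts, so the infimum of $F$-lengths equals $d$.
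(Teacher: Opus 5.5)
Your proposal is correct. The continuity, chart-independence and $1/2$-H\"older parts coincide with the paper's Lemmas \ref{lem:fin1} and \ref{lem:fin2}. The triangle inequality and the compatibility with $d$, however, are proved by different routes.

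\textbf{Triangle inequality.} The paper (Lemma \ref{lem:fin3}) works only at the level of Lemma \ref{lem:cur2}. It runs along $c(t)=f^{-1}(f(x)+tv)$ and then along the strainer curve from $c(t)$ in direction $w$, which ends at $f^{-1}(f(x)+t(v+w))$. It applies the triangle inequality for $d$ and identifies the second leg's quotient with $F(x,w)$, using the base-point uniformity of Lemma \ref{lem:cur2} and the continuity of $F$.

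You instead produce, for each direction, a supporting linear functional $\ell_p=D(d(p,\cdot)\circ f^{-1})_{f(x)}\le F(x,\cdot)$ that is attained there. Only the one attained at $v+w$ is actually needed. This requires three inputs:
\begin{itemize}
\item the tangential shortest path of Lemma \ref{lem:cur3};
\item its backward extension, from the G-space property;
\item the $C^1$ regularity of Proposition \ref{prop:c1}.
\end{itemize}
That is more machinery, but it yields a structural fact the paper does not state. $F(x,\cdot)$ is the support function of the set of differentials at $x$ of distance functions, each of dual $F$-norm at most one. This is a Finsler form of ``distance functions have unit gradient''.

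\textbf{Compatibility with $d$.} The paper compares Riemann sums along $C^1$ curves via the uniform estimate of Lemma \ref{lem:cur2}. You compute the metric derivative pointwise almost everywhere (Rademacher) and use $L(\sigma)=\int|\dot\sigma|$ for Lipschitz curves. Your route covers all Lipschitz curves and needs only pointwise convergence.

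If you adopt the paper's convention in Lemma \ref{lem:fin4}, where $\tilde d$ is an infimum over $C^1$ curves, add one line for $\tilde d\le d$. Either note that shortest paths are $C^1$ in strainer charts, or approximate Lipschitz curves by $C^1$ ones. For the first option, the derivative of $s\mapsto d(p_i,\gamma(s))$ is minus the cosine of an angle; it is two-sided by Lemma \ref{lem:diff} and continuous in $s$ by Lemma \ref{lem:conti}. The second option is harmless since $F$ is continuous with $F(x,v)\le\epsilon_n^{-1}|v|$.
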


The proof is carried out by checking the necessary conditions one by one in the following four lemmas.
We first check the following.

\begin{lem}\label{lem:fin1}
The function $F$ is well-defined and continuous on $TX$.
\end{lem}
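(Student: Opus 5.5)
The plan has two parts. First, the value $F(x,v)$ does not depend on the choice of strainer chart, so that $F$ descends to a well-defined function on $TX$ for the $C^1$ (indeed $C^{1,1/2}$) structure of Corollary \ref{cor:dc}. Second, $F$ is jointly continuous in $(x,v)$ inside a single chart. Existence of the limit defining $F(x,v)$ is already Lemma \ref{lem:cur2}, so nothing needs to be done there.

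\emph{Chart independence.} Let $f:U\to\mathbb R^n$ and $h:V\to\mathbb R^n$ be two strainer charts with $x\in U\cap V$. A tangent vector represented by $v$ in the chart $f$ is represented by $w:=D(h\circ f^{-1})(f(x))\,v$ in the chart $h$. Set $c(t)=f^{-1}(f(x)+tv)$ and $\tilde c(t)=h^{-1}(h(x)+tw)$.
\begin{itemize}
\item By Proposition \ref{prop:c1}, the transition map $h\circ f^{-1}$ is $C^1$ near $f(x)$. Hence $h(c(t))=h\circ f^{-1}(f(x)+tv)=h(x)+tw+o(t)$.
\item Therefore $|h(c(t))-h(\tilde c(t))|=o(t)$.
\item Since $h$ is $\epsilon_n$-bi-Lipschitz (Proposition \ref{prop:bilip}), this gives $|c(t),\tilde c(t)|\le\epsilon_n^{-1}o(t)=o(t)$.
\item By the triangle inequality, $\bigl||x,c(t)|-|x,\tilde c(t)|\bigr|=o(t)$. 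Dividing by $|t|$ and letting $t\to0$ shows that the two limits given by Lemma \ref{lem:cur2} coincide.
\end{itemize}
So $F$ is a well-defined function on $TX$. The case $v=0$ is trivial, since both sides vanish.

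\emph{Continuity.} It suffices to prove continuity in a single chart, because the induced coordinate changes of $TX$, namely $(f(x),v)\mapsto(h(x),D(h\circ f^{-1})(f(x))v)$, are homeomorphisms.
\begin{itemize}
\item For each fixed small $t\ne0$, the quotient $(x,v)\mapsto|x,f^{-1}(f(x)+tv)|/|t|$ is continuous on a neighborhood of a given $(x_0,v_0)$. This holds because $f^{-1}$ is continuous and the distance is continuous.
\item By Lemma \ref{lem:cur2}, this quotient differs from $F(x,v)$ by less than $C|t||v|^2$.
\item The constant $C$ depends only on the local semiconcavity constant on $U$, on $n$, and on the strainer length of $f$.
\end{itemize}
All three of these data are uniform on a small neighborhood of $x_0$ inside a fixed chart, by Remark \ref{rem:stropen}, and $|v|$ is bounded near $v_0$. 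Hence the convergence is uniform on a neighborhood of $(x_0,v_0)$, and $F$ is continuous there as a uniform limit of continuous functions.

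The only point requiring care is this uniformity of the error in Lemma \ref{lem:cur2} jointly in $(x,v)$, rather than for $x$ fixed. In particular, we must check that shrinking the chart around $x_0$ keeps the strainer length bounded below, which follows from the openness of strainers. Everything else reduces directly to Propositions \ref{prop:c1} and \ref{prop:bilip}.
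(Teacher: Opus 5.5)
Your proposal is correct and matches the paper's proof: chart independence comes from the $C^1$ transition map (Proposition \ref{prop:c1}), the bi-Lipschitz property of the second chart, and the triangle inequality, while continuity is the uniform-limit statement already contained in Lemma \ref{lem:cur2}. The only difference is that you spell out the joint uniformity of the error in $(x,v)$, which the paper leaves implicit by citing Lemma \ref{lem:cur2}.
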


\begin{proof}
Since the continuity of $|\dot c(0)|$ was already proved in Lemma \ref{lem:cur2}, it suffices to check the well-definedness of $F$.

Let $f:U\to\mathbb R^n$ be a strainer chart around $x\in X$ and $c(t)=f^{-1}(f(x)+tv)$ a strainer curve in the direction $v\in\mathbb R^n$.
Let $g:V\to\mathbb R^n$ be another strainer chart around $x$.
Set
\[\tilde c(t):=g^{-1}(g(x)+t\tilde v),\quad\tilde v:=d(g\circ f^{-1})(v)=(g\circ c)'(0).\]
We show that
\begin{equation}\label{eq:fin1}
|c(t),\tilde c(t)|<\delta(t)|t|,
\end{equation}
where $\delta$ is a positive function such that $\delta(t)\to0$ as $t\to0$.
Then, by the triangle inequality, we have $|\dot c(0)|=|\dot{\tilde c}(0)|$, as desired.
Since $g$ is bi-Lipschitz, \eqref{eq:fin1} is equivalent to $|g\circ c(t), g(x)+t\tilde v|<\delta(t)|t|$.
The latter immediately follows from the $C^1$ smoothness of $g\circ f^{-1}$ proved in Proposition \ref{prop:c1}.
\end{proof}

Using the explicit error function, one can improve the continuity of $F$ as follows.

\begin{lem}\label{lem:fin2}
$F$ is locally $1/2$-H\"older continuous in the base direction of $TX$: that is, the function $x\mapsto F(x,v)$ is locally $1/2$-H\"older continuous for any fixed $v\in\mathbb R^n$.
\end{lem}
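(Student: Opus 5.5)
The plan mirrors the proof of Proposition \ref{prop:c1}: I will compare the finite difference quotients defining $F(x,v)$ and $F(y,v)$ at a scale $t$ depending on $|xy|$, and then optimize $t$. Fix a strainer chart $f:U\to\mathbb R^n$ and $v\in\mathbb R^n$. Let $x,y\in U$ be close, and let $c_x(t)=f^{-1}(f(x)+tv)$ and $c_y(t)=f^{-1}(f(y)+tv)$ be the strainer curves through $x$ and $y$ in the same direction $v$.

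The first step uses the explicit error bound in Lemma \ref{lem:cur2}:
\[\left|F(x,v)-\frac{|x,c_x(t)|}{|t|}\right|<C|t||v|^2,\qquad \left|F(y,v)-\frac{|y,c_y(t)|}{|t|}\right|<C|t||v|^2.\]
I will note, as in the proof of Lemma \ref{lem:cur4}, that the constant $C$ depends only on the local semiconcavity constant on $U$, on $n$, and on the strainer length. By Remark \ref{rem:stropen}, the strainer length is bounded below uniformly for points near a fixed base point. Hence $C$ can be taken uniform for all nearby $x,y$.

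The second step compares the two quotients directly. By the triangle inequality,
\[\bigl||x,c_x(t)|-|y,c_y(t)|\bigr|\le|xy|+|c_x(t),c_y(t)|.\]
Since $f(c_x(t))-f(c_y(t))=f(x)-f(y)$ and $f$ is $\epsilon_n$-bi-Lipschitz (Proposition \ref{prop:bilip}), we get $|c_x(t),c_y(t)|\le\epsilon_n^{-2}|xy|$. Dividing by $|t|$, the two quotients differ by at most $(1+\epsilon_n^{-2})|xy|/|t|$.

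Combining the two steps gives
\[|F(x,v)-F(y,v)|\le(1+\epsilon_n^{-2})\frac{|xy|}{|t|}+2C|t||v|^2.\]
I then choose $t=|xy|^{1/2}$, which is admissible once $|xy|$ is small. This yields $|F(x,v)-F(y,v)|\le C'|xy|^{1/2}$, with $C'$ depending on $|v|$ and the local data.

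The only delicate point is the uniformity of the error constant in Lemma \ref{lem:cur2} over a neighborhood. That lemma rests on Lemma \ref{lem:cur1}, which in turn uses the explicit form of Proposition \ref{prop:unif} with denominators $|p_ix|$. I will check that these stay bounded below near the base point, so the explicit linear error $C|t||v|$ holds uniformly. Everything else is routine.
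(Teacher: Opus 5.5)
Your proposal is correct and is essentially the paper's own proof. You compare the difference quotients $|x,c_x(t)|/|t|$ and $|y,c_y(t)|/|t|$ using the explicit error of Lemma \ref{lem:cur2}, bound their difference by $|xy|/|t|$ up to a bi-Lipschitz constant, and set $t=|xy|^{1/2}$. Your constant $(1+\epsilon_n^{-2})$ is slightly sharper than the paper's $2\epsilon_n^{-2}$, and your explicit check that $C$ is uniform near the base point (which the paper leaves implicit) is fine.
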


\begin{proof}
The proof is similar to that of Proposition \ref{prop:c1}.
Let $f:U\to\mathbb R^n$ be a strainer chart on $X$.
Let $c_x(t)=f^{-1}(f(x)+tv)$ and $c_y(t)=f^{-1}(f(y)+tv)$ be strainer curves passing through $x,y\in U$ in the same direction $v\in\mathbb R^n$, respectively.
By Lemma \ref{lem:cur2}, we have
\begin{equation}\label{eq:fin2-1}
\left||\dot c_x(0)|-\frac{|x,c_x(t)|}{|t|}\right|<\delta(t)|v|,\quad\left||\dot c_y(0)|-\frac{|y,c_y(t)|}{|t|}\right|<\delta(t)|v|,
\end{equation}
By the $\epsilon_n$-bi-Lipschitz property of $f$ (Proposition \ref{prop:bilip}), we have
\begin{equation}\label{eq:fin2-2}
\left|\frac{|x,c_x(t)|}{|t|}-\frac{|y,c_y(t)|}{|t|}\right|\le\frac1{|t|}{(|xy|+|c_x(t),c_y(t)|)}\le\frac{2\epsilon_n^{-2}}{|t|}|xy|.
\end{equation}
Combining \eqref{eq:fin2-1} and \eqref{eq:fin2-2}, we obtain
\[\left||\dot c_x(0)|-|\dot c_y(0)|\right|<\frac{2\epsilon_n^{-2}}{|t|}|xy|+2\delta(t)|v|.\]
Since $\delta(t)=C|t||v|$ by Lemma \ref{lem:cur2}, letting $t:=|xy|^{1/2}$ shows the claim.
\end{proof}

Next we show the following.

\begin{lem}\label{lem:fin3}
The function $F$ is a (reversible) norm on $T_xX$ for any $x\in X$.
\end{lem}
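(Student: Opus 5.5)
We must show that, for fixed $x$, the map $v\mapsto F(x,v)$ is positive definite, absolutely homogeneous ($F(x,\lambda v)=|\lambda|F(x,v)$ for all real $\lambda$, which is what ``reversible'' means here) and subadditive. We check the three properties in that order, working in a fixed strainer chart $f:U\to\mathbb R^n$.

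\textbf{Positivity and homogeneity.} Positivity is immediate from Remark \ref{rem:cur2}: $F(x,v)\ge\epsilon_n|v|>0$ for $v\ne0$. For homogeneity, let $c_{\lambda v}(t)=f^{-1}(f(x)+t\lambda v)=c_v(\lambda t)$. Then
\[\frac{|x,c_{\lambda v}(t)|}{|t|}=|\lambda|\,\frac{|x,c_v(\lambda t)|}{|\lambda t|}.\]
Lemma \ref{lem:cur2} asserts that the limit defining $|\dot c(0)|$ is taken as $t\to0$ from both sides, so letting $t\to0$ gives $F(x,\lambda v)=|\lambda|F(x,v)$ for negative $\lambda$ as well. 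This two-sidedness is the reversibility, and it ultimately rests on Lemma \ref{lem:diff} through the $s<0$ case of Lemma \ref{lem:cur1}.

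\textbf{Triangle inequality.} Here we want $F(x,v+w)\le F(x,v)+F(x,w)$. The plan is to compare the strainer curve $c_{v+w}(t)$ with a two-step path: first go along $c_v$ to $y_t:=c_v(t)$, then along the strainer curve from $y_t$ in direction $w$, namely $c^{y_t}_w(s):=f^{-1}(f(y_t)+sw)$. Since $f(y_t)+tw=f(x)+t(v+w)$, we have exactly $c^{y_t}_w(t)=c_{v+w}(t)$. The triangle inequality in $X$ then gives
\[|x,c_{v+w}(t)|\le|x,c_v(t)|+|y_t,c^{y_t}_w(t)|.\]
Dividing by $t$, the first term tends to $F(x,v)$. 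For the second term, Lemma \ref{lem:cur2} gives $\bigl|\,|y_t,c^{y_t}_w(t)|/t-F(y_t,w)\bigr|<C t|w|^2$, and the constant $C$ is uniform for base points near $x$, since it depends only on the semiconcavity constant, the strainer number and the strainer length, all of which are locally uniform. Hence
\[|y_t,c^{y_t}_w(t)|/t=F(y_t,w)+O(t).\]
By continuity of $F$ in the base variable (Lemma \ref{lem:fin1}, or the H\"older bound of Lemma \ref{lem:fin2}), $F(y_t,w)\to F(x,w)$ as $y_t\to x$. Letting $t\to0^+$ yields subadditivity.

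\textbf{Main obstacle.} The delicate point is the uniformity in the base point of the error estimate in Lemma \ref{lem:cur2}, which is what lets us replace $|y_t,c^{y_t}_w(t)|/t$ by $F(y_t,w)$ at the same scale $t$ at which $y_t$ moves. I would check this by tracing the constants in Lemma \ref{lem:cur1}: they involve only $\lambda$, $n$ and a lower bound on $|p_i\,\cdot|$ over a neighborhood. Since $F$ is independent of the chart (Lemma \ref{lem:fin1}), the resulting function is a genuine norm on $T_xX$.
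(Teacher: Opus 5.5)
Your proposal is correct and follows essentially the same route as the paper. The paper gets positivity from the bi-Lipschitz property and homogeneity directly from the definition; your explicit argument for negative scalars, using the two-sided limit in Lemma \ref{lem:cur2}, spells out what the paper leaves implicit. For subadditivity, the paper uses the same two-step curve $f^{-1}(f(c(t))+tw)=f^{-1}(f(x)+t(v+w))$, the estimate of Lemma \ref{lem:cur2} (uniform in the base point), and the continuity of $F$, exactly as you do.
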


\begin{proof}
In what follows, we fix $x$ and consider the restriction of $F$ to $T_xX$.
Since a strainer chart is bi-Lipschitz, $F$ is positive definite.
By definition, $F$ is absolutely homogeneous.
It remains to prove the triangle inequality.
Let $v,w\in T_xX$.
For small $t>0$, set
\begin{align*}
c(t)&:=f^{-1}(f(x)+tv),\\
\tilde c(t)&:=f^{-1}(f(c(t))+tw)=f^{-1}(f(x)+t(v+w)).
\end{align*}
See Figure \ref{fig:fin3}.

\begin{figure}[ht]
\centering
\begin{tikzpicture}
\coordinate[label=below:$x$](x)at(0,0);
\coordinate[label=below:$c(t)$](ct)at(2,0);
\coordinate[label=above:$\tilde c(t)$](ct')at(3,2);

\coordinate[label=below:$v$](v)at(1,0);
\coordinate[label=below right:$w$](w)at(2.5,1);
\coordinate[label=above left:$v+w$](v+w)at(1.5,1);

\draw[dashed](x)to(ct);
\draw[dashed](ct)to(ct');
\draw[dashed](x)to(ct');

\draw[->, >=Stealth](x)to(v);
\draw[->, >=Stealth](ct)to(w);
\draw[->, >=Stealth](x)to(v+w);

\fill(x)circle(1.5pt);
\fill(ct)circle(1.5pt);
\fill(ct')circle(1.5pt);
\end{tikzpicture}
\caption{}\label{fig:fin3}
\end{figure}

By the triangle inequality for the original distance, we have
\[|x,\tilde c(t)|\le|x,c(t)|+|c(t),\tilde c(t)|.\]
Dividing both sides by $t$ and taking $t\to0$, we get
\begin{equation}\label{eq:fin3-1}
F(x,v+w)\le F(x,v)+\lim_{t\to0}\frac{|c(t),\tilde c(t)|}{t}.
\end{equation}
Furthermore, by the uniform convergence of norm in Lemma \ref{lem:cur2}, we see that
\[\left|\frac{|c(t),\tilde c(t)|}{t}-F(c(t),w)\right|<\delta(t)|w|.\]
Taking $t\to0$ and using the continuity of $F$, we obtain
\begin{equation}\label{eq:fin3-2}
\lim_{t\to0}\frac{|c(t),\tilde c(t)|}{t}=F(x,w).
\end{equation}
Combining \eqref{eq:fin3-1} and \eqref{eq:fin3-2} completes the proof.
\end{proof}

Finally, we prove the compatibility of the Finsler metric with the original distance.

\begin{lem}\label{lem:fin4}
The Finsler metric $F$ induces the original distance $d$ of $X$:
that is, if we define a new distance $\tilde d$ of $X$ by
\[\tilde d(p,q):=\inf_{c}\tilde L(c),\quad \tilde L(c):=\int_0^1F(c(t),c'(t))dt,\]
where $c:[0,1]\to X$ runs over all $C^1$ curves between $p$ and $q$, then we have $d=\tilde d$.
\end{lem}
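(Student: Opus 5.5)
We will prove the two inequalities $\tilde d\le d$ and $d\le\tilde d$ separately. Both reduce to comparing, for a curve $c$, the integral $\tilde L(c)$ with the metric length $L(c)$ with respect to $d$. Since strainer charts are bi-Lipschitz (Proposition \ref{prop:bilip}), every $C^1$ curve in coordinates is Lipschitz in $X$. The core claim is therefore the following: for any $C^1$ curve $c$ (in a chart, and then by concatenation globally), its metric derivative satisfies
\[\lim_{h\to0}\frac{|c(t),c(t+h)|}{|h|}=F(c(t),c'(t))\quad\text{for every }t.\]
Granting this, the standard formula $L(c)=\int|\dot c|(t)\,dt$ for Lipschitz curves in metric spaces gives $L(c)=\tilde L(c)$. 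Hence $d(p,q)\le L(c)=\tilde L(c)$ for every admissible $c$, which yields $d\le\tilde d$.

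To prove the claim, compare $c(t+h)$ with the strainer curve $c_t(h):=f^{-1}(f(c(t))+hc'(t))$, where $f$ is a strainer chart. By $C^1$-ness of $f\circ c$ we have $|f(c(t+h))-f(c_t(h))|=o(h)$, and the bi-Lipschitz property of $f$ transfers this to $|c(t+h),c_t(h)|=o(h)$. Then Lemma \ref{lem:cur2} gives $|c(t),c_t(h)|/|h|\to F(c(t),c'(t))$, and the triangle inequality finishes the argument.

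For the reverse inequality $\tilde d\le d$, it suffices to work locally: take $p,q$ close, joined by a shortest path $\gamma$ inside a chart. Since $d$ is a length metric, splitting $\gamma$ into short pieces shows that local equality implies global equality. The difficulty is that $\gamma$ need not be $C^1$ in coordinates a priori, so $\tilde L(\gamma)$ is not directly defined. We plan to approximate $\gamma$ by piecewise strainer-affine curves. Subdivide $\gamma$ at points $x_0=p,x_1,\dots,x_N=q$ with mesh $\tau$, and join consecutive points by the coordinate segment $\sigma_k(s)=f^{-1}(f(x_k)+s(f(x_{k+1})-f(x_k)))$, $s\in[0,1]$. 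Each $\sigma_k$ is $C^1$ (after smoothing corners, which costs arbitrarily little since $F$ is continuous and bounded on compact sets), and its $\tilde L$ is an integral of $F$ along it.

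The main task is then to show $\tilde L(\sigma_k)\le|x_kx_{k+1}|(1+o(1))$ uniformly as $\tau\to0$. Writing $v_k=f(x_{k+1})-f(x_k)$, so that $|v_k|\sim\tau$, we have the following. By the uniform estimate of Lemma \ref{lem:cur2} (error $C|t||v|$, uniform in base point), $|x_k,\sigma_k(1)|=F(x_k,v_k)+O(|v_k|^2)$. By continuity of $F$ (Lemma \ref{lem:fin1}) together with homogeneity, $F(\sigma_k(s),v_k)=F(x_k,v_k)+o(|v_k|)$ uniformly in $s$. Combining these gives $\tilde L(\sigma_k)=\int_0^1F(\sigma_k(s),v_k)\,ds\le|x_kx_{k+1}|+o(\tau)$. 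Summing over the $N\sim|pq|/\tau$ pieces yields $\tilde d(p,q)\le|pq|+o(1)$, hence $\tilde d\le d$.

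The main obstacle is ensuring that all error terms are uniform, so that $N\cdot o(\tau)\to0$. This is exactly why we rely on the uniform, base-point-independent bound of Lemma \ref{lem:cur2}, which derives from Proposition \ref{prop:unif}, and on uniform continuity of $F$ on a compact subset of $TX$, rather than on pointwise limits.
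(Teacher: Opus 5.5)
Your proof is correct. For $d\le\tilde d$ you use the same idea as the paper: compare $c(t+h)$ with the strainer curve in direction $c'(t)$ and invoke Lemma \ref{lem:cur2}. The only difference there is packaging. You get the metric derivative pointwise and then cite the formula $L(c)=\int|\dot c|$ for Lipschitz curves, whereas the paper runs a Riemann-sum argument using the uniform error of Lemma \ref{lem:cur2}.

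The real divergence is in $\tilde d\le d$. The paper settles it in one line: shortest paths are themselves $C^1$ curves in strainer coordinates, so $\tilde d(p,q)\le\tilde L(\gamma)=L(\gamma)=d(p,q)$. The difficulty you raise therefore does not actually occur here:
\begin{itemize}
\item each coordinate $t\mapsto|p_i\gamma(t)|$ is differentiable, by local differentiability (Lemma \ref{lem:diff});
\item its derivative, minus the cosine of the angle at $\gamma(t)$ from $p_i$ to the forward part of $\gamma$, is continuous in $t$ by Lemma \ref{lem:conti};
\item the velocity is nonzero by the bi-Lipschitz property.
\end{itemize}

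Your workaround is nonetheless valid. You replace $\gamma$ by piecewise strainer-affine chords and control each chord using two facts. The first is the rescaled form of Lemma \ref{lem:cur2}, $\bigl|F(x,u)-|x,f^{-1}(f(x)+u)|\bigr|\le C|u|^2$, obtained by applying the lemma to the unit direction $u/|u|$ at time $|u|$. The second is uniform continuity of $F$ on compact sets. The corners cost nothing: reparametrizing each piece to have zero speed at the breakpoints leaves $\tilde L$ unchanged, by homogeneity of $F$.

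What your route buys is that it needs no regularity of shortest paths. It uses only a locally uniform $o(|u|)$ first-order estimate and continuity of $F$, so it would work in any chart where those hold. What it costs is extra bookkeeping that the paper avoids: keeping the chords inside $f(U)$, and making the errors uniform so that $N\cdot o(\tau)\to0$.
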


\begin{proof}
Since $d$ is an intrinsic distance, it is written as
\[d(p,q)=\inf_{c}L(c),\quad L(c):=\sup\sum_{i=1}^Nd(c(t_{i-1}),c(t_i)),\]
where $c:[0,1]\to X$ runs over all (not necessarily $C^1$) curves between $p$ and $q$, and the supremum is taken over all finite subdivisions $0=t_0<t_1<\dots<t_N=1$.
Since any shortest path in $X$ is a $C^1$ curve by Proposition \ref{prop:c1}, in order to prove that $d=\tilde d$, it suffices to show that $L(c)=\tilde L(c)$ for any $C^1$ curve $c$.

Let $c:[0,1]\to X$ be a $C^1$ curve.
Take a sufficiently fine subdivision $0=t_0<t_1<\dots<t_N=1$ and set $\Delta:=\max_{1\le i\le N}(t_i-t_{i-1})\ll1$.
By the uniform convergence of norm in Lemma \ref{lem:cur2} and the $C^1$ smoothness of $c$, we have
\[\left||c(t_{i-1}),c(t_i)|-F(c(t_{i-1}),c'(t_{i-1}))(t_i-t_{i-1})\right|<M\delta(\Delta)(t_i-t_{i-1}),\]
where $M$ is a positive number depending on the maximum speed of $c$.
By adding all the inequalities and taking $\Delta\to0$, we obtain $L(c)=\tilde L(c)$.
\end{proof}

Proposition \ref{prop:fin} follows from Lemmas \ref{lem:fin1}, \ref{lem:fin2}, \ref{lem:fin3}, and \ref{lem:fin4}.
Now Theorem \ref{thm:conc} follows from Corollaries \ref{cor:str} and \ref{cor:dc}, and Proposition \ref{prop:fin}.

\begin{rem}
Our result is still a little weaker than the original work of Otsu--Shioya \cite{OS94}, even taking into account the absence of the Riemannian structure.
In \cite[Lemma 3.2(2)]{OS94}, they proved that the cosine of the angle function in an Alexandrov G-space is locally $1/2$-H\"older continuous.
To be precise, we have not shown this statement, and the original proof seems to rely on the Riemannian property of Alexandrov spaces.
\end{rem}

\section{Semiconvex case}\label{sec:conv}

In this section, we prove Theorem \ref{thm:conv} for locally semiconvex, locally differentiable G-spaces.
The proof is almost the same as the semiconcave case, just by reversing the previous inequalities in an appropriate way, except for the proof of the uniform convergence of angle; see Proposition \ref{prop:unif'}.

We begin with the following basic remark.
As already mentioned in Remark \ref{rem:main}, local differentiability does not follow from local semiconvexity.
However, the following holds.

\begin{lem}\label{lem:cat}
Any locally CAT G-space $X$ is locally differentiable.
\end{lem}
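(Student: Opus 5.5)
Fix a point of $X$ and a normal ball $B$ small enough that the local CAT($\kappa$) condition holds on $10\bar B$ and shortest paths in $B$ are unique and uniquely extendable. Let $p\in B$, and let $y,x,z\in B$ lie on a shortest path in this order, with $p\neq x$. By Definition \ref{dfn:diff'} it suffices to prove
\[\angle pxy+\angle pxz=\pi.\]
The local CAT condition gives local semiconvexity (Example \ref{ex:concconv}). Hence the angles are well-defined by Lemma \ref{lem:ang'}, and Lemma \ref{lem:sum'} gives $\angle pxy+\angle pxz\ge\pi$. So only the reverse inequality needs proof.

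For the reverse inequality I will use that in a CAT($\kappa$) space the angle $\angle pxy$ of Definition \ref{dfn:ang} coincides with the Alexandrov angle $\angle_x(xp,xy)$ between the geodesics $xp$ and $xy$. This follows from the first variation formula in CAT($\kappa$) spaces \cite{BH99}: $\lim_{t\to0}(|p\gamma(t)|-|px|)/t=-\cos\angle_x(xp,xy)$, which is exactly \eqref{eq:fv}. The Alexandrov angle is symmetric and satisfies the triangle inequality, so
\[\angle_x(xy,xz)\le\angle_x(xy,xp)+\angle_x(xp,xz).\]
The geodesics $xy$ and $xz$ together form a shortest path through $x$, so the Alexandrov angle between them is $\pi$. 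Therefore $\pi\le\angle pxy+\angle pxz$, which is the same inequality as before and gives nothing new.

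The missing inequality $\angle pxy+\angle pxz\le\pi$ has to come from the G-space extension property, as in Lemma \ref{lem:diff}. Extend $px$ beyond $x$ to a point $q\in B$. The geodesics $xp$ and $xq$ together form a shortest path, so Lemma \ref{lem:gen1}, or equivalently the Alexandrov-angle triangle inequality, gives
\[\angle pxy+\angle qxy\ge\pi,\qquad \angle pxz+\angle qxz\ge\pi.\]
These also point the wrong way. What I need is the opposite fact in CAT spaces: if two geodesics form a geodesic through $x$, then for any third direction the angles to the two halves sum to exactly $\pi$. Since $\gamma_{xp}\cup\gamma_{xq}$ is a geodesic, the space of directions $\Sigma_x$ contains two antipodal directions $\xi_p,\xi_q$ with $d(\xi_p,\xi_q)=\pi$. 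Every direction $\eta$ satisfies $d(\eta,\xi_p)+d(\eta,\xi_q)\ge\pi$, but $\le$ can fail in a general CAT space (branching). In a G-space, however, geodesics do not branch. The space of directions at $x$ should then be a CAT(1) space in which every geodesic extends uniquely to a closed geodesic of length $2\pi$, so $\Sigma_x$ is a round sphere and $d(\eta,\xi_p)+d(\eta,\xi_q)=\pi$ for antipodes.

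A more elementary route is to apply the reverse inequality to the pair $(y,z)$ instead. Extend so that $q$ and $z$ are opposite to $p$ and $y$, and consider the four directions $\xi_p,\xi_q,\xi_y,\xi_z$ with $d(\xi_p,\xi_q)=d(\xi_y,\xi_z)=\pi$. I would exploit that $\Sigma_x$ is CAT(1) and uniquely geodesic below $\pi$. The concatenations $\xi_y\to\xi_p\to\xi_z$ and $\xi_y\to\xi_q\to\xi_z$ have total lengths $a+b$ and $(\pi-a')+(\pi-b')$, each at least $\pi$, and the four angles sum to $2\pi$ exactly when both equal $\pi$. If, say, $a+b>\pi$, then $\xi_y,\xi_p,\xi_z,\xi_q$ form a loop of length greater than $2\pi$ in which opposite points are at distance $\pi$; in a space of directions coming from a non-branching CAT space, this should contradict the fact that the unique extension of the geodesic $\xi_y\xi_p$ (or $\xi_p\xi_z$) closes up into a great circle through $\xi_z$. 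This final step, showing that the non-branching property of $X$ forces the space of directions to be a sphere-like CAT(1) space with exact antipodal sums, is the main obstacle. If it does not work directly, I would cite the structure theory of GCBA spaces \cite{LN19}: tangent cones of a locally CAT G-space are Euclidean (Berestovskii \cite{Be02}), and then the equality of the Alexandrov angles, hence of $\angle$, follows immediately.
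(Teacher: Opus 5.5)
Once you drop the two dead ends and the unfinished ``elementary route'' (which does not close and is not needed), your argument is exactly the paper's. You identify $\angle pxy$ with the Alexandrov angle via the CAT first variation formula and get $\ge\pi$ from Lemma \ref{lem:sum'}. You then obtain equality from the cited fact \cite{Be02,LN19} that $\Sigma_xX$ is a round unit sphere, so that the antipodal directions $\xi_y,\xi_z$ satisfy $d(\xi_p,\xi_y)+d(\xi_p,\xi_z)=\pi$; the paper phrases the same thing as a contradiction.
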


\begin{proof}
This follows from the fact that every space of directions of $X$ is a unit sphere (see \cite{Be02,LN19}).
Indeed, let $p,x,y,z\in X$ be points in a sufficiently small ball such that $y,x,z$ lie on a single shortest path.
Suppose that the local differentiability does not hold for these points.
Together with Lemma \ref{lem:sum'}, this implies
\[\angle pxy+\angle pxz>\pi.\]
Consider the space of directions $\Sigma_xX$ at $x$ and denote by $p',y',z'\in\Sigma_xX$ the elements defined by the shortest paths $xp, xy, xz$, respectively.
Since our angle is compatible with the distance on the space of directions in the CAT setting, the above inequality implies $|p',y'|+|p',z'|>\pi$.
Since $y'$ and $z'$ are antipodal points in $\Sigma_xX$, this contradicts the fact that $\Sigma_xX$ is a unit sphere.
\end{proof}

Now, we will verify step by step that all the results of Section \ref{sec:key} are valid in the current setting of locally semiconvex, locally differentiable G-spaces.
\begin{itemize}
\item Lemma \ref{lem:diff} is nothing but local differentiability, our current assumption.
\item Remark \ref{rem:diff1} is proved by the same argument as in the proof of Lemma \ref{lem:diff}, using Lemma \ref{lem:gen1} and the local differentiability assumption.
\item Lemmas \ref{lem:conti} and \ref{lem:well} are proved in exactly the same way by using the local differentiability and Lemmas \ref{lem:semiconti'} and \ref{lem:gen2}, respectively.
\end{itemize}
The details are left to the reader.

Moreover, we show that any locally semiconvex, locally differentiable G-space satisfies the uniform convergence of angle similar to Proposition \ref{prop:unif}.
However, the statement, as well as the proof, is somewhat different.
Compare also with \cite[Proposition 3.5]{Be02} and \cite[Lemma 7.6]{LN19} in the CAT setting, whose proofs rely on the Riemannian property of CAT spaces.

\begin{prop}\label{prop:unif'}
Let $X$ be a locally semiconvex, locally differentiable G-space and $B$ a normal ball.
Let $p\in B$ and $K\subset B\setminus\{p\}$ a compact set.
Then for any $x,y\in K$, we have
\[|\angle pxy-\tilde \angle pxy|<\delta(|xy|),\]
where $\delta(s)$ is a positive function independent of $x$ and $y$ (but depending on $p$ and $K$) such that $\delta(s)\to 0$ as $s\to 0$.
\end{prop}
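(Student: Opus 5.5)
The plan is a compactness-and-contradiction argument; this is why $\delta$ is allowed to depend on $p$ and $K$. A direct quantitative proof, as in Proposition \ref{prop:unif}, does not seem to be available here. Combining local differentiability with the opposite point $z$ (or $q$) would require bounding
\[|py|+|pz|-2|px|\le o(|xy|)\]
uniformly. That is a semiconcavity-type bound for $d(p,\cdot)$, which we do not have. Instead, I will exploit the monotonicity behind the almost comparison inequality in two ways: at scale $|xy|$, and at a fixed larger scale along the extended geodesic.

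Suppose the claim fails. Then there are $\varepsilon>0$ and $x_i,y_i\in K$ with $s_i:=|x_iy_i|\to0$ and
\[|\angle px_iy_i-\tilde\angle px_iy_i|\ge\varepsilon .\]
Passing to a subsequence, $x_i\to x\in K$, so $|px_i|\ge c>0$. By the extension property of a G-space, extend the shortest path $x_iy_i$ beyond $y_i$ to a shortest path $\gamma_i$ of a fixed length $r>0$ (inside $B$). Using compactness and local uniqueness, pass to a further subsequence with $\gamma_i\to\gamma$, a shortest path from $x$ to $w:=\gamma(r)\neq x$. By Lemma \ref{lem:well} (valid in the present setting), $\angle px_iy_i=\angle px_i\gamma_i(r)$. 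By the continuity of angle (Lemma \ref{lem:conti}, proved here from Lemma \ref{lem:semiconti'} and local differentiability), this tends to $\angle pxw$. It therefore suffices to show $\tilde\angle px_iy_i\to\angle pxw$.

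For the lower bound on the comparison angle, apply the almost comparison inequality \eqref{eq:comp'} (Lemma \ref{lem:ang'}). It is uniform in $p,x,y$, with error $\delta(s_i/|px_i|)\le\delta(s_i/c)$. This gives
\[\liminf_i\tilde\angle px_iy_i\ge\lim_i\angle px_iy_i=\angle pxw.\]

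For the reverse bound, use the monotonicity underlying Lemma \ref{lem:ang'}. In the semiconvex case, the function
\[f_i(t):=\frac{|p\gamma_i(t)|^2-|px_i|^2+\lambda t^2}{2|px_i|t}\]
is nondecreasing in $t$. Moreover, $f_i(t)$ differs from $-\cos\tilde\angle px_i\gamma_i(t)$ by $O(t/|px_i|)$, uniformly in $i$. Fix a small $\tau>0$. For $i$ large we have $s_i<\tau$, so
\[-\cos\tilde\angle px_iy_i\le f_i(s_i)+O(s_i)\le f_i(\tau)+O(s_i).\]
Letting $i\to\infty$ with $\tau$ fixed, $f_i(\tau)\to f(\tau)$, the corresponding quantity along $\gamma$ from $x$, by continuity of the distance. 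Then letting $\tau\to0$, the first variation formula \eqref{eq:fv} gives $f(\tau)\to-\cos\angle pxw$. Hence
\[\limsup_i\tilde\angle px_iy_i\le\angle pxw.\]
Together with the lower bound, $\tilde\angle px_iy_i-\angle px_iy_i\to0$, contradicting the choice of $\varepsilon$.

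The main obstacle is the upper bound for the comparison angle. Neither the local differentiability trick nor Lemma \ref{lem:gen1} gives it with a uniform rate, so the argument must pass through the fixed-scale monotonicity plus the convergence $\gamma_i\to\gamma$. This is where compactness of $K$ and $p\notin K$ enter: they keep $|px_i|$ bounded below and make the limit geodesic available. I also need to check that continuity of angle (Lemma \ref{lem:conti}) indeed holds here via Lemma \ref{lem:semiconti'} and local differentiability, exactly as in the concave case.
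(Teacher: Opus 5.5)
Your argument is correct, but it is organized differently from the paper's.

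\textbf{The paper's route.} It works at both vertices of the small triangle. It extends $xy$ beyond $y$ to $y'$ with $|xy'|=l$ and applies Lemma~\ref{lem:sum'} at $y$ to get $\angle pyx+\angle pyy'\ge\pi$. Continuity of angle plus an Arzel\`a--Ascoli argument then gives $\angle pyy'\approx\angle pxy$. Finally it squeezes, using \eqref{eq:comp'} at both $x$ and $y$ together with the Euclidean angle sum $\tilde\angle pxy+\tilde\angle pyx\le\pi$.

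\textbf{Your route.} You stay at the single base point $x_i$ and never use Lemma~\ref{lem:sum'} or the angle at $y$. The lower bound on $\tilde\angle px_iy_i$ comes from \eqref{eq:comp'}. The upper bound comes from pushing the monotone quotient $f_i$ out to a fixed scale $\tau$ and passing to the limit geodesic. This is essentially the mechanism of Lemma~\ref{lem:semiconti'}, applied to comparison angles at the shrinking scale $s_i$. One benefit is transparency: $\limsup_i\tilde\angle px_iy_i\le\angle pxw$ holds in any locally semiconvex G-space. Local differentiability enters only through $\liminf_i\angle px_iy_i\ge\angle pxw$, i.e.\ lower semicontinuity of the angle at the moving base point.

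\textbf{What the paper's route buys.} Its structure survives without differentiability when $x$ is held fixed. In that case only the upper semicontinuity bound $\angle pyy'\le\angle pxy+\delta'$ is needed, and Lemma~\ref{lem:sum'} turns it into a lower bound for $\angle pyx$. This yields control of the angles at the moving point $y$, including the almost-differentiability $\angle p_iyx+\angle p_iyy'\approx\pi$. That is exactly how Lemma~\ref{lem:bilip'} is proved in Section~\ref{sec:top}, and why the paper phrases the argument this way. Your argument would not transfer to that setting, since closing your squeeze requires lower semicontinuity of the angle itself, which is unavailable without differentiability.
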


Note that, unlike Proposition \ref{prop:unif}, the error function $\delta$ has no explicit form and is highly dependent on the local geometry of $X$.
This is why we cannot prove the $1/2$-H\"older continuity in Theorem \ref{thm:conv}, unlike in Theorem \ref{thm:conc}.

\begin{proof}
Choose $l>0$ such that the closed $l$-neighborhood of $K$ is contained in $B$.
Extend the shortest path $xy$ beyond $y$ and let $y'\in B$ be a new endpoint such that $|xy'|=l$.
See Figure \ref{fig:unif'}.

\begin{figure}[ht]
\centering
\begin{tikzpicture}
\coordinate[label=below:$p$](p)at(-0.5,-4);
\coordinate[label=above left:$x$](x)at(0,0);
\coordinate[label=above:$y$](y)at(1,0);
\coordinate[label=above right:$y'$](y')at(3,0);

\draw(p)to(x);
\draw(x)to(y)to(y');
\draw(p)to(y);
\draw[dashed](p)to(y');

\draw(0.5,0)circle[radius=1.5];
\node at(0.5,1){$K$};

\fill(p)circle(1.5pt);
\fill(x)circle(1.5pt);
\fill(y)circle(1.5pt);
\fill(y')circle(1.5pt);

\pic[draw, angle radius=3mm] {angle = p--x--y};
\pic[draw, angle radius=3mm] {angle = x--y--p};
\pic[draw, angle radius=3mm] {angle = p--y--y'};
\end{tikzpicture}
\caption{}\label{fig:unif'}
\end{figure}

By Lemma \ref{lem:sum'}, we have
\begin{equation}\label{eq:unif'1}
\angle pyx+\angle pyy'\ge \pi.
\end{equation}
Due to local differentiability, the angle varies continuously (cf.\ Lemma \ref{lem:conti}).
Therefore, by a compactness argument using the Arzel\`a--Ascoli theorem \cite[Theorem 2.5.14]{BBI01}, we get the following: for any positive number $\delta$, if the distance between $x$ and $y$ is small enough, then
\begin{equation}\label{eq:unif'2}
|\angle pxy-\angle pyy'|<\delta.
\end{equation}
Indeed, since $\angle pxy=\angle pxy'$, the above two angles converge to the same angle when $x$ and $y$ converge to the same point $z\in K$ and $xy'$ converges to some shortest path emanating from $z$.
Hence \eqref{eq:unif'2} is proved by contradiction.
Note that here we used the assumptions that $x,y\in K$ and $|xy'|=l$ to ensure that the limit triangle does not degenerate at the limit point $z$.
Furthermore, by the almost comparison inequality \eqref{eq:comp'}, we have
\begin{equation}\label{eq:unif'3}
\angle pxy+\angle pyx\le\tilde\angle pxy+\tilde\angle pyx+\delta\le\pi+\delta,
\end{equation}
provided that $|xy|/|pK|$ is small enough compared to $\delta$.
Combining the inequalities \eqref{eq:unif'1}, \eqref{eq:unif'2}, and \eqref{eq:unif'3}, we obtain the desired inequality.
\end{proof}

\begin{rem}\label{rem:unif'}
One might wonder if it is possible to remove the dependence of $\delta$ on $p$ and $K$ and improve the error function $\delta(|xy|)$ to the form $\delta(|xy|/|px|)$ as in Proposition \ref{prop:unif}, even if it is an implicit one.
The reason we cannot do this is as follows.
If $p$ is not bounded away from $x,y$ by a fixed compact set $K$, then when $x,y$ and $p$ converge to the same point $z$, it is unclear whether the extensions of the shortest paths $xp$ and $yp$ beyond $p$ converge to the same shortest path emanating from $z$, even if $|xy|/|px|$ tends to $0$.
\end{rem}

For comparison with the GCBA case in \cite[Lemma 7.6]{LN19} (not necessarily a G-space), we provide a slightly different proof of Proposition \ref{prop:unif'}.
This has a somewhat similar flavor to the proof of Proposition \ref{prop:unif} in the semiconcave case.

\begin{proof}[Alternative proof of Proposition \ref{prop:unif'}]
As before, choose $l>0$ such that the closed $l$-neighborhood of $K$ is contained in $B$.
Extend the shortest path $px$ beyond $x$ and let $q\in B$ be a new endpoint such that $|qx|=l$.
See Figure \ref{fig:unif''}.

\begin{figure}[ht]
\centering
\begin{tikzpicture}
\coordinate[label=below:$p$](p)at(-0.5,-3);
\coordinate[label=above left:$x$](x)at(0,0);
\coordinate[label=above right:$y$](y)at(1,0);
\coordinate[label=above:$q$](q)at(0.5,3);
\coordinate[label=above left:$x'$](x')at(-2,0);

\draw(p)to(x)to(q);
\draw(x')to(x)to(y);
\draw(p)to(y);
\draw(q)to(y);

\draw(0.5,0)circle[radius=1.5];
\node at(1.5,-0.5){$K$};

\fill(p)circle(1.5pt);
\fill(x)circle(1.5pt);
\fill(y)circle(1.5pt);
\fill(q)circle(1.5pt);
\fill(x')circle(1.5pt);

\pic[draw, angle radius=3mm] {angle = p--x--y};
\pic[draw, angle radius=3mm] {angle = x--y--p};
\pic[draw, angle radius=3mm] {angle = y--x--q};
\pic[draw, angle radius=3mm] {angle = q--y--x};
\end{tikzpicture}
\caption{}\label{fig:unif''}
\end{figure}

By Lemma \ref{lem:gen1}, we have
\begin{equation}\label{eq:unif''1}
\angle pxy+\angle qxy\ge\pi
\end{equation}
(in fact the equality holds, due to local differentiability and Remark \ref{rem:diff1}).
On the other hand, for any positive number $\delta$, if $x$ and $y$ are sufficiently close, then
\begin{equation}\label{eq:unif''2}
\angle pyx+\angle qyx\ge\pi-\delta.
\end{equation}
This immediately follows from \eqref{eq:unif'2} and Lemma \ref{lem:gen1}.
Indeed, take a point $x'\in B$ on the extension of the shortest path $yx$ beyond $x$ with $|x'y|=l$.
Then by the same argument as \eqref{eq:unif'2}, the two angles in \eqref{eq:unif''2} are $\delta/2$-close to $\angle pxx'$ and $\angle qxx'$, respectively, provided $|xy|$ is small enough.
The sum of the latter two angles is not less than $\pi$ by Lemma \ref{lem:gen1}, which gives \eqref{eq:unif''2}.
Furthermore, by the almost comparison inequality \eqref{eq:comp'} as before, we have
\begin{gather}
\angle pxy+\angle pyx\le\tilde\angle pxy+\tilde\angle pyx+\delta\le\pi+\delta,\label{eq:unif''3}\\
\angle qxy+\angle qyx\le\tilde\angle qxy+\tilde\angle qyx+\delta\le\pi+\delta,\label{eq:unif''4}
\end{gather}
provided $|xy|$ is small enough compared to $|pK|$ and $l$.
Combining \eqref{eq:unif''1}, \eqref{eq:unif''2}, \eqref{eq:unif''3}, and \eqref{eq:unif''4}, we obtain the desired inequality.
Note that the difference from the previous proof is that we used Lemma \ref{lem:gen1} here instead of Lemma \ref{lem:sum'}.
\end{proof}

Now the rest of the proof of Theorem \ref{thm:conv} proceeds almost the same way as that of Theorem \ref{thm:conc}, except for the part on the $1/2$-H\"older continuity.
We will briefly explain the necessary modifications.

\begin{proof}[Proof of Theorem \ref{thm:conv}]
In what follows, we will consider the statements in Sections \ref{sec:str}, \ref{sec:cur}, \ref{sec:dc}, and \ref{sec:fin} with local semiconcavity replaced by local semiconvexity plus local differentiability, as already done for Section \ref{sec:key}.

Regarding Section \ref{sec:str}, the definition and properties of a strainer are exactly the same as before, since it only involves the first-order derivative of the distance function, i.e., angles.
Note that the strainer argument takes place in a small neighborhood of a fixed strained point (compared to the length of a strainer), so the additional dependency of the error function $\delta$ of Proposition \ref{prop:unif'} does not matter.

Regarding Section \ref{sec:cur}, the properties of a strainer curve remain true with the same proofs, except for the explicit representation of the error function $\delta$.
Every statement holds for an implicit error function arising from Proposition \ref{prop:unif'}.

Regarding Section \ref{sec:dc}, due to the lack of the explicit error function, we only get a $C^1$ structure in Proposition \ref{prop:c1}, which immediately follows from Lemma \ref{lem:cur4}.
On the other hand, Proposition \ref{prop:dc} on the DC structure remains true, but requires slight modifications, as it involves the second-order derivative of the distance function.
In fact, it suffices to change the sign in the following manner: a general semiconvex function $f$ is defined by the condition that $-f$ is semiconcave; a DC function here should be defined as the difference of two semiconvex functions, which in turn is equivalent to the previous definition; then all the previous arguments carry over to the current setting simply by replacing a function $f$ with $-f$.
For example, the definition of an $(\epsilon,\delta)$-special function in \eqref{eq:sp} will be replaced by
\[(f_i\circ\gamma)^+(0)\ge-\delta\text{ for all }1\le i\le n\implies (g\circ\gamma)^+(0)\le-\epsilon.\]
Then Lemmas \ref{lem:dc1} and \ref{lem:dc2} are valid for locally semiconvex, locally differentiable G-spaces, by the same proofs with the sign change explained above.
The details are left to the reader.

Regarding Section \ref{sec:fin}, we can repeat exactly the same construction of a Finsler metric, except for Lemma \ref{lem:fin2} on the $1/2$-H\"older continuity.
This completes the proof of Theorem \ref{thm:conv}.
\end{proof}

Finally, we give further remarks on the work of Pogorelov \cite[Chapter 3]{Po98}.
Compare with Remark \ref{rem:pog1}.

\begin{rem}\label{rem:pog2}
Similar to Remark \ref{rem:pog1}, the above argument seems to show that any locally semiconvex, locally differentiable G-space satisfies Axiom A in \cite[Chapter 3]{Po98}.
This is nearly correct in the sense that we have proved the existence of a well-defined angle and its continuous dependence on shortest paths (cf.\ Lemmas \ref{lem:conti} and \ref{lem:well}), as well as the sum formulae for adjacent/opposite angles (cf.\ Definition \ref{dfn:diff} and Remark \ref{rem:diff1}).
However, strictly speaking, due to the weak error function in Proposition \ref{prop:unif'} (cf.\ Remark \ref{rem:unif'}), it is still unclear whether the ``slope'' in the sense of \cite{Po98} is well-defined (because the slope allows the degeneration of a comparison triangle, unlike our definition of angle).
However, as we saw above, this does not affect the construction of a Finsler metric.
\end{rem}

\begin{rem}\label{rem:pog3}
Conversely, an argument similar to Proposition \ref{prop:unif'} shows that Axiom A implies the uniform convergence of angle, which seems to be implicitly used in \cite{Po98}.
Here we rely on the mean value theorem instead of the local semiconvexity used above.

Let $X$ be a G-space satisfying Axiom A, $B$ a normal ball, $p\in B$, and $K\subset B\setminus\{p\}$ compact.
For any $x,y\in K$, if $|xy|/|pK|$ is small enough, then $\cos\tilde\angle pxy$ is close to the differential quotient $(|px|-|py|)/|xy|$.
By the mean value theorem, this differential quotient is equal to some $\cos\angle pzy$, where $z$ is a point on the shortest path $xy$.
Furthermore, if $x$ and $y$ are sufficiently close, then the same argument as \eqref{eq:unif'2} using Axiom A shows that $\angle pzy$ is close to $\angle pxy$.
Consequently, $\tilde\angle pxy$ is uniformly close to $\angle pxy$, independently of $x$ and $y$.
\end{rem}

\section{Non-differentiable case}\label{sec:top}

In this section, we prove Theorem \ref{thm:top} for (not necessarily locally differentiable) locally semiconvex G-spaces.

Recall that every G-space is topologically homogeneous; see Section \ref{sec:g} (the proof can be found in \cite[Theorem 2.5]{Th96}, \cite[Corollary 3.12]{BHR11}).
Therefore, to prove that a locally semiconvex G-space is a topological manifold, it is sufficient to find at least one manifold point: it is not necessary to construct strainer coordinates everywhere as before.

To find a manifold point, we generalize the strainer argument developed in Section \ref{sec:str}.
Note that, compared to the previous settings with local differentiability discussed in Sections \ref{sec:conc} and \ref{sec:conv}, we do not have the continuity of angle (Lemma \ref{lem:conti}), well-definedness of angle (Lemma \ref{lem:well}), or the uniform convergence of angle (Propositions \ref{prop:unif} and \ref{prop:unif'}).
We can only use the general properties of angle established in Section \ref{sec:ang}.

Nevertheless, it is still possible to carry out a weaker version of a strainer argument.
In fact, such a strainer argument (which does not rely on differentiability) has already been developed in the recent study of Busemann convex spaces by the authors \cite{FG25}.
The following argument is based on the same idea as in \cite{FG25}.

We first modify the definition of a strainer to deal with the non-differentiability of a locally semiconvex G-space.
Compare with Definition \ref{dfn:str}.

\begin{dfn}\label{dfn:str'}
Let $X$ be a locally semiconvex G-space, $B$ a normal ball, and $x\in B$.
A collection of points $p_1,\dots,p_k\in B\setminus\{x\}$ is called a \emph{$(k,\delta)$-strainer} at $x$ if it satisfies
\begin{equation}\label{eq:str'}
|\angle p_ixp_j-\pi/2|<\delta,\quad|\angle p_ixq_j-\pi/2|<\delta
\end{equation}
for any $1\le i< j\le k$, where $q_j\in B$ is a point on the extension of the shortest path $p_jx$ beyond $x$.
The point $x$ is called a \emph{$(k,\delta)$-strained point} and the distance map $f:=(d(p_1,\cdot),\dots,d(p_k,\cdot))$ is called a \emph{$(k,\delta)$-strainer} map at $x$.
\end{dfn}

Here we assume that $\delta$ is sufficiently small.
As before, an upper bound for $\delta$ will be defined later in Proposition \ref{prop:open'} in terms of $k$.
Note that the angle $\angle p_ixq_j$ does not depend on a choice of $q_j$, though it depends on $x$ (this treatment is slightly different from \cite{FG25}).

\begin{rem}
If, in addition, $X$ is locally differentiable, then the second inequality in \eqref{eq:str'} for $\angle p_ixq_j$ follows from the first one for $\angle p_ixp_j$, since their sum is equal to $\pi$ (see Definition \ref{dfn:diff'}).
Therefore Definition \ref{dfn:str'} generalizes Definition \ref{dfn:str} in the locally semiconvex, locally differentiable case (which was omitted in Section \ref{sec:conv}.)
In particular, the condition \eqref{eq:str'} can be viewed as the condition \eqref{eq:str} plus ``almost differentiability'' of the distance function from $p_i$ along the shortest path $p_jq_j$ in the sense that $|\angle p_ixp_j+\angle p_ixq_j-\pi|<2\delta$.
\end{rem}

Although we cannot rely on the continuity of angle, we still have the following openness of the strainer condition.
Compare with Remark \ref{rem:stropen}.

\begin{lem}\label{lem:stropen'}
If $p_1,\dots,p_k$ is a $(k,\delta)$-strainer at $x$ as in Definition \ref{dfn:str'}, then it is a $(k,\delta)$-strainer at any point in a neighborhood of $x$.
\end{lem}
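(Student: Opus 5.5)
The plan is to get upper bounds on the angles at nearby points from upper semicontinuity (Lemma \ref{lem:semiconti'}), and then to get the matching lower bounds from the adjacent-angle inequality of Lemma \ref{lem:sum'}. This replaces the continuity of angle used in Remark \ref{rem:stropen}.

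\textbf{Setup.} Fix $1\le i<j\le k$. Since there are finitely many pairs, it suffices to find a neighborhood for each pair and intersect them.

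Fix once and for all a point $q_j$ on the extension of $p_jx$ beyond $x$, with $|xq_j|=\ell>0$ small enough that everything stays well inside $B$. For $x'$ near $x$, let $q_j'$ be the point on the extension of the shortest path $p_jx'$ beyond $x'$ with $|x'q_j'|=\ell$. This point exists and is unique by the G-space extension property in the normal ball.

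\textbf{Step 1: $q_j'\to q_j$.} I first check that $q_j'\to q_j$ as $x'\to x$. Take any subsequence. By local compactness, the extended paths $p_jx'q_j'$ subconverge to a shortest path through $p_j$, $x$ and some limit point $q$ with $|xq|=\ell$. By uniqueness of extensions, $q=q_j$. Since every subsequence has this limit, the full family converges. The same argument applies with $x'\to x$ along arbitrary sequences, which is the form needed in Step 2.

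I will also use that $p_i,p_j,q_j'$ stay away from $x'$ for $x'$ close to $x$, so that all angles below are defined. They are well defined by Lemma \ref{lem:ang'}.

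\textbf{Step 2: upper bounds.} By Lemma \ref{lem:semiconti'}, applied to sequences $x'\to x$ (with $p_i$ and $p_j$ fixed, and with $q_j'\to q_j$ by Step 1), we get
\[\limsup_{x'\to x}\angle p_ix'p_j\le\angle p_ixp_j<\pi/2+\delta,\qquad \limsup_{x'\to x}\angle p_ix'q_j'\le\angle p_ixq_j<\pi/2+\delta.\]
Because the right-hand inequalities are strict, there is a neighborhood of $x$ on which
\[\angle p_ix'p_j<\pi/2+\delta\quad\text{and}\quad\angle p_ix'q_j'<\pi/2+\delta.\]
Here I use that the angle $\angle p_ix'q_j'$ does not depend on which point of the extension is chosen, as noted after Definition \ref{dfn:str'}.

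\textbf{Step 3: lower bounds.} The points $p_j,x',q_j'$ lie on one shortest path in this order. Lemma \ref{lem:sum'} therefore gives
\[\angle p_ix'p_j+\angle p_ix'q_j'\ge\pi.\]
Combining this with the two strict upper bounds from Step 2 yields
\[\angle p_ix'p_j>\pi/2-\delta\quad\text{and}\quad\angle p_ix'q_j'>\pi/2-\delta.\]
Hence both conditions in \eqref{eq:str'} hold at $x'$, and $p_1,\dots,p_k$ is a $(k,\delta)$-strainer at every point of the neighborhood.

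\textbf{Expected obstacle.} The main delicate point is Step 1 together with its use in Lemma \ref{lem:semiconti'}. The second condition involves the extension point $q_j'$, which moves with $x'$, so upper semicontinuity can only be applied after showing that the extensions converge. This is where the G-space axioms (uniqueness of extensions, local compactness) are essential. The rest is the elementary combination of one-sided bounds in Steps 2 and 3.
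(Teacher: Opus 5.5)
Your proposal is correct and is essentially the paper's proof: strict upper bounds at nearby points from upper semicontinuity (Lemma \ref{lem:semiconti'}), then the matching lower bounds from the adjacent-angle inequality of Lemma \ref{lem:sum'}. Your Step 1, the convergence of the extension points $q_j'\to q_j$ via the unique-extension axiom, makes explicit a detail the paper leaves implicit when it applies upper semicontinuity to the moving point $q_j'$.
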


\begin{proof}
Assume that $y\in B$ is sufficiently close to $x$.
By the upper semicontinuity of angle, Lemma \ref{lem:semiconti'}, we have
\[\angle p_iyp_j<\pi/2+\delta,\quad \angle p_iyq_j'<\pi/2+\delta,\]
where $q_j'\in B$ is a point on the extension of the shortest path $p_jy$ beyond $y$.
On the other hand, by Lemma \ref{lem:sum'}, the sum of these two angles is not less than $\pi$.
Therefore, we also have $\angle p_iyp_j>\pi/2-\delta$ and $\angle p_iyq_j'>\pi/2-\delta$, as required.
\end{proof}

\begin{rem}
The above proof shows that in the current setting of local semiconvexity, only upper bounds for angles are necessary in the almost orthogonality condition \eqref{eq:str'}.
In fact it is possible to define a strainer in that way (similar to \cite{LN19}), but we did not take this approach for comparison with the previous sections.
\end{rem}

The following proposition is proved in exactly the same way as Proposition \ref{prop:open}, using the almost orthogonality condition \eqref{eq:str'}.
See also Remark \ref{rem:open} and compare with \cite[Proposition 5.17]{FG25}.

\begin{prop}\label{prop:open'}
Let $X$ be a locally semiconvex G-space and $B$ a normal ball.
Let $f$ be a $(k,\delta)$-strainer map at $x\in B$.
If $\delta\le\delta_k$, then $f$ is an $\epsilon_k$-open map in a neighborhood of $x$, where $\delta_k\ll\epsilon_k$ are positive constants depending only on $k$.
\end{prop}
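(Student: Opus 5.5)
We follow the proof of Proposition \ref{prop:open} verbatim, replacing the local differentiability used there by the second half of the almost orthogonality condition \eqref{eq:str'}. By Lemma \ref{lem:stropen'}, $p_1,\dots,p_k$ is a $(k,\delta)$-strainer on a neighborhood $U$ of $x$. On $\mathbb R^k$ we use the modified norm $||v||:=\sum_{i=1}^k\epsilon^i|v_i|$ with $\epsilon=\epsilon(k)$. By Lemma \ref{lem:open} (applicable since $X$ is locally compact and $f$ is continuous), it suffices to show the following. For every $y\in U$ and every $v\neq f(y)$ sufficiently close to $f(y)$, there is $z\in U\setminus\{y\}$ with
\[||f(z)-v||-||f(y)-v||\le-\epsilon'|yz|.\]
Passing back to the Euclidean norm then only changes constants depending on $k$.

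To construct $z$, fix the smallest index $i$ with $f_i(y)\neq v_i$, or any such index. There are two cases.

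\emph{Case $f_i(y)>v_i$.} Take $z$ on the shortest path $yp_i$, very close to $y$.

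\emph{Case $f_i(y)<v_i$.} Take $z$ on the extension of $p_iy$ beyond $y$, that is, on the shortest path $yq_i'$, where $q_i'$ is a point beyond $y$.

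In both cases $z$ is chosen so that $f_i(z)$ stays strictly between $f_i(y)$ and $v_i$. Then $|f_i(z)-v_i|-|f_i(y)-v_i|=-|yz|$ exactly, since $d(p_i,\cdot)$ is affine with slope $\mp1$ along this path. For $j>i$ we only use that $f_j$ is $1$-Lipschitz, which gives a change of at most $|yz|$.

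For $j<i$ we need the change of $|f_j-v_j|$ to be at most $2\delta|yz|$. This is where \eqref{eq:str'} at $y$ enters, with the pair $j<i$:
\[|\angle p_jyp_i-\pi/2|<\delta,\qquad |\angle p_jyq_i'-\pi/2|<\delta.\]
By the first variation formula \eqref{eq:fv}, the one-sided derivative of $d(p_j,\cdot)$ along $yp_i$, respectively along $yq_i'$, equals $-\cos$ of the corresponding angle. Its absolute value is therefore less than $\sin\delta\le\delta$. Choosing $z$ close enough to $y$, depending on $y$, as allowed by Remark \ref{rem:open}, we get $|f_j(z)-f_j(y)|\le2\delta|yz|$.

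This step is the only real point of the proof. Without differentiability, the angle along the backward extension is \emph{not} determined by the forward one. That is precisely why Definition \ref{dfn:str'} imposes the condition on $\angle p_jxq_i$ separately. Since the strainer condition at $y$ is defined using extensions at $y$, the requirement is exactly what is needed. No continuity or uniform convergence of angles is used, only the pointwise limit \eqref{eq:fv}, whose existence is guaranteed by Lemma \ref{lem:ang'}.

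Summing the three cases gives
\[||f(z)-v||-||f(y)-v||\le\Bigl(\sum_{j<i}2\delta\epsilon^j-\epsilon^i+\sum_{j>i}\epsilon^j\Bigr)|yz|\le-(\epsilon^i/2)|yz|.\]
The last inequality holds once $\epsilon$ is small in terms of $k$, so that $\sum_{j>i}\epsilon^j\le\epsilon^i/4$, and $\delta\le\delta_k\ll\epsilon^k$. Hence $\epsilon'=\epsilon^k/2$ works. Lemma \ref{lem:open} then gives the $\epsilon_k$-openness near $x$.

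The only mild care needed is that the points $p_j$, $q_i'$, $y$, $z$ all lie in the normal ball, so that angles are defined. This holds because $U$ is small compared to the strainer length.
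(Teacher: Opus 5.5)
Your proposal is correct and follows the paper's route. The paper proves Proposition \ref{prop:open'} by repeating the proof of Proposition \ref{prop:open} verbatim: the modified norm, Lemma \ref{lem:open}, and moving along $yp_i$ or its extension beyond $y$. The second half of \eqref{eq:str'} replaces local differentiability to control the $j<i$ terms pointwise via \eqref{eq:fv}, and, as in Remark \ref{rem:open}, no continuity or uniform convergence of angles is needed.
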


The next lemma is the key to dealing with the non-differentiability of a locally semiconvex G-space.
Compare with Lemma \ref{lem:bilip} and \cite[Proposition 5.7]{FG25}.

\begin{lem}\label{lem:bilip'}
Let $X$ be a locally semiconvex G-space and $B$ a normal ball.
Let $p_1,\dots,p_k\in B$ be a $(k,\delta)$-strainer at $x\in B$.
Suppose that there exists $y\in B$ sufficiently close to $x$ such that
\[||p_ix|-|p_iy||<\delta'|xy|\]
for all $1\le i\le k$, where $\delta'\ll\delta$ is a positive constant depending only on $\delta$.
Then $p_1,\dots,p_k,x$ is a $(k+1,\delta)$-strainer at $y$.
\end{lem}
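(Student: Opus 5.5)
The plan is to check the two new orthogonality conditions at $y$ using only the semiconvex tools of Section \ref{sec:ang}: the almost comparison inequality \eqref{eq:comp'}, the adjacent-angle inequality (Lemma \ref{lem:sum'}) and the openness Lemma \ref{lem:stropen'}. None of these needs continuity or well-definedness of angles.

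\emph{Old conditions.} If $y$ is close enough to $x$, Lemma \ref{lem:stropen'} shows that $p_1,\dots,p_k$ is a $(k,\delta)$-strainer at $y$. So the conditions \eqref{eq:str'} with $1\le i<j\le k$ hold at $y$. Since $p_{k+1}:=x$ carries the largest index, the only remaining requirements are
\[|\angle p_iyx-\pi/2|<\delta,\qquad|\angle p_iyq-\pi/2|<\delta\qquad(1\le i\le k),\]
where $q\in B$ lies on the extension of the shortest path $xy$ beyond $y$. By Lemma \ref{lem:sum'}, $\angle p_iyx+\angle p_iyq\ge\pi$, because $x,y,q$ lie on one shortest path in this order. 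Hence it suffices to prove the two upper bounds $\angle p_iyx<\pi/2+\delta$ and $\angle p_iyq<\pi/2+\delta$. The lower bounds then follow automatically, exactly as in the proof of Lemma \ref{lem:stropen'}.

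\emph{First upper bound.} Write $s=|xy|$. By the Euclidean law of cosines (as in the proof of Proposition \ref{prop:unif}),
\[\cos\tilde\angle p_iyx=\frac{|p_iy|-|p_ix|}{s}+O\bigl(s/|p_iy|\bigr),\]
so the hypothesis gives $|\cos\tilde\angle p_iyx|<\delta'+O(s/|p_iy|)$. The strainer length bounds $|p_iy|$ below, so for $\delta'\ll\delta$ and $y$ close to $x$ we get $\tilde\angle p_iyx<\pi/2+\delta/2$. The almost comparison inequality \eqref{eq:comp'}, applied along the shortest path $yx$, gives $\angle p_iyx\le\tilde\angle p_iyx+\delta(s/|p_iy|)$, which yields the first bound.

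\emph{Second upper bound (main step).} The difficulty is that $\tilde\angle p_iyq$ is not directly controlled by the hypothesis. I will transfer the information across $y$ using the semiconvexity of $d(p_i,\cdot)$ along the extended geodesic $x\to y\to q$ (Remark \ref{rem:sq}). Fix a small $l>0$, depending only on $\delta$, the semiconvexity constant and the strainer length, and choose $q$ with $|yq|=l$. Semiconvexity of $t\mapsto|p_i\gamma(t)|$ on $[0,s+l]$ means the secant slopes are almost monotone:
\[\frac{|p_iq|-|p_iy|}{l}\ \ge\ \frac{|p_iy|-|p_ix|}{s}-C(s+l)\ \ge\ -\delta'-C(s+l).\]
By the law of cosines, $\cos\tilde\angle p_iyq\ge-\delta'-C'l$ once $s\le l$. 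Therefore $\tilde\angle p_iyq<\pi/2+\delta/2$ after choosing $l$ and then $\delta'$ small.

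Applying \eqref{eq:comp'} along $yq$ gives $\angle p_iyq\le\tilde\angle p_iyq+\delta(l/|p_iy|)<\pi/2+\delta$, again using that $l$ is small compared with the strainer length. The angle $\angle p_iyq$ does not depend on the choice of $q$ along the extension, so this bound holds for the angle appearing in \eqref{eq:str'}. Together with the first bound and Lemma \ref{lem:sum'}, this completes the argument.

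I expect the only delicate bookkeeping to be the order in which the parameters are fixed: first $l$ (from $\delta$), then $\delta'$, then the closeness of $y$ to $x$ (which must give $s\le l$ and keep the Lemma \ref{lem:stropen'} neighborhood valid).
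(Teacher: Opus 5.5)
\textbf{The proposal has a genuine gap: the main step proves the wrong inequality.}

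Your reduction is sound and coincides with the paper's. Lemma \ref{lem:stropen'} handles the old conditions, and only the two angles at $y$ involving $p_{k+1}=x$ remain. Lemma \ref{lem:sum'} reduces these to the upper bounds $\angle p_iyx<\pi/2+\delta$ and $\angle p_iyq<\pi/2+\delta$. Your proof of the first bound via \eqref{eq:comp'} along $yx$ is fine.

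The main step, however, has a sign error, and it cannot be repaired within your approach. The slope inequality $(|p_iq|-|p_iy|)/l\ge-\delta'-C(s+l)$ is correct. But
\[\cos\tilde\angle p_iyq=\frac{(|p_iy|-|p_iq|)(|p_iy|+|p_iq|)}{2|p_iy|\,l}+\frac{l}{2|p_iy|},\]
so a lower bound on $|p_iq|-|p_iy|$ gives $\cos\tilde\angle p_iyq\le\delta'+C'l$. That is the \emph{lower} bound $\tilde\angle p_iyq\gtrsim\pi/2$, which is the half you already get from Lemma \ref{lem:sum'} and your first bound.

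Semiconvexity of $t\mapsto|p_i\gamma(t)|$ only says that slopes increase. The function may therefore be almost flat on $[0,s]$ and then have an upward kink at $y$, and nothing in your argument excludes that.

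Here is a sanity check. For $k=1$ the strainer conditions are void, and your argument only needs $|xy|$ small independently of $x$. So it would apply to every pair of nearby points. Now take a strictly convex normed plane whose unit sphere has a corner $c$. Let $p_1=0$, $y=c$, and let $x$ lie at distance $s$ from $c$ on the supporting line at $c$ that is tangent to one of the two arcs meeting there. Then $||p_1x|-|p_1y||=O(s^2)$. But the norm grows at a definite positive rate along the continuation of that line beyond $c$, so $\angle p_1yq$ stays a fixed amount above $\pi/2$. This is the kind of example the paper mentions right after the statement, explaining why $x$ must be fixed.

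\textbf{The missing idea} is an input at the fixed base point $x$ that bounds the forward angle at $y$ from above. The paper takes it from the upper semicontinuity of angles, Lemma \ref{lem:semiconti'}.
\begin{itemize}
\item For $y$ close to $x$, one has $\angle p_iyy'\le\angle p_ixy+\delta'$, where $y'$ is your $q$.
\item Then \eqref{eq:comp'} at the vertex $x$ gives $\angle p_ixy\le\tilde\angle p_ixy+\delta'$.
\item The hypothesis makes $\tilde\angle p_ixy$ close to $\pi/2$.
\end{itemize}
Combined with your first bound and Lemma \ref{lem:sum'}, this gives both conditions of \eqref{eq:str'}. The paper packages the combination as $|\angle p_iyx-\tilde\angle p_iyx|<2\delta'$ and $|\angle p_iyx+\angle p_iyy'-\pi|<2\delta'$.

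If you follow this route, write the semicontinuity step out carefully. Along a sequence $y_j\to x$ whose directions $xy_j$ converge, Lemma \ref{lem:semiconti'} only bounds $\limsup_j\angle p_iy_jy_j'$ by the angle at $x$ in the limit direction. Comparing that with $\angle p_ixy_j$ needs semicontinuity in the opposite direction at the fixed vertex $x$, which Lemma \ref{lem:semiconti'} does not by itself provide.
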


Note that, unlike Lemma \ref{lem:bilip} where $y$ and $z$ are arbitrary points, here we have to fix $z=x$.
Otherwise, the statement does not hold: consider the case where $x$ is a non-differentiability point of a strictly convex norm on $\mathbb R^n$.
It should be emphasized that the proof of Lemma \ref{lem:bilip} in the locally semiconvex, locally differentiable case (which was omitted in Section \ref{sec:conv}) relied on the uniform convergence of angle, Proposition \ref{prop:unif'}.

\begin{proof}[Proof of Lemma \ref{lem:bilip'}]
By the openness of the strainer condition, Lemma \ref{lem:stropen'}, we may assume that $p_1,\dots,p_k$ is a $(k,\delta)$-strainer at $y$.
Hence it remains to show that
\begin{equation}\label{eq:bilip'1}
|\angle p_iyx-\pi/2|<\delta,\quad|\angle p_iyy'-\pi/2|<\delta,
\end{equation}
where $y'$ is a point on the extension of the shortest path $xy$ beyond $y$.

In what follows, we repeat the same argument as in the proof of Proposition \ref{prop:unif'}, replacing the continuity of angle (cf.\ Lemma \ref{lem:conti}) with the upper semicontinuity of angle, Lemma \ref{lem:semiconti'}.
See again Figure \ref{fig:unif'}, regarding $p$ as $p_i$ and ignoring $K$.

By Lemma \ref{lem:sum'} on adjacent angles, we have
\[\angle p_iyx+\angle p_iyy'\ge \pi.\]
Since $y$ is sufficiently close to $x$, the upper semicontinuity of angle in Lemma \ref{lem:semiconti'} implies
\[\angle p_iyy'\le \angle p_ixy+\delta',\]
where $\delta'$ is as in the statement of Lemma \ref{lem:bilip'} (compare the above inequality with \eqref{eq:unif'2} where $x$ is not fixed).
By the almost comparison inequality \eqref{eq:comp'}, we have
\[\angle p_ixy+\angle p_iyx\le\tilde\angle p_ixy+\tilde\angle p_iyx+\delta'\le\pi+\delta',\]
provided that $|xy|/|p_ix|$ is small enough compared to $\delta'$.
Combining the above three inequalities, we obtain
\begin{equation}\label{eq:bilip'2}
|\angle p_iyx-\tilde\angle p_iyx|<2\delta',\quad|\angle p_i yx+\angle p_iyy'-\pi|<2\delta'.
\end{equation}
Now, using the assumption that $||p_ix|-|p_iy||<\delta'|xy|$, we have $|\tilde\angle p_iyx-\pi/2|<2\delta'$, provided $\delta'$ is small enough.
Together with \eqref{eq:bilip'2}, this implies the desired inequality \eqref{eq:bilip'1}, provided $\delta'\ll\delta$.
\end{proof}

For the proof of Theorem \ref{thm:top}, we need the following lemma.
Compare with \cite[Proposition 5.3, Corollary 5.4]{LN19} and \cite{Be77, BHR11} (see also Remarks \ref{rem:dim1} and \ref{rem:dim2} below).

\begin{lem}\label{lem:dim}
Any normal ball in a locally semiconvex G-space admits a bi-Lipschitz embedding into Euclidean space (which is an almost isometry with respect to the sup-norm).
In particular, any normal ball has finite Hausdorff dimension.
\end{lem}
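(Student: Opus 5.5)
Let $B=B(o,r)$ be a normal ball, so $10\bar B\subset U$ with $U$ as in the definitions. The plan is the Kuratowski-type embedding by finitely many distance functions, as in \cite[Proposition 5.3]{LN19}. Fix a small $\epsilon>0$ and choose a finite $\epsilon r$-net $\{a_1,\dots,a_N\}$ of the annulus $\bar B(o,5r)\setminus B(o,3r)$. Set
\[\Phi:=(d(a_1,\cdot),\dots,d(a_N,\cdot)):B\to\mathbb (\mathbb R^N,\|\cdot\|_\infty).\]
$\Phi$ is $1$-Lipschitz for the sup-norm. The goal is the lower bound $\|\Phi(x)-\Phi(y)\|_\infty\ge(1-C\epsilon)|xy|$ for all $x,y\in B$. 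This makes $\Phi$ an almost isometry onto its image in sup-norm, hence bi-Lipschitz for the Euclidean norm with constant depending on $N$. Finite Hausdorff dimension then follows, since bi-Lipschitz maps preserve Hausdorff dimension and the image lies in $\mathbb R^N$.

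For the lower bound, take $x\ne y$ in $B$. Since $X$ is a G-space and $3\bar B\subset U$, extend the shortest path $yx$ beyond $x$ until it reaches distance $4r$ from $o$, ending at a point $p$. Then $p$ lies in the annulus, and $x$ lies on the shortest path $yp$, so $|py|-|px|=|xy|$. The main issue is that $yp$ is shortest only if the extended segment stays minimizing. I will use that in the neighborhood $U$ of Definition \ref{dfn:g} shortest paths extend uniquely as shortest paths until reaching $\partial U$; this is the standard meaning and is used the same way in Lemma \ref{lem:diff}. Since $|yp|\le 5r$ keeps everything inside $10\bar B\subset U$, this should be fine.

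Now choose a net point $a_j$ with $|a_jp|\le\epsilon r$. The naive estimate gives $|a_jy|-|a_jx|\ge|py|-|px|-2\epsilon r=|xy|-2\epsilon r$. This is useless when $|xy|\ll\epsilon r$, and that is the main obstacle: handling pairs at small scale relative to the net.

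To remove the additive error, I will use local semiconvexity as a first-order comparison. Consider the function $t\mapsto|a_j\gamma(t)|$ along the shortest path $\gamma$ from $y$ through $x$ to $p$. By semiconvexity of $d(a_j,\cdot)$ away from $a_j$ (Remark \ref{rem:sq}, constant $C/r$ since $|a_j\gamma|\ge r/2$ on the relevant part), its slopes are almost nondecreasing. So the average slope on $[0,|xy|]$ is at least its slope near the beginning, minus $C|xy|/r$. Write $L=|yp|$. Comparing slopes on $[0,|xy|]$ versus on the whole segment $[0,L-\epsilon' ]$ near $p$ does not directly help, because semiconvexity bounds early slopes from above by later ones, which is the wrong direction.

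I will therefore reverse the roles: extend instead the path $xy$ beyond $y$ to a far point $p$ at distance about $4r$ from $o$, and restrict attention to the subsegment from $x$ to $y$. The function $h(t)=|a_j\gamma(t)|$, with $\gamma$ running from $x$ to $p$, satisfies $h(0)-h(L)\ge L-\epsilon r$, where $L=|xp|\ge r$. It is $1$-Lipschitz and $(C/r)$-semiconvex. Semiconvexity says the slope at the start is at most the average slope plus $CL/r$. Semiconvexity plus Lipschitz gives, via the standard one-dimensional argument, that on $[0,s]$ the average slope is $\le$ the average slope on $[0,L]$ plus $C(L-s)/r$. That average is $\le-1+\epsilon r/L+C$, which is not small. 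To make the errors small I will run the whole construction at scale $r$ replaced by a much smaller radius $\rho$ where the semiconvexity error $C\rho/r\le\epsilon$. I will use nets at each dyadic scale inside $B$. Finitely many suffice by compactness only if the scales are bounded below, so pairs with $|xy|$ below all scales remain. I expect this to be the genuinely hard step. My first attempt is to fix a single scale $\rho$ with $\lambda\rho\le\epsilon$ and a single $\epsilon\rho$-net $A$ of $B$, and use $p$ at distance $\rho$ from $x$ along the extension. Then $h$ is $1$-Lipschitz, $\epsilon/\rho$-semiconvex on $[0,\rho]$, and has total drop $\ge(1-\epsilon)\rho$. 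Convexity up to $\epsilon$ forces the drop on $[0,|xy|]$ to be at least $(1-C\epsilon)|xy|$ for every $|xy|\le\rho$, which yields the claimed almost isometry with $N=\#A$.
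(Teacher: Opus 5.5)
There is a genuine gap in your final step. You assert that once $\lambda\rho\le\epsilon$, the function $h=d(a_j,\gamma(\cdot))$ is $(\epsilon/\rho)$-semiconvex on $[0,\rho]$. The constant $\lambda$ of Definition \ref{dfn:conv} is dimensionless and scale-invariant: convexity of $t\mapsto|p\gamma(t)|^2+\lambda t^2$ is unchanged when the metric is rescaled, so passing to a smaller scale $\rho$ gains nothing. For the unsquared distance, the hypothesis only gives (Remark \ref{rem:sq}) $h''\ge-(\lambda+h'^2)/h$, a defect of order $(\lambda+1)/h$. Over a segment whose length is comparable to the distance from $a_j$, the accumulated defect is therefore of order $\lambda+1$, however small $\rho$ is. On your segment $[0,\rho]$ it is worse still: $h(\rho)\le\epsilon\rho$, so the segment runs into the region near $a_j$ where the constant blows up. Hence ``convexity up to $\epsilon$'' is not available, and the bound $(1-C\epsilon)|xy|$ is not established.

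You also abandoned your first attempt because of a sign slip. You need an upper bound on the average slope of $h$ over the short piece between $x$ and $y$. Semiconvexity bounds early average slopes above by later ones, which is exactly the right direction. The only real problem was the size of the error when you compare with a segment as long as the distance to the viewpoint.

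The missing idea is to separate scales, as the paper does. Keep the viewpoint at distance $R$, with $R\ge 3r$ in your annulus setup. Compare the slope on $[y,x]$ not with the average over the whole segment, but with the average over an intermediate piece of length $r'$ beyond $x$, where (net resolution) $\ll r'\ll R$. The semiconvexity error is then $O\bigl((\lambda+1)(|xy|+r')/R\bigr)$ and the net error is $O(\text{resolution}/r')$, so both are small.

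With your original $\epsilon r$-net of the annulus, take $r'=\sqrt{\epsilon}\,r$. This gives $|a_jy|-|a_jx|\ge(1-C\sqrt{\epsilon})|xy|$ for $|xy|\le\sqrt{\epsilon}\,r$. For $|xy|\ge\sqrt{\epsilon}\,r$, your naive bound $|xy|-2\epsilon r$ already suffices. So your first construction does work once the comparison interval is chosen correctly.

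The paper phrases the same estimate with angles. It takes $\delta\ll r'\ll r$, with $p_\alpha$ $\delta$-close to the point $w$ at distance $9r$, so the net makes $\tilde\angle p_\alpha xz$ small. Then \eqref{eq:comp'}, with error controlled by $r'/|p_\alpha x|$, together with Lemma \ref{lem:sum'} gives $\tilde\angle p_\alpha xy\ge\pi-\varkappa(\delta)$. Pairs with $|xy|\ge\delta$ are handled by a second, finer net and the triangle inequality.
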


\begin{proof}
The proof is along the same lines as that of \cite[Proposition 5.3]{LN19}.
Let $X$ be a locally semiconvex G-space and $B$ a normal ball of radius $r>0$. 
Let us denote by $10\bar B$ the concentric closed ball of radius $10r$.

Let $\delta>0$ be small enough, which will be determined later.
Take a finite $\delta$-net $\{p_\alpha\}_{\alpha=1}^N$ of $10\bar B$, i.e., the closed $\delta$-balls around $p_\alpha$ cover $10\bar B$.
We prove that the distance map
\[f:=(d(p_1,\cdot),\dots,d(p_N,\cdot)):B\to\mathbb R^N\]
is a locally $(1-\varkappa(\delta))$-bi-Lipschitz embedding with respect to the sup-norm of $\mathbb R^N$, where $\varkappa(\delta)$ is a positive function such that $\varkappa(\delta)\to0$ as $\delta\to 0$.
Note that $f$ is clearly $1$-Lipschitz.

In what follows, we abuse $\varkappa(\delta)$ to denote various functions satisfying the above property.
Let $x,y\in B$ be such that $|xy|\le\delta$.
Extend the shortest path $yx$ beyond $x$ and let $z,w\in 10\bar B$ be points on the extension such that $|xz|=r'$, $|xw|=9r$, respectively, where $\delta\ll r'\ll r$ will be determined later.
Since $\{p_\alpha\}_{\alpha=1}^N$ is a $\delta$-net, there exists $1\le \alpha\le N$ such that $p_\alpha$ is $\delta$-close to $w$.
See Figure \ref{fig:dim}.

\begin{figure}[ht]
\centering
\begin{tikzpicture}
\coordinate[label=above:$x$](x)at(0,0);
\coordinate[label=above:$y$](y)at(1,0);
\coordinate[label=above:$z$](z)at(-2,0);
\coordinate[label=above:$w$](w)at(-6,0);
\coordinate[label=below:$p_\alpha$](p)at(-6,-1);

\draw(w)to(z)to node[above]{$r'$}(x)to node[above]{$\delta$}(y);
\draw(p)to(x);
\draw[dashed](p)to node[left]{$\delta$}(w);
\draw[dotted](-5.9,0.5) .. controls (-4,1) and (-2,1) .. node[fill=white]{$9r$}(-0.1,0.5);

\fill(p)circle(1.5pt);
\fill(x)circle(1.5pt);
\fill(y)circle(1.5pt);
\fill(z)circle(1.5pt);
\fill(w)circle(1.5pt);

\pic[draw, angle radius=3mm] {angle = w--x--p};
\pic[draw, angle radius=3mm] {angle = p--x--y};
\end{tikzpicture}
\caption{}\label{fig:dim}
\end{figure}

Since $\tilde\angle wxz=0$, we have
\[\tilde\angle p_\alpha xz<\varkappa(\delta),\]
provided that $\delta$ is sufficiently small compared to $r'$.
On the other hand, the almost comparison inequality \eqref{eq:comp'} and Lemma \ref{lem:sum'} on adjacent angles imply
\[\tilde\angle p_\alpha xz+\tilde\angle p_\alpha xy\ge\angle p_\alpha xz+\angle p_\alpha xy-\varkappa(\delta)\ge\pi-\varkappa(\delta),\]
provided that $r'/r$ is sufficiently small compared to the local semiconvexity constant on $10\bar B$.
Combining the above two inequalities, we obtain
\[\tilde\angle p_\alpha xy\ge\pi-\varkappa(\delta).\]
Since $|xy|\le\delta\ll r$, this implies
\[|p_\alpha y|-|p_\alpha x|\ge (1-\varkappa(\delta))|xy|.\]
Therefore, $f$ is locally $(1-\varkappa(\delta))$-bi-Lipschitz with respect to the sup-norm.

Finally, we modify $f$ to a global $(1-\varkappa(\delta))$-bi-Lipschitz embedding.
Take a finite $\delta'$-net $\{q_\beta\}_{\beta=1}^M$ in $\bar B$, where $0<\delta'\ll\delta$, and consider the distance map
\[g:=(d(q_1,\cdot),\dots,d(q_M,\cdot)):B\to\mathbb R^M.\]
Then the triangle inequality shows that $g$ satisfies the $(1-\varkappa(\delta'/\delta))$-bi-Lipschitz property for any $x,y\in B$ with $|xy|\ge\delta$.
Therefore the map $(f,g):B\to\mathbb R^{N+M}$ gives a global almost isometric embedding.
This completes the proof.
\end{proof}

\begin{rem}\label{rem:dim1}
The above proof does not rely on the non-branching property of a G-space, or in other words, the uniqueness of the extension of a shortest path.
In particular, it applies to GNPC spaces discussed in \cite{FG25}: any tiny ball in a GNPC space admits an almost isometric embedding into Euclidean space with sup-norm (here a ``tiny ball'' plays the same role as a normal ball in this paper).
This is a natural extension of \cite[Proposition 5.3]{LN19}.
Compare also with \cite[Proposition 3.1, Remark 3.4]{FG25}.
\end{rem}

\begin{rem}\label{rem:dim2}
Instead of the local semiconvexity, if we assume a qualitative condition that every small metric ball is convex, then the same construction as above gives a \emph{topological} embedding into Euclidean space; see \cite{Be77, BHR11}.
\end{rem}

Now we are in a position to prove Theorem \ref{thm:top}.

\begin{proof}[Proof of Theorem \ref{thm:top}]
Let $X$ be a locally semiconvex G-space and $B$ a normal ball.
By the topological homogeneity of $X$, it suffices to find a manifold point in $B$.

By Lemma \ref{lem:dim}, $B$ has finite Hausdorff dimension, say at most $N$.
Let
\[\delta:=\min_{1\le k\le N+1}\delta_k,\]
where $\delta_k$ is a constant of Proposition \ref{prop:open'} depending only on $k$.
By Proposition \ref{prop:open'}, if $B$ contains a $(k,\delta)$-strained point, where $1\le k\le N+1$, then the Hausdorff dimension of $B$ is at least $k$.
Hence there exists a maximal strained point $x$ in $B$, that is, $x$ is an $(n,\delta)$-strained point and there are no $(n+1,\delta)$-strained points in $B$, where $1\le n\le N$.

Let $f:U\to\mathbb R^n$ be an $(n,\delta)$-strainer map around $x$.
By Lemma \ref{lem:bilip'} and the above choice of $n$,  we have for any $y\in U$,
\begin{equation}\label{eq:liminf}
\liminf_{z\to y}\frac{|f(y)-f(z)|}{|yz|}>\delta',
\end{equation}
where $\delta'$ is a positive constant depending only on $\delta$.

Now, applying \cite[Proposition 1.1]{Ly05}, we see that there exists an open (dense) subset of $U$ on which $f$ is a locally bi-Lipschitz embedding.
Since $f$ is open by Proposition \ref{prop:open'}, this gives a manifold point.
For the convenience of the reader, we recall an outline of \cite[Proposition 1.1]{Ly05} below.

For any positive integer $m$, we define a closed subset $U_m$ of $U$ by
\[U_m:=\{y\in U\mid |yz|<1/m,\ z\in U \implies|f(y)-f(z)|\ge\delta'|yz|.\}\]
By the inequality \eqref{eq:liminf}, $\{U_m\}_m$ covers $U$.
Clearly, $f$ is locally bi-Lipschitz on each $U_m$.
Consider a countable covering of $U$ by locally closed subsets $V_m:=U_m\setminus U_{m-1}$.
Then the Baire category theorem implies that the union of the interior of $V_m$ is dense in $U$ (see \cite[Lemma 2.1]{Ly05}).
This completes the proof.
\end{proof}

\begin{rem}\label{rem:gnpc}
In \cite[Theorem 5.30]{FG25}, the authors proved that any maximal strainer map on a GNPC space is an (open) bi-Lipschitz embedding, despite the lack of the uniform convergence of angle, using the local contractibility of fibers.
See also \cite[Remark 5.32]{FG25}.
\end{rem}

\section{Problems}\label{sec:prob}

We conclude this paper by proposing some open problems.

The first question concerns the BV property of our Finsler metric.
Recall that in the case of Alexandrov or locally CAT G-spaces, the components of the Riemannian metric have locally bounded variation; see \cite{Per94, LN19}.

\begin{ques}
Do the Finsler metrics of Theorems \ref{thm:conc} and \ref{thm:conv} have any bounded variation property?
\end{ques}

The second question deals with the second differentiable structure.
See \cite{Per94, Ot95, Ot97, LN19} for the Alexandrov/locally CAT case.

\begin{ques}
Let $X$ be a locally semiconcave G-space or a locally semiconvex, locally differentiable G-space as in Theorems \ref{thm:conc} and \ref{thm:conv}.
Does $X$ admit any second differentiable structure?
\end{ques}

The third question concerns the construction of a Finsler metric without local differentiability.

\begin{ques}\label{ques:conv}
Let $X$ be a locally semiconvex G-space as in Theorem \ref{thm:top} (not necessarily locally differentiable).
Does $X$ admit a continuous Finsler metric (almost everywhere in some sense)?
\end{ques}

On the other hand, as mentioned in Section \ref{sec:rel}, in view of the generalization of Alexandrov/locally CAT G-spaces, it is natural to add the Busemann concavity/convexity to our setting.
Recall Figures \ref{fig:alex} and \ref{fig:cat}.
From this point of view, we ask the following question.

\begin{ques}
Let $X$ be a locally semiconcave, locally Busemann concave G-space or a (locally differentiable) locally Busemann convex G-space.
Can Theorems \ref{thm:conc} and \ref{thm:conv} (as well as the answers to the above questions, if any) be improved in these cases?
\end{ques}

For example, Andreev \cite{An17nor} proved that every tangent cone of a locally Busemann convex G-space (without differentiability) is a finite-dimensional strictly convex normed space.
Some further results may be discussed in \cite{Fgeo}.
See also \cite{Ke19, HY25} for Busemann concave spaces.

Finally, we discuss the topological problems on G-spaces.
As mentioned in Section \ref{sec:rel}, it is a long-standing problem whether a G-space is a topological manifold.
Recall that, by the topological homogeneity of a G-space, it suffices to find at least one manifold point.

Berestovskii \cite{Be77} (cf.\ \cite{BHR11}) proved that any G-space such that every small metric ball is convex is finite-dimensional (see also Remark \ref{rem:dim2}).
The convexity of metric balls is considered as a qualitative generalization of the convexity of the distance function to a point.
Therefore, as a partial qualitative generalization of Theorem \ref{thm:top}, it is natural to ask the following.

\begin{prob}\label{prob:ball}
Let $X$ be a G-space such that every small metric ball is convex.
Show that $X$ is a topological manifold.
\end{prob}

As a special case, we can consider the following global problem.
Recall that a G-space $X$ is called \emph{straight} if one can take $U=X$ in Definition \ref{dfn:g}, i.e., every shortest path admits a unique extension to a shortest line.

\begin{prob}\label{prob:ball'}
Let $X$ be a straight G-space such that every metric ball is convex.
Show that $X$ is homeomorphic to Euclidean space.
\end{prob}

Here it is still sufficient to find one manifold point (\cite{Br61}; see \cite[Proof of Theorem 1.6]{FG25} for more details).
Several equivalent conditions for the convexity of metric balls in the case of straight G-spaces can be found in \cite[Theorem 20.9]{Bu55}.
In fact, Busemann \cite[Theorem 24.6]{Bu55} proved that under the additional assumptions of the parallel axiom and the differentiability of metric spheres, $X$ is isometric to a finite-dimensional normed space (for dimension $\ge 3$).
See also \cite{BP79} for another result in this direction.

Note that there is a G-space whose metric ball is not convex; see \cite{BP80}.
See also \cite{Fo04} for comparison of different notions of convexity in metric spaces.

\printbibliography

\end{document}